\numberwithin{equation}{section}
\theoremstyle{plain}
\newtheorem{Th}{Theorem}[section]
\newtheorem{Lemma}[Th]{Lemma}
\newtheorem{Cor}[Th]{Corollary}
\newtheorem{Prop}[Th]{Proposition}
 \theoremstyle{definition}
\newtheorem{Def}[Th]{Definition}
\newtheorem{Conj}[Th]{Conjecture}
\newtheorem{Rem}[Th]{Remark}
\newtheorem{?}[Th]{Problem}
\newtheorem{Ex}[Th]{Example}
\newcommand{\G}{\mathbb{G}}
\newcommand{\prm}{\mathrm{pm}}
\begin{document}

\title[New proof of Schrijver's and Gurvits's theorems]{Lower matching conjecture, and a new proof of Schrijver's and Gurvits's theorems}

\author[P. Csikv\'ari]{P\'{e}ter Csikv\'{a}ri}

\address{Massachusetts Institute of Technology \\ Department of Mathematics \\
Cambridge MA 02139 \&  E\"{o}tv\"{o}s Lor\'{a}nd University \\ Department of Computer 
Science \\ H-1117 Budapest
\\ P\'{a}zm\'{a}ny P\'{e}ter s\'{e}t\'{a}ny 1/C \\ Hungary} 

\email{peter.csikvari@gmail.com}

\thanks{The author is partially supported by National Science Foundation under grant no. DMS-1500219,
the Hungarian National Research, Development and Innovation Office, NKFIH grant K109684 and NN114614, a Slovenian--Hungarian grant, and by the MTA R\'enyi ''Lend\"ulet'' Groups and Graphs Research Group, and by the ERC Consolidator Grant 648017.}

 \subjclass[2010]{Primary: 05C35. Secondary: 05C31, 05C70, 05C80}

 \keywords{Matchings, matching polynomial, Benjamini-Schramm convergence, infinite regular tree, infinite biregular tree, 2-lift} 

\begin{abstract} Friedland's Lower Matching Conjecture asserts that if $G$ is a $d$--regular bipartite graph on $v(G)=2n$ vertices, and $m_k(G)$ denotes the number of matchings of size $k$, then
$$m_k(G)\geq {n \choose k}^2\left(\frac{d-p}{d}\right)^{n(d-p)}(dp)^{np},$$
where $p=\frac{k}{n}$. When $p=1$, this conjecture reduces to a theorem of Schrijver which says that a $d$--regular bipartite graph on $v(G)=2n$ vertices has at least 
$$\left(\frac{(d-1)^{d-1}}{d^{d-2}}\right)^n$$
perfect matchings. L. Gurvits proved an asymptotic version of the Lower Matching Conjecture, namely he proved that
$$\frac{\ln m_k(G)}{v(G)}\geq \frac{1}{2}\left(p\ln \left(\frac{d}{p}\right)+(d-p)\ln \left(1-\frac{p}{d}\right)-2(1-p)\ln (1-p)\right)+o_{v(G)}(1).$$

In this paper, we prove the Lower Matching Conjecture. In fact, we will prove a slightly stronger statement which gives an extra  $c_p\sqrt{n}$ factor compared to the conjecture if $p$ is separated away from $0$ and $1$, and is tight up to a constant factor if $p$ is separated away from $1$. We will also give a new proof of Gurvits's and Schrijver's theorems, and we  extend these theorems  to $(a,b)$--biregular bipartite graphs.  
\end{abstract}

\maketitle

\section{Introduction} Throughout this paper we use standard terminology, but the second paragraph of Section 2 might help the Reader 
in case of a concept undefined in the Introduction.
\medskip

One of the best known theorem concerning the number of perfect matchings of a $d$--regular graph is due to A. Schrijver and M. Voorhoeve.

\begin{Th}[A. Schrijver \cite{sch1} for general $d$, M. Voorhoeve \cite{vor} for $d=3$] \label{Schrijver} Let $G$ be a $d$--regular bipartite graph on $2n$ vertices and let $\prm(G)$ denote the number of perfect matchings of $G$. Then
$$\prm(G)\geq \left(\frac{(d-1)^{d-1}}{d^{d-2}}\right)^n.$$
\end{Th}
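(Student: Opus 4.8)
The plan is to recast the perfect-matching count analytically and then run an averaging argument over random $2$-lifts. Writing $\mu(G,x)=\sum_{k}(-1)^k m_k(G)x^{v(G)-2k}$ for the matching polynomial, whose roots are real and symmetric about $0$, and letting $\rho_G$ be the \emph{matching measure} (the uniform probability measure on these $v(G)$ roots), I would first record the identity
\[
\frac{\ln \prm(G)}{v(G)}=\int \ln|x|\,d\rho_G(x),
\]
valid whenever $v(G)$ is even, which follows by comparing the constant term $\pm\prm(G)$ of $\mu(G,x)$ with the product of its roots. Thus the theorem becomes the clean inequality $\int\ln|x|\,d\rho_G\ge \tfrac12\ln\frac{(d-1)^{d-1}}{d^{d-2}}=:c_d$ for every $d$-regular bipartite $G$, and a preliminary computation identifies $c_d=\int\ln|x|\,d\rho_{T_d}$, where $\rho_{T_d}$ is the Kesten--McKay measure of the $d$-regular tree (the matching measure of a forest being its spectral measure). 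So $c_d$ is exactly the ``tree value'', and the task is to show the tree minimizes this functional.

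The engine is a \textbf{2-lift inequality}. For a random $2$-lift $\hat G$ of $G$ (put an i.i.d.\ uniform $\pm1$ sign on each edge and lift parallel or crossed accordingly), I would show $\mathbb{E}[\prm(\hat G)]\le \prm(G)^2$. The mechanism is that ordered pairs of perfect matchings of $G$ superimpose into disjoint unions of doubled edges and even cycles, and these index the $\prm(G)^2$ terms; a perfect matching of $\hat G$ projects to such a configuration, but a genuine even cycle lifts to a matching of $\hat G$ for only a strict fraction of the signings, so the cycle terms are strictly down-weighted while the doubled-edge terms match exactly. Concavity of the logarithm then upgrades this, via $\mathbb{E}[\ln\prm(\hat G)]\le\ln\mathbb{E}[\prm(\hat G)]\le 2\ln\prm(G)$, to $\mathbb{E}[f(\hat G)]\le f(G)$, where $f(G):=\ln\prm(G)/v(G)$ and $v(\hat G)=2v(G)$. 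In words, the normalized log-count does not increase in expectation under a random $2$-lift.

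With these two facts I would iterate. Let $G=G_0,G_1,G_2,\dots$ be a tower of independent random $2$-lifts; since $d$-regular bipartite graphs always have perfect matchings (Hall/K\"{o}nig), each $f(G_i)\in[0,\ln(2\sqrt{d-1})]$ by Heilmann--Lieb, and the $2$-lift inequality makes $\mathbb{E}[f(G_i)]$ non-increasing in $i$. A first-moment computation shows the expected number of cycles of any fixed length grows only polynomially in $i$ while $v(G_i)=2^iv(G)$ grows exponentially, so the short-cycle density tends to $0$ and the tower Benjamini--Schramm converges almost surely to $T_d$, whence $\rho_{G_i}\to\rho_{T_d}$ weakly. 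Granting $f(G_i)\to c_d$ almost surely, bounded convergence gives $\mathbb{E}[f(G_i)]\to c_d$, and the monotonicity yields $f(G)\ge\lim_i\mathbb{E}[f(G_i)]=c_d$, which is the theorem.

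The main obstacle is precisely the convergence $f(G_i)\to c_d$, i.e.\ the continuity of $\rho\mapsto\int\ln|x|\,d\rho$ along this sequence. Weak convergence together with the uniform bound $\ln|x|\le\ln(2\sqrt{d-1})$ yields $\limsup_i f(G_i)\le c_d$ for free, but the reverse inequality is delicate because $\ln|x|$ has a $-\infty$ singularity at the origin: I must rule out matching roots accumulating at $0$, i.e.\ prove a uniform bound on the number of near-zero roots (no escape of mass to $0$). This ``no escaping mass'' estimate, controlling the density of small matching roots under Benjamini--Schramm convergence, is the genuinely hard analytic input and is where I expect the real effort to lie; the remaining steps are soft. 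The same scheme, with $\prm$ replaced by the full matching-generating function and $T_d$ by the biregular tree, should then also deliver Gurvits's theorem and the $(a,b)$-biregular extensions.
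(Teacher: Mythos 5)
Your skeleton---$2$-lifts do not increase the normalized log of the matching count, iterate until the sequence Benjamini--Schramm converges to $\mathbb{T}_d$, identify the tree value with $c_d=\frac12\ln\frac{(d-1)^{d-1}}{d^{d-2}}$---is exactly the paper's method, and your $2$-lift inequality is essentially the paper's Lemma~\ref{mainlemma} (the paper proves the stronger deterministic statement $m_k(H)\le m_k(G\cup G)$ for \emph{every} $2$-lift $H$, and obtains girth $\to\infty$ by Linial's first-moment derandomization rather than a random tower; these are cosmetic differences). But the step you flag as ``the genuinely hard analytic input'' is a genuine gap, not a deferred technicality, and it is precisely the obstruction that the paper's entropy-function machinery exists to circumvent. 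Weak convergence $\rho_{G_i}\to\rho_{\mathbb{T}_d}$ together with the upper bound $\ln|x|\le\ln(2\sqrt{d-1})$ gives $\limsup_i f(G_i)\le c_d$, which is the useless direction: your monotonicity chain $f(G)\ge\mathbb{E}[f(G_i)]$ requires $\liminf_i f(G_i)\ge c_d$, i.e.\ a quantitative bound preventing matching roots from accumulating at the origin along the lift tower. You supply no such bound, and the natural candidates for one (a lower bound on $\prm(G_i)$ of the correct exponential order) are circular, since that \emph{is} Schrijver's theorem applied to $G_i$. As stated, your argument proves only $f(G)\ge\lim_i\mathbb{E}[f(G_i)]\ge 0$.

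The paper never integrates $\ln|x|$ against $\rho_G$ and so never meets this singularity. It works with the bounded continuous observables $\frac12\ln(1+tz^2)$ and $\frac{tz^2}{1+tz^2}$ at finite activity $t$, which produce $\frac{\ln M(G,t)}{v(G)}$ and the density $p(G,t)$; these pass to the Benjamini--Schramm limit with no escape-of-mass issue and yield $\lambda_G(p)\ge\lambda_{\mathbb{T}_d}(p)$ for all $p<1$. The perfect-matching statement is then recovered \emph{for each fixed finite $G$} by letting $p\nearrow 1$, using that $\lambda_G$ is decreasing near $p^*$ and that $\lambda_G(1)=\frac{\ln\prm(G)}{v(G)}$, the latter via Darroch's rule and Heilmann--Lieb (Proposition~\ref{asymp}, parts (c) and (e)). In other words, the limit $t\to\infty$ (equivalently $p\to1$) is taken only after the comparison with the tree, where it costs nothing. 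To repair your proof you must either establish the ``no mass near $0$'' estimate independently or reroute through bounded test functions as above; the second option is the paper, and I know of no noncircular route to the first.
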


There are two different proofs of Theorem~\ref{Schrijver}. The original one is due to A. Schrijver \cite{sch1}, and another proof using stable polynomials is due to L. Gurvits \cite{gur}, for a beautiful account to this proof see \cite{L-S}.
In this paper we will give a third proof of this theorem which is essentially different from the previous ones.

In \cite{FKM}, S. Friedland, E. Krop and K. Markstr\"om, conjectured a possible generalization of this theorem which extends Schrijver's theorem to any size of matchings. This conjecture became known as Friedland's Lower Matching Conjecture:

\begin{Conj}[Friedland's Lower Matching Conjecture \cite{FKM}] \label{LMC} Let $G$ be a $d$--regular bipartite graph on $v(G)=2n$ vertices, and let $m_k(G)$ denote the number of matchings of size $k$, then
$$m_k(G)\geq {n \choose k}^2\left(\frac{d-p}{d}\right)^{n(d-p)}(dp)^{np},$$
where $p=\frac{k}{n}$.
\end{Conj}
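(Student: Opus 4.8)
The plan is to pass from the combinatorial quantity $m_k(G)$ to the analytic behavior of the matching generating polynomial $M_G(t)=\sum_k m_k(G)t^k$, and to exploit that the extremal configuration is the infinite $d$--regular tree $T_d$, approached in the Benjamini--Schramm sense by iterated $2$-lifts. Writing the roots of the matching polynomial $\mu(G,x)=\sum_k(-1)^k m_k(G)x^{v(G)-2k}$ as $\lambda_1,\dots,\lambda_{v(G)}$ (all real by Heilmann--Lieb, and symmetric about $0$), I would introduce the matching measure $\rho_G=\frac{1}{v(G)}\sum_i\delta_{\lambda_i}$ and record the identity
$$\frac{1}{v(G)}\ln M_G(t)=\frac12\int \ln\bigl(1+tx^2\bigr)\,d\rho_G(x),$$
which turns the generating function into a continuous functional of $\rho_G$ for each fixed $t>0$.

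First I would establish (or invoke) that $\rho_G$ depends continuously on $G$ in the Benjamini--Schramm topology, so that along any sequence of $d$--regular bipartite graphs converging to $T_d$ one has $\frac{1}{v(G)}\ln M_G(t)\to\frac12\int\ln(1+tx^2)\,d\rho_{T_d}(x)$, where $\rho_{T_d}$ is the Kesten--McKay measure. Evaluating this integral explicitly and extracting the coefficient of $t^{pn}$ by a Legendre transform recovers exactly Gurvits's asymptotic bound, and the $p=1$ specialization recovers Schrijver's constant; this already yields the new proofs promised in the abstract, and the same computation with the biregular Kesten--McKay measure handles the $(a,b)$--biregular extension.

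The core of the finite, exact statement is a monotonicity of the normalized functional under $2$-lifts: I would prove that if $\tilde G$ is any $2$-lift of $G$ then the per-vertex quantity $\frac{1}{v(G)}\ln\bigl(m_k(G)/\binom{n}{k}^2\bigr)$ does not increase, with the tree value attained in the limit. Since random $2$-lifts converge Benjamini--Schramm to $T_d$, chaining this inequality from $G$ down to $T_d$ forces every finite $G$ to lie above the tree value, which is precisely the right-hand side of Conjecture~\ref{LMC}. The appearance of the binomial factor $\binom{n}{k}^2$ is natural in this framework because a $2$-lift sends $(n,k)\mapsto(2n,2k)$, and the ratio $\binom{2n}{2k}^2/\binom{n}{k}^4$ is governed by a central-binomial term of order $\sqrt n$.

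Finally, to obtain the exact inequality together with the extra $c_p\sqrt n$ improvement I would extract $m_{pn}(G)$ from $M_G(t)$ through the contour integral $m_k(G)=\frac{1}{2\pi i}\oint t^{-k-1}M_G(t)\,dt$, evaluated at the saddle $t^\ast$ solving $t^\ast M_G'(t^\ast)/M_G(t^\ast)=k$. Real-rootedness of $M_G$ gives log-concavity of its coefficients and hence a genuine Gaussian second-order term, whose $1/\sqrt{\,\cdot\,}$ prefactor accounts for the gained $\sqrt n$ when $p$ is bounded away from $0$ and $1$. I expect the main obstacle to be the $2$-lift monotonicity step: proving it as an honest inequality valid at every single lift, rather than merely asymptotically, is exactly what upgrades Gurvits's limit theorem to the sharp bound of the conjecture, and controlling the saddle-point error uniformly in $G$ so that the factor $\binom{n}{k}^2$ emerges cleanly is the delicate accompanying estimate.
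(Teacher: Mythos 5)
Your overall architecture --- matching measure, Benjamini--Schramm convergence to $\mathbb{T}_d$, monotonicity under $2$-lifts, and coefficient extraction via real-rootedness --- is the paper's, but the two steps you yourself flag as delicate are exactly where your sketch has genuine gaps. First, the quantity you propose to be monotone under $2$-lifts, $\frac{1}{v(G)}\ln\bigl(m_k(G)/\binom{n}{k}^2\bigr)$, is not what the $2$-lift argument directly controls. What one can prove combinatorially is the coefficientwise bound $m_j(H)\le m_j(G\cup G)$ for a $2$-lift $H$ of a bipartite $G$ (project a matching of the lift to $G$ and count preimages component by component), and $m_{2k}(G\cup G)=\sum_{i+j=2k}m_i(G)m_j(G)$ is in general much larger than $m_k(G)^2$; your per-coefficient monotonicity would additionally require a Newton/ultra-log-concavity argument to dominate this convolution by its central term, which you do not supply. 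The paper sidesteps this entirely by taking the monotone object to be the free energy $\frac{1}{v(G)}\ln M(G,t)$ at each fixed activity $t\ge 0$ (which follows from the $2$-lift lemma just by summing over $j$), equivalently the entropy function $\lambda_G(p)$, and only returns to individual coefficients at the very end.

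Second, your final step --- a saddle-point extraction of $m_k$ from $M(G,t)$ with ``uniform error control'' --- is precisely where the exact constant of the conjecture lives, and you leave it unresolved; log-concavity alone gives an asymptotic Gaussian profile, not a clean non-asymptotic inequality. The paper's missing ingredient here is Hoeffding's theorem: choosing $t$ with $p(G,t)=k/n$, the normalized coefficients $a_j=m_j(G)t^j/M(G,t)$ form, by Heilmann--Lieb real-rootedness, the distribution of the number of successes in $n$ independent Bernoulli trials with mean exactly $k$, and Hoeffding's inequality then gives the exact bound $a_k\ge\binom{n}{k}p^k(1-p)^{n-k}=p_\mu$. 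Combined with $\lambda_G(p)\ge\mathbb{G}_d(p)$ this yields $m_k(G)\ge p_\mu\exp(2n\,\mathbb{G}_d(p))$, which is stronger than the conjectured $p_\mu^2\exp(2n\,\mathbb{G}_d(p))$ since $p_\mu\le 1$. Without a concrete replacement for this step, your outline reproduces Gurvits's asymptotic statement but not the finite inequality; and your heuristic that the factor $\binom{n}{k}^2$ ``emerges from $(n,k)\mapsto(2n,2k)$'' is not where it actually comes from --- it comes from rewriting the conjectured bound as $p_\mu^2\exp(2n\,\mathbb{G}_d(p))$ and bounding the single coefficient $a_k$ from below.
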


They also proposed an asymptotic version of this conjecture which was later proved by L. Gurvits in \cite{gur2}.

\begin{Th}[L. Gurvits \cite{gur2}] \label{Gurvits} Let $G$ be a $d$--regular bipartite graph on $v(G)=2n$ vertices,  and let $m_k(G)$ denote the number of matchings of size $k$, then
$$\frac{\ln m_k(G)}{v(G)}\geq \frac{1}{2}\left(p\ln \left(\frac{d}{p}\right)+(d-p)\ln \left(1-\frac{p}{d}\right)-2(1-p)\ln (1-p)\right)+o_{v(G)}(1),$$
where $p=\frac{k}{n}$.
\end{Th}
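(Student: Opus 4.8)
The plan is to pass from the single coefficient $m_k(G)$ to the whole matching generating polynomial $M_G(t)=\sum_j m_j(G)t^j$, to control the latter by a quantity attached to the infinite $d$--regular tree $\mathbb{T}_d$, and finally to recover $m_k(G)$ by a saddle--point argument. First I would record the classical fact (Heilmann--Lieb) that the matching polynomial $\mu(G,x)=\sum_j(-1)^jm_j(G)x^{v(G)-2j}$ has only real roots, which for a bipartite graph come in pairs $\pm\theta_1,\dots,\pm\theta_{v(G)/2}$. The substitution $x=it^{-1/2}$ then gives the product formula
\[
M_G(t)=\sum_j m_j(G)t^j=\prod_{l}\bigl(1+\theta_l^2 t\bigr),
\]
so that, writing $\rho_G$ for the matching measure (the uniform probability measure on the $v(G)$ roots of $\mu(G,\cdot)$),
\[
\frac{1}{v(G)}\ln M_G(t)=\frac12\int \ln\bigl(1+x^2 t\bigr)\,d\rho_G(x),\qquad t>0.
\]
In particular $\frac{1}{v(G)}\ln M_G(t)$ depends only on $\rho_G$, and Newton's inequalities applied to the real--rooted $M_G$ show that the sequence $(m_j(G))_j$ is log--concave.

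The heart of the argument is an \emph{extremality} statement: for every finite $d$--regular bipartite graph $G$ and every $t>0$,
\[
\int \ln\bigl(1+x^2 t\bigr)\,d\rho_G(x)\ \ge\ \int \ln\bigl(1+x^2 t\bigr)\,d\rho_{\mathbb{T}_d}(x),
\]
where $\rho_{\mathbb{T}_d}$ is the (compactly supported, well--defined) matching measure of the $d$--regular tree. I would prove this through $2$--lifts: for a suitable signing the matching measure of a $2$--lift $\tilde G$ is comparable to that of $G$, and the functional $G\mapsto \frac{1}{v(G)}\ln M_G(t)$ is monotone under passing to $2$--lifts, with the tree value as its infimum. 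Since a sequence of iterated random $2$--lifts of $G$ has girth tending to infinity and therefore Benjamini--Schramm converges to $\mathbb{T}_d$, and since weak convergence of the matching measures (which holds along Benjamini--Schramm convergent sequences) lets us pass the continuous integrand $\ln(1+x^2t)$ to the limit, the tree value is exactly the infimum over all finite $d$--regular bipartite graphs. This monotonicity is the step I expect to be the main obstacle, since it is precisely where the combinatorial structure rather than just real--rootedness enters; note that the case $p=1$ of the final bound is Schrijver's Theorem~\ref{Schrijver}, so this extremality is a genuine strengthening and cannot follow from soft arguments alone.

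Granting the extremality, I would fix $p=k/n\in(0,1)$ and choose the tilting parameter $t^\ast>0$ by the saddle--point equation
\[
\int \frac{x^2 t^\ast}{1+x^2 t^\ast}\,d\rho_{\mathbb{T}_d}(x)=p,
\]
which makes $j=k$ the asymptotic mode of the log--concave sequence $j\mapsto m_j(G)(t^\ast)^j$. Log--concavity then guarantees that this maximal term is within a factor $v(G)+1$ of the whole sum $M_G(t^\ast)$, so
\[
\frac{1}{v(G)}\ln m_k(G)\ \ge\ \frac{1}{v(G)}\ln M_G(t^\ast)-\frac{k}{v(G)}\ln t^\ast-o_{v(G)}(1)\ \ge\ \frac12\Bigl(\int \ln\bigl(1+x^2t^\ast\bigr)\,d\rho_{\mathbb{T}_d}(x)-p\ln t^\ast\Bigr)-o_{v(G)}(1).
\]
It remains to evaluate the right--hand side, which is half of the Legendre transform in $\ln t$ of the monomer--dimer free energy $g(t)=\int\ln(1+x^2t)\,d\rho_{\mathbb{T}_d}(x)$ of the $d$--regular tree. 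Computing $g$ and its transform explicitly from the known matching measure of $\mathbb{T}_d$ should reproduce precisely
\[
\frac12\Bigl(p\ln\tfrac{d}{p}+(d-p)\ln\bigl(1-\tfrac{p}{d}\bigr)-2(1-p)\ln(1-p)\Bigr),
\]
which finishes the proof; the same scheme with the biregular tree $\mathbb{T}_{a,b}$ in place of $\mathbb{T}_d$ would yield the announced $(a,b)$--biregular extension.
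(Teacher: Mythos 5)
Your overall architecture is exactly the paper's: real-rootedness of $\mu(G,x)$ gives the product formula and the matching-measure representation of $\frac{1}{v(G)}\ln M(G,t)$; the extremality statement $\int\ln(1+x^2t)\,d\rho_G\geq\int\ln(1+x^2t)\,d\rho_{\mathbb{T}_d}$ is the paper's Theorem~\ref{integral-inequality}; and it is proved there precisely as you propose, via monotonicity under $2$-lifts plus Benjamini--Schramm convergence of iterated lifts of increasing girth to $\mathbb{T}_d$. The step you flag as the main obstacle is indeed the one combinatorial lemma of the whole proof (Lemma~\ref{mainlemma}: for \emph{every} $2$-lift $H$ of a bipartite $G$ one has $m_k(H)\leq m_k(G\cup G)$, shown by projecting a matching of $H$ down to $G$ and counting preimages componentwise). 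You assert the correct statement but do not prove it; note it is not about a ``suitable signing'' --- the inequality holds for every $2$-lift.

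The genuine error is in your coefficient-extraction step. Choosing $t^\ast$ by the tree's saddle-point equation does \emph{not} make $j=k$ the mode of $j\mapsto m_j(G)(t^\ast)^j$: the mode sits near $\frac{v(G)}{2}\,p(G,t^\ast)$, where $p(G,t)=\frac{2tM'(G,t)}{v(G)M(G,t)}$ is the graph's own density function, and $p(G,t^\ast)$ generically differs from $p(\mathbb{T}_d,t^\ast)=p$ by a constant. (For $d=2$ take $G=rC_4$ and $t=1$: $p(C_4,1)=\tfrac{4}{7}$ while $p(\mathbb{T}_2,1)=1-\tfrac{1}{\sqrt5}$.) When these differ, $m_k(G)(t^\ast)^k$ is smaller than $M(G,t^\ast)$ by a factor exponential in $v(G)$, so your first displayed inequality fails by a constant in the exponent, not by $o_{v(G)}(1)$. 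The repair is exactly the paper's route: tilt at the graph's \emph{own} saddle point $t(G,p)$ defined by $p(G,t(G,p))=p$; Darroch's rule (a sharpening of your log-concavity argument) then gives $m_k(G)\,t(G,p)^k\geq M(G,t(G,p))/v(G)$, hence
\[
\frac{\ln m_k(G)}{v(G)}\ \geq\ \lambda_G(p)-\frac{\ln v(G)}{v(G)},\qquad
\lambda_G(p)=\min_{t>0}\Bigl(\frac{\ln M(G,t)}{v(G)}-\frac{p}{2}\ln t\Bigr),
\]
and since your pointwise extremality inequality survives taking the minimum over $t$ on both sides, $\lambda_G(p)\geq\lambda_{\mathbb{T}_d}(p)=\G_d(p)$ (this is part (g) of Proposition~\ref{asymp} together with Theorem~\ref{tree}), which gives the theorem. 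Your Legendre-transform evaluation of the tree quantity is fine and matches Theorem~\ref{tree}(d).
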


When $p=1$ this result almost reduces to Schrijver's theorem, but Gurvits used this special case to establish the general case. More precisely, Gurvits used the following result of Schrijver: let $A=(a_{ij})$ be a doubly stochastic matrix, and $\tilde{A}=(\tilde{a}_{ij})$, where $\tilde{a}_{ij}=a_{ij}(1-a_{ij})$, then the permanent of $\tilde{A}$ satisfies the inequality
$$\mathrm{Per}(\tilde{A})\geq \prod_{i,j}(1-a_{ij}).$$
Note that Gurvits \cite{gur2} proved an effective version of Theorem~\ref{Gurvits}, but for our purposes any $o_{v(G)}(1)$ term would suffice, as we will "vanish" it. More details on Gurvits's results can be found at Remark~\ref{rem: Gurvits}. 

It is worth introducing some notation for the function appearing in Theorem~\ref{Gurvits}, and with some foresight we introduce another function with parameters $a,b$ which will be important for us when we study $(a,b)$--biregular graphs.

\begin{Def} Let $0\leq q\leq 1$ and
$$H(q)=-(q\ln (q)+(1-q)\ln (1-q))$$
with the usual convention that $H(0)=H(1)=0$.
Furthermore, for a positive integer $d$ and $0\leq p\leq 1$ let
$$\G_d(p)=\frac{1}{2}\left(p\ln \left(\frac{d}{p}\right)+(d-p)\ln \left(1-\frac{p}{d}\right)-2(1-p)\ln (1-p)\right),$$
and for positive integers $a$ and $b$, let
$$\G_{a,b}(p)=\frac{a}{a+b}H\left(\frac{a+b}{2a}p\right)+\frac{b}{a+b}H\left(\frac{a+b}{2b}p\right)+\frac{1}{2}p\ln (ab)-\frac{ab}{a+b}H\left(\frac{a+b}{2ab}p\right),$$
where $0\leq p\leq \min(\frac{2a}{a+b},\frac{2b}{a+b})$.
\end{Def}

Note that one can rewrite $\G_{a,b}(p)$ as follows:
$$\G_{a,b}(p)=\frac{1}{2}\left(p \cdot\ln \left( \frac{2ab}{(a+b)p}\right)+\left(\frac{2ab}{a+b}-p\right)\cdot \ln \left(1-\frac{a+b}{2ab}p\right)-\right.$$
$$-\left. \left(\frac{2a}{a+b}-p\right)\cdot \ln \left(1-\frac{a+b}{2a}p\right)-
\left(\frac{2b}{a+b}-p\right)\cdot \ln \left(1-\frac{a+b}{2b}p\right)\right).$$
From this form it is clear that for $a=b=d$, we have $\G_d(p)=\G_{a,b}(p)$. Later it will turn out that $\G_d(p)$ is the so-called entropy function of the infinite 
$d$--regular tree $\mathbb{T}_d$, and $\G_{a,b}(p)$ is the entropy function of the infinite $(a,b)$--biregular tree $\mathbb{T}_{a,b}$.
\bigskip

To show the connection between Conjecture~\ref{LMC} and Theorem~\ref{Gurvits}, let us introduce one more parameter. Let $p=\frac{k}{n}$, and let $p_{\mu}$ be the probability that  a random variable with distribution Binomial$(n,p)$ takes its mean value $\mu=k$.
In other words,
$$p_{\mu}={n \choose k}p^k(1-p)^{n-k}.$$
With this new notation the function appearing in Conjecture~\ref{LMC} is
$${n \choose k}^2\left(\frac{d-p}{d}\right)^{n(d-p)}(dp)^{np}=p_{\mu}^2\exp (2n\G_d(p)).$$
Hence Conjecture~\ref{LMC} claims that
$$m_k(G)\geq p_{\mu}^2\exp (2n\G_d(p)).$$
It turns out that a slightly stronger statement is true.

\begin{Th} \label{main} Let $G$ be a $d$--regular bipartite graph on $v(G)=2n$ vertices,  and let $m_k(G)$ denote the number of matchings of size $k$. Furthermore, let $p=\frac{k}{n}$, and $p_{\mu}$ be the probability that  a random variable with distribution Binomial$(n,p)$ takes its mean value $\mu=k$. Then
$$m_k(G)\geq p_{\mu}\exp (2n\G_d(p)).$$
In particular, Conjecture~\ref{LMC} holds true. Furthermore, for every $0\leq k<n$ there exists a $d$--regular bipartite graph $G$ on $2n$ vertices such that
$$m_k(G)\leq \sqrt{\frac{1-p/d}{1-p}}\cdot p_{\mu}\exp(2n \G_d(p)).$$
\end{Th}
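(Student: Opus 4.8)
The plan is to study the whole matching generating polynomial $M(G,t)=\sum_{k=0}^{n}m_k(G)t^k$ rather than the single coefficient, and to exploit its real-rootedness. By the Heilmann--Lieb theorem the matching polynomial has only real roots, hence $M(G,t)=\prod_{j=1}^{n}(1+\gamma_j t)$ with every $\gamma_j\ge 0$; there are exactly $n$ factors because a $d$--regular bipartite graph has a perfect matching. Setting $q_j(t)=\frac{\gamma_j t}{1+\gamma_j t}$, the normalized coefficients
$$\frac{m_k(G)\,t^k}{M(G,t)}=\Pr[X_t=k],\qquad X_t=\sum_{j=1}^{n}\mathrm{Bern}\bigl(q_j(t)\bigr),$$
are exactly the law of a Poisson--binomial variable $X_t$ with $n$ trials. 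I would choose $t=t^\ast$ so that the mean $\mathbb{E}X_{t^\ast}=\sum_j q_j(t^\ast)$ equals the integer $k=pn$ (the mean increases continuously from $0$ to $n$ as $t$ ranges over $(0,\infty)$). Then
$$m_k(G)=M(G,t^\ast)\,(t^\ast)^{-k}\cdot\Pr[X_{t^\ast}=k],$$
and the theorem reduces to two independent lower bounds: \textbf{(i)} $M(G,t^\ast)(t^\ast)^{-k}\ge\exp\!\bigl(2n\G_d(p)\bigr)$, and \textbf{(ii)} $\Pr[X_{t^\ast}=k]\ge p_\mu$.

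For \textbf{(i)} I would run a bootstrap over disjoint unions that ``vanishes'' the error term of Theorem~\ref{Gurvits}, as foreseen in the remark following that theorem. Let $H=N\cdot G$ be the disjoint union of $N$ copies of $G$, a $d$--regular bipartite graph on $2nN$ vertices with the same density $p$. Since matchings multiply over components, $M(H,t)=M(G,t)^N$, so the Poisson--binomial law attached to $H$ at the parameter $t^\ast$ is the $N$--fold convolution of that of $X_{t^\ast}$; its mean is the integer $Nk$ and its variance is $N\,\mathrm{Var}(X_{t^\ast})$. A local central limit theorem then gives $\Pr[X^H_{t^\ast}=Nk]=\Theta(N^{-1/2})$, whence
$$\frac{1}{N}\ln m_{Nk}(H)=\ln M(G,t^\ast)-k\ln t^\ast+\frac{1}{N}\ln\Pr[X^H_{t^\ast}=Nk].$$
Applying Theorem~\ref{Gurvits} to $H$ and letting $N\to\infty$ annihilates both the $\tfrac1N\ln\Pr$ term and the $o_{v(H)}(1)$ term, leaving $\ln M(G,t^\ast)-k\ln t^\ast\ge 2n\G_d(p)$, which is \textbf{(i)}.

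For \textbf{(ii)} I would invoke the extremality of the genuine binomial distribution: among all sums of $n$ independent Bernoulli variables whose success probabilities add up to a fixed integer $k$, the probability of hitting the common mean $k$ is \emph{smallest} when the probabilities are all equal, i.e. for $\mathrm{Bin}(n,k/n)$. This yields $\Pr[X_{t^\ast}=k]\ge\binom{n}{k}(k/n)^k(1-k/n)^{n-k}=p_\mu$. Combining \textbf{(i)} and \textbf{(ii)} gives $m_k(G)\ge p_\mu\exp(2n\G_d(p))$, which is stronger than Conjecture~\ref{LMC} because $p_\mu\le 1$. I expect this Poisson--binomial inequality --- the statement that equalizing the success probabilities disperses the mass the most --- to be the main obstacle, since it is exactly the sharp input that upgrades Gurvits's $e^{o(n)}$ slack to the precise factor $p_\mu=\Theta(n^{-1/2})$.

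For the tightness statement I would take the explicit graph $G=\tfrac{n}{d}K_{d,d}$, the disjoint union of $\tfrac nd$ copies of $K_{d,d}$, which is $d$--regular bipartite on $2n$ vertices. Here $M(G,t)=M(K_{d,d},t)^{n/d}$ is completely explicit, inequality \textbf{(i)} is saturated in the limit, and a local central limit theorem evaluates $m_k(G)$ together with its exact Gaussian prefactor $\bigl(2\pi\,(n/d)\,\mathrm{Var}(X^{K_{d,d}}_{t^\ast})\bigr)^{-1/2}$. Comparing this prefactor with the normal approximation $p_\mu\sim\bigl(2\pi n\,p(1-p)\bigr)^{-1/2}$ reduces the claim to a single variance computation for $K_{d,d}$, which produces precisely the advertised factor $\sqrt{(1-p/d)/(1-p)}$.
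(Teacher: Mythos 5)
Your proof of the main inequality is essentially the paper's own argument: the same decomposition $m_k(G)=M(G,t^\ast)(t^\ast)^{-k}\Pr[X_{t^\ast}=k]$ via Heilmann--Lieb and the Poisson--binomial representation, the same "disjoint copies" bootstrap to upgrade Gurvits's $o_{v(G)}(1)$ bound to the clean inequality $\ln M(G,t^\ast)-k\ln t^\ast\geq 2n\G_d(p)$ (the paper phrases this as $\lambda_G(p)\geq \G_d(p)$ and uses Darroch's rule where you use the trivial bound $\Pr\leq 1$ plus a local CLT), and the same extremal fact for step (ii), which is precisely Hoeffding's 1964 theorem, cited and invoked (not reproved) in the paper exactly as you propose. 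That part is correct.

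The tightness part, however, contains a genuine error: $\tfrac{n}{d}K_{d,d}$ is the \emph{wrong} extremal example. Inequality (i) is not saturated for it, even in the limit: $K_{d,d}$ contains $4$-cycles, so its matching measure is not the Kesten--McKay measure and $\lambda_{K_{d,d}}(p)>\G_d(p)$ strictly for $p>0$ (disjoint unions of $K_{d,d}$ in fact \emph{maximize} matching counts among $d$-regular bipartite graphs; the graphs approaching the lower bound are the large-girth ones, which Benjamini--Schramm converge to $\mathbb{T}_d$). Concretely, already at $k=n$ one has $m_n(\tfrac{n}{d}K_{d,d})=(d!)^{n/d}$, e.g.\ $6^{n/3}\approx e^{0.597n}$ for $d=3$, versus the required bound $\exp(2n\G_3(1))\approx e^{0.288n}$, so your candidate exceeds the claimed upper bound by an exponential factor, and the same happens for all $p$ bounded away from $0$. (It also needs $d\mid n$, a lesser issue.) The paper instead proves existence by a first-moment computation in the configuration model: $\mathbb{E}\,m_k(G)={n\choose k}^2 d^{2k}{dn\choose k}^{-1}$, and a Stirling estimate shows this expectation is at most $\sqrt{(1-p/d)/(1-p)}\cdot p_\mu\exp(2n\G_d(p))$, so some graph in the support satisfies the bound. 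Your local-CLT-plus-variance strategy could in principle be salvaged, but only if run on a graph sequence converging to $\mathbb{T}_d$ with quantitative control on how fast $\lambda_{G_i}(p)\to\G_d(p)$, which is substantially harder than the one-line expectation computation.
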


Note that $p_{\mu}\approx \frac{1}{\sqrt{2\pi p(1-p)n}}$, this means that if $p$ is separated away from $0$ and $1$, then we can obtain an extra $c_p\sqrt{n}$ factor compared to Conjecture~\ref{LMC}. Also note that $p_{\mu}\geq \frac{1}{n+1}\geq \frac{1}{2n}$ always holds true. This inequality might be easier to handle in some cases.
\bigskip

We will practically show that Theorem~\ref{Gurvits} implies Conjecture~\ref{LMC}. The idea of the proof of Theorem~\ref{main} is to convert Gurvits's theorem to a statement on analytical functions arising from statistical mechanics. Then tools from analysis and probability theory together with a simple observation will enable us to replace the term $o_{v(G)}(1)$ in Gurvits's theorem with an effective one which is slightly better than the corresponding term in the original Gurvits's theorem (see Remark~\ref{rem: Gurvits}).
\bigskip

We offer one more theorem for $d$--regular bipartite graphs.

\begin{Th} \label{direct} Let $G$ be a $d$--regular bipartite graph on $v(G)=2n$ vertices,  and let $m_k(G)$ denote the number of matchings of size $k$. Let $0\leq p\leq 1$, then
$$\sum_{k=0}^nm_k(G)\left(\frac{p}{d}\left(1-\frac{p}{d}\right)\right)^{k}(1-p)^{2(n-k)}\geq \left(1-\frac{p}{d}\right)^{nd}.$$
\end{Th}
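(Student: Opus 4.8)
\section{Proof proposal for Theorem~\ref{direct}}

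The plan is to first recast the claim as a sharp lower bound on the matching generating polynomial $M_G(z)=\sum_{k=0}^n m_k(G)z^k$. Factoring out $(1-p)^{2n}$ from the left-hand side gives
\[ \sum_{k=0}^n m_k(G)\Big(\tfrac pd\big(1-\tfrac pd\big)\Big)^{k}(1-p)^{2(n-k)}=(1-p)^{2n}M_G(z),\qquad z=\frac{p(d-p)}{d^2(1-p)^2}, \]
so Theorem~\ref{direct} is equivalent to the inequality $M_G(z)\ge (d-p)^{nd}\big/\big(d^{nd}(1-p)^{2n}\big)$ at the explicit positive fugacity $z$. The target is precisely the value attached to the infinite $d$--regular tree $\mathbb{T}_d$: per vertex it reads $\exp\!\big(2n\cdot\tfrac d2\ln(1-\tfrac pd)\big)=(1-p/d)^{nd}$, consistent with the remark that $(1-p/d)^{nd}$ is a tree entropy and with the fact that the $p=1$ specialization is exactly Schrijver's bound.

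I would then prove this lower bound using the paper's own device, a $2$--lift monotonicity for the matching generating polynomial. Concretely, for any $2$--lift $H$ of $G$ (a $d$--regular bipartite graph on $4n$ vertices) I would establish $(1-p)^{4n}M_H(z)\le\big((1-p)^{2n}M_G(z)\big)^2$, i.e.\ that the normalized logarithm $\tfrac1{v(G)}\ln\big((1-p)^{v(G)}M_G(z)\big)$ is non-increasing under passing to a $2$--lift. Since a tower of (random) $2$--lifts converges in the Benjamini--Schramm sense to $\mathbb{T}_d$, iterating drives this normalized quantity down to its value on the tree, which the fixed-point recursion for matching ratios on $\mathbb{T}_d$ evaluates to $\tfrac d2\ln(1-\tfrac pd)$ per vertex; exponentiating yields exactly $(1-p/d)^{nd}$. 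In the extreme case $p=1$ the tree value can instead be certified directly by Schrijver's permanent inequality $\mathrm{Per}(\tilde A)\ge\prod_{i,j}(1-a_{ij})$ applied to the doubly stochastic matrix with entries $1/d$ on the edges of $G$, which recovers Theorem~\ref{Schrijver} and so anchors the scheme.

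The hard part will be \emph{exactness}: unlike Theorem~\ref{Gurvits}, this statement carries no $o_{v(G)}(1)$ error, so the $2$--lift comparison must be a genuine finite inequality and the tree value must be produced in closed form with no slack. Proving $(1-p)^{4n}M_H(z)\le\big((1-p)^{2n}M_G(z)\big)^2$ is the delicate step; I expect to derive it from the branching structure of the $2$--lift together with the real-rootedness of the matching polynomial (Heilmann--Lieb), reducing it to a one-variable estimate at the fixed point of the tree recursion. As independent heuristic support, and a possible alternative route, one can feed the per-level bound $m_k(G)\ge p_\mu\exp(2n\,\G_d(q))$ from Theorem~\ref{main} (with $q=k/n$) into the sum: a short computation shows the exponent $2\,\G_d(q)+q\ln\big(\tfrac pd(1-\tfrac pd)\big)+2(1-q)\ln(1-p)$ is maximized at $q=p$ with value \emph{exactly} $d\ln\tfrac{d-p}{d}$, so a Gaussian summation of the $\Theta(\sqrt n)$ terms near the peak should recover the $\Theta(1)$ factor that $p_\mu$ costs and return the clean constant $(1-p/d)^{nd}$. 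Either route hinges on the same exact cancellation at $q=p$, which is what singles out the fugacity $z$ above.
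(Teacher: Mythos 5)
Your proposal is correct and follows essentially the same route as the paper: rewrite the left-hand side as $(1-p)^{2n}M(G,t)$ at the fugacity $t=\frac{p(d-p)}{d^2(1-p)^2}$, use the $2$-lift inequality $M(H,t)\le M(G,t)^2$ (the paper's Lemma~\ref{mainlemma}, which is proved by a purely combinatorial projection argument, not via real-rootedness) along a tower of lifts Benjamini--Schramm converging to $\mathbb{T}_d$, and evaluate the tree free energy in closed form at that fugacity, which yields exactly $(1-p/d)^{nd}$ with no slack. The only differences are cosmetic: the paper packages the monotone limit step as Theorem~\ref{integral-inequality} via weak convergence of matching measures, and the "delicate step" you worry about is already available as Lemma~\ref{mainlemma}, with the exact tree value read off from the formula for $S_d(t)$ in Theorem~\ref{tree}.
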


When $p=1$, Theorem~\ref{direct} immediately gives back Theorem~\ref{Schrijver}. Indeed, when $p=1$ only the term $m_n(G)\left(\frac{1}{d}\left(1-\frac{1}{d}\right)\right)^n$
does not vanish on the left hand side, because of the term $(1-p)^{2(n-k)}$, and we get that
$$m_n(G)\left(\frac{1}{d}\left(1-\frac{1}{d}\right)\right)^n\geq \left(1-\frac{1}{d}\right)^{nd}$$
which is equivalent with 
$$m_n(G)\geq \left(\frac{(d-1)^{d-1}}{d^{d-2}}\right)^n.$$
\bigskip

As we mentioned Theorem~\ref{Gurvits} implies Theorem~\ref{main}, but the main goal of this paper is to give a new proof of Gurvits's and Schrijver's theorems with a novel method. This method will be used to prove Theorem~\ref{direct} too.  This new proof shows that the extremal graph is in some sense the $d$--regular infinite tree. Indeed, we will show that the function on the right hand side of Theorem~\ref{Gurvits} is nothing else than the so-called entropy function of the $d$--regular infinite tree;  the entropy functions of finite and infinite graphs  will be introduced in Section 2. It means that for a deeper understanding of these theorems, one needs to step out from the universe of finite graphs. We will do it by the recently developed theory of Benjamini--Schramm convergence of bounded degree graphs. This new technique also enables us to extend  these theorems to $(a,b)$--biregular bipartite graphs.

\begin{Th} \label{biregular} Let $G=(A,B,E)$ be an $(a,b)$--biregular bipartite graph on $v(G)$ vertices such that every vertex in $A$ has degree $a$, and every vertex in $B$ has degree $b$. Assume that $a\geq b$, i. e., $|A|\leq |B|$.  Let $m_k(G)$ denote the number of matchings of size $k$, and $p=\frac{2k}{v(G)}$. Furthermore, let $q=\frac{a+b}{2b}p$, and let $p_{\mu}$ be the probability that  a random variable with distribution Binomial$(|A|,q)$ takes its mean value $\mu=k$. Then
$$m_k(G)\geq p_{\mu}\exp(v(G)\cdot \G_{a,b}(p)).$$
\end{Th}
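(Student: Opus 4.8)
The plan is to mirror the two-step strategy behind Theorem~\ref{main}. Since no biregular analogue of Gurvits's Theorem~\ref{Gurvits} is available to quote, I would \emph{first} establish the sharp asymptotic bound $\ln m_k(G)\geq v(G)\,\G_{a,b}(p)$ directly through the entropy-function/matching-measure formalism, and \emph{then} upgrade it to the stated effective inequality by extracting the $p_\mu$ factor from a probabilistic reading of the matching generating polynomial. Throughout, note that regularity forces $a|A|=b|B|$, so $|A|=\tfrac{b}{a+b}v(G)$, and that Hall's condition (for $S\subseteq A$ one has $a|S|\le b|N(S)|$, hence $|N(S)|\ge|S|$ because $a\ge b$) guarantees a matching saturating the smaller side $A$. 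Thus $\nu(G)=|A|$, the admissible range is $0\le k\le|A|$, and $q=\tfrac{a+b}{2b}p=k/|A|$ is exactly the fraction of $A$ covered by a $k$-matching. This explains why the comparison distribution is Binomial$(|A|,q)$ rather than a balanced binomial.

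The heart of the argument is the \emph{extremality of the tree}, which I expect to be the main obstacle. I would first verify, using the explicit computation underlying the rewritten form of $\G_{a,b}(p)$, that $\G_{a,b}(p)$ is precisely the matching entropy function of the infinite $(a,b)$-biregular tree $\mathbb{T}_{a,b}$. The key step is then to show that among all finite $(a,b)$-biregular bipartite graphs this entropy is minimized by $\mathbb{T}_{a,b}$. Introduce the smooth entropy
$$L_G(p):=\inf_{x>0}\frac{1}{v(G)}\Big(\ln\sum_{j} m_j(G)x^j-k\ln x\Big),$$
which is a continuous functional of the matching measure $\rho_G$. I would then invoke two facts established earlier in the paper: that $\rho_G$ is continuous under Benjamini--Schramm convergence, and that the matching free energy is monotone under $2$-lifts. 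Since every finite $(a,b)$-biregular graph has universal cover $\mathbb{T}_{a,b}$, and iterated $2$-lifts preserve biregularity and the bipartite structure while converging to $\mathbb{T}_{a,b}$ in the Benjamini--Schramm sense and pushing the entropy monotonically toward its tree value, one concludes $L_G(p)\ge \G_{a,b}(p)$ for every finite $G$. Pinning down the correct direction of the $2$-lift monotonicity in the unbalanced biregular setting is the delicate point.

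For the upgrade I would use the Heilmann--Lieb theorem: the matching generating polynomial factors as $\sum_{j} m_j(G)x^j=\prod_{i=1}^{|A|}(1+\gamma_i x)$ with all $\gamma_i\ge 0$. Fixing $x>0$ and letting $X_1,\dots,X_{|A|}$ be independent with $\Pr(X_i=1)=\tfrac{\gamma_i x}{1+\gamma_i x}$, the sum $S=\sum_i X_i$ satisfies
$$m_k(G)=x^{-k}\Big(\prod_{i=1}^{|A|}(1+\gamma_i x)\Big)\,\Pr(S=k).$$
Choosing $x$ so that $\mathbb{E}S=\sum_i\tfrac{\gamma_i x}{1+\gamma_i x}=k$ is exactly the critical point of the convex function defining $L_G(p)$, so the prefactor equals $\exp(v(G)\,L_G(p))\ge\exp(v(G)\,\G_{a,b}(p))$ by the previous paragraph. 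It then remains to bound $\Pr(S=k)$ from below, and here the \emph{simple observation} is that for a sum of independent Bernoulli variables whose mean $\mathbb{E}S=k$ is an integer, the probability of hitting the mean is minimized, over all choices of individual success probabilities with that fixed total, by the balanced case; hence $\Pr(S=k)\ge\binom{|A|}{k}q^k(1-q)^{|A|-k}=p_\mu$ with $q=k/|A|$. Combining the two bounds yields $m_k(G)\ge p_\mu\exp(v(G)\,\G_{a,b}(p))$. Because this decomposition never refers to an $o(1)$ term, the effective $p_\mu$ factor is obtained without the vanishing trick needed when one starts from Gurvits's theorem: the entire asymptotic slack has been pushed into the exact extremality statement of the second paragraph.
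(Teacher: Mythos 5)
Your proposal is correct and follows essentially the same route as the paper: the entropy inequality $\lambda_G(p)\ge\G_{a,b}(p)$ is obtained exactly as in the paper via the $2$-lift monotonicity of $M(G,t)$ and Benjamini--Schramm convergence of iterated $2$-lifts to $\mathbb{T}_{a,b}$, and your lower bound on $\Pr(S=k)$ for the real-rooted coefficient distribution is precisely Hoeffding's inequality, which is what the paper invokes. The only additions are welcome bookkeeping (Hall's condition giving $\nu(G)=|A|$ and $q=k/|A|$) that the paper leaves implicit.
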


Note that if $k=|A|$, then $p_{\mu}=1$, and $p_{\mu}\geq \frac{1}{|A|+1}\geq \frac{1}{v(G)}$ holds true for any $p$.
\medskip

One can view Theorem~\ref{biregular} and the other results as extremal graph theoretic problems where one seeks for the extremal value of a certain graph parameter $p(G)$ in a given family $\mathcal{G}$ of graphs. In extremal graph theory it is a classical idea to try to find some graph transformation $\varphi$ such that $p(G)\leq p(\varphi(G))$ (or $p(G)\geq p(\varphi(G))$), and $\varphi(G)\in \mathcal{G}$ for every $G\in \mathcal{G}$. Then we apply this transformation as long as we can, and when we stop then we know that the extremal graph must be in a special subfamily of $\mathcal{G}$, where the optimization problem can be solved easily. See for instance the proof of Tur\'an's theorem using Zykov's symmetrization \cite{zyk}. In our case, the transformation $\varphi$ will be simply any $2$-lift of the graph (see Definition~\ref{def: 2-lift}). The new ingredient in our proof is that the sequence of graphs obtained by applying repeatedly the $2$-lifts will not stabilize, but instead converge to the infinite biregular tree. In fact, most of our work is related to the graph convergence part, and not the graph transformation part.
\bigskip

\textbf{This paper is organized as follows.} In the next section we introduce all the necessary tools including the density function $p(G,t)$, the entropy function $\lambda_G(p)$, the Benjamini--Schramm convergence, and the computation of the entropy function of the infinite biregular tree. In this section we also give various results on the number of matchings of random (bi)regular graphs which shows the tightness of our results. In particular, we prove the second half of Theorem~\ref{main} here. In Section 3 we show that Gurvits's theorem is equivalent with certain (effective) statement on the entropy function. In Section 4 we give the new proof of Schrijver's and Gurvits's theorem together with the main part of the proof of Theorem~\ref{biregular}. In Section 5 we deduce Theorem~\ref{main} from the new version of Gurvits's theorem, we prove Theorem~\ref{direct}, and we also finish the proof of  Theorem~\ref{biregular}.
\bigskip

\textbf{Some advice how to read this paper.} This paper is occasionally a bit technical, especially Section~\ref{entropy-function}. In order to make it easier to read this paper we roughly summarize this paper and give a road map for the first reading. Assuming that the Reader is mainly interested in the proof of Theorem~\ref{main} we first give an idea how the proof works. 

Assume that $p(G)$ is some graph parameter related to matchings and it is normalized in such a way that we can compare graphs on different sizes, in particular it makes sense to compare two $d$--regular graphs. For instance
$$p(G)=\frac{\ln \textrm{pm}(G)}{v(G)}$$
is such a graph parameter. We will prove that for a bipartite $d$--regular graph we have
$$p(G)\geq p(\mathbb{T}_d),$$
where $p(\mathbb{T}_d)$ apriori does not make sense, but can be defined as a limit $\lim_{i\to \infty}p(H_i)$, where $H_i$ is a sequence of graphs "converging locally" to $\mathbb{T}_d$. The plan is the following: we define a sequence of graphs $G_i$ such that $G=G_0$ and
$$p(G)=p(G_0)\geq p(G_1)\geq p(G_2)\geq p(G_3)\geq \dots $$
and
$$\lim_{i \to \infty}p(G_i)=p(\mathbb{T}_d).$$
This clearly gives that
$$p(G)\geq p(\mathbb{T}_d).$$

A technical difficulty arises from the fact it is not really convenient to work with the parameter
$$q(G)=\frac{\ln m_k(G)}{v(G)}.$$
Instead we use the entropy function $\lambda_G(p)$ which is strongly related to the parameter $q(G)$, but is more amenable to any analysis. So 
$\lambda_G(p)$ will play the role of $p(G)$. Of course, we need some tools to transfer our knowledge from $\lambda_G(p)$ to $q(G)$, but it is again just a technical problem.

So by keeping in mind our very simple plan and our techincal difficulty we suggest the following road map for the first reading:
(1) first read the alternative definition of $\lambda_G(p)$, Remark~\ref{alternative entropy}, this is a definition which is easy to understand, then take a quick look at its properties, Proposition~\ref{asymp} without reading its proof, (2) read the definition of Benjamini--Schramm convergence, Definition~\ref{BS-convergence} and Example~\ref{large-girth}, (3) jump to Section~\ref{new proof}, read it only till the proof of Theorem~\ref{Schrijver} and \ref{Gurvits} (4) finally read Section~\ref{finish}. We believe that only reading this core of the paper will give a good impression of the content of this paper and the novel method applied in this paper. 

Let us mention that if the Reader is familiar with Gurvits's result, Theorem~\ref{Gurvits} and only wants to know how one can derive Theorem~\ref{main} from it then after step (1) in the above plan one can jump immediately to Section~\ref{effective form} and then Section~\ref{finish}.

\section{Preliminaries and basic notions} \label{entropy-function}

This section is mostly reproduced from the paper \cite{ACH}. We could have simply cited this paper, but for the sake of the convenience of the Reader, we also included the proofs. 
\bigskip

Throughout the paper, $G$ denotes a finite graph with vertex set $V(G)$ and
edge set $E(G)$. The number of vertices is denoted by $v(G)$. The \emph{degree} of a vertex
is the number of its neighbors. A graph is called $d$\emph{--regular} if every
vertex has degree exactly $d$.  A \emph{cycle} $C$ is a 
sequence of vertices $v_1,v_2,\dots ,v_k$ such that $v_i\neq v_j$ if $i\neq j$ and $(v_i,v_{i+1})\in E(G)$ for $i=1,\dots ,k$, where $v_{k+1}=v_1$. The length of the cycle is $k$ in this case. A \emph{$k$--matching} is a set of edges $\{e_1,\dots ,e_k\}$ such that for any $i$ and $j$, the vertex set of $e_i$ and $e_j$ are disjoint, in other words, $e_1,\dots ,e_k$ cover $2k$ vertices together. A \emph{perfect matching} is a matching which covers every vertices. A graph is called \emph{bipartite} if the vertices can be colored with two colors such that all edges connect two vertices of different colors. The standard notation for bipartite graph is $G=(A,B,E)$, where $A$ and $B$ denote the vertex sets corresponding to the two color classes.
\bigskip

Let $G=(V,E)$ be a finite graph on $v(G)$ vertices, and $m_k(G)$ denotes the number of $k$-matchings ($m_0(G)=1$). Let $t$ be a non-negative real number; in statistical mechanics it is called the activity. Let
$$M(G,t)=\sum_{k=0}^{\lfloor v(G)/2\rfloor}m_k(G)t^k,$$
and 
$$\mu(G,x)=\sum_{k=0}^{\lfloor v(G)/2\rfloor}(-1)^km_k(G)x^{v(G)-2k}.$$
We call $M(G,t)$ the matching generating function\footnote{In statistical mechanics, it is called the partition function of the monomer-dimer model.} , $\mu(G,x)$ the matching polynomial \cite{hei,god3,GG}. Clearly, they encode the same information.
Let
$$p(G,t)=\frac{2t\cdot \frac{d}{dt}M(G,t)}{v(G)\cdot  M(G,t)},$$
and 
$$F(G,t)=\frac{\ln M(G,t)}{v(G)}-\frac{1}{2}p(G,t) \ln(t).$$
We will call $p(G,t)$ the density function. Note that there is a natural interpretation of $p(G,t)$. Assume that we choose a random matching $M$ with probability proportional to $t^{|M|}$. Then the expected number of vertices covered by a random matching is $p(G,t)\cdot v(G)$. 

\noindent Let 
$$p^*(G)=\frac{2\nu(G)}{v(G)},$$
where $\nu(G)$ denotes the number of edges in the largest matching. If $G$ contains a perfect matching, then clearly $p^*=1$.
The function $p=p(G,t)$ is a strictly monotone increasing function which maps $[0,\infty)$ to $[0,p^*)$, where $p^*=p^*(G)$.  Therefore, its inverse function $t=t(G, p)$ maps  $[0,p^*)$ to $[0,\infty)$. (If $G$ is clear from the context, then we simply write $t(p)$ instead of $t(G,p)$.) Let
$$\lambda_G(p)=F(G,t(p))$$
if $p<p^*$, and $\lambda_G(p)=0$ if $p>p^*$.
Note that we have not defined $\lambda_G(p^*)$ yet. We define it as a limit:
$$\lambda_G(p^*)=\lim_{p\nearrow p^*}\lambda_G(p).$$
We will show that this limit exists, see part (c) of Proposition~\ref{asymp}. We will call $\lambda_G(p)$ the entropy function of the graph $G$.

The intuitive meaning of $\lambda_G(p)$ is the following. Assume that we want to count the number of matchings covering $p$ fraction of the vertices. 
Let us assume that it makes sense: $p=\frac{2k}{v(G)}$, and so we wish to count $m_k(G)$. Then
$$\lambda_G(p)\approx \frac{\ln m_k(G)}{v(G)}.$$
The more precise formulation of this statement will be given in Proposition~\ref{asymp}. 
\bigskip

\begin{Prop} \cite{ACH} \label{asymp} Let $G$ be a finite graph. \\ 
(a) Let $rG$ be the union of $r$ disjoint copies of $G$. Then
$$\lambda_G(p)=\lambda_{rG}(p).$$
(b) If $p<p^*$, then 
$$\frac{d}{dp}\lambda_G(p)=-\frac{1}{2}\ln t(p).$$
(c) The limit 
$$\lambda_G(p^*)=\lim_{p\nearrow p^*}\lambda_G(p)$$
exists. \\
(d) Let $k\leq \nu(G)$ and  $p=\frac{2k}{v(G)}$. Then
$$\left|\lambda_G(p)-\frac{\ln m_k(G)}{v(G)}\right|\leq \frac{\ln v(G)}{v(G)}.$$
(e) Let $k=\nu(G)$, then for $p^*=\frac{2k}{v(G)}$ we have
$$\lambda_G(p^*)=\frac{\ln m_k(G)}{v(G)}.$$
In particular, if $G$ contains a perfect matching then,
$$\lambda_G(1)=\frac{\ln \prm(G)}{v(G)}.$$
(f) If for some function $f(p)$ we have
$$\lambda_G(p)\geq f(p)+o_{v(G)}(1)$$
for all graphs $G$, then
$$\lambda_G(p)\geq f(p).$$
(g) If for some graphs $G_1$ and $G_2$ we have
$$\frac{\ln M(G_1,t)}{v(G_1)}\geq \frac{\ln M(G_2,t)}{v(G_2)}$$
for every $t\geq 0$, then
$$\lambda_{G_1}(p)\geq \lambda_{G_2}(p)$$
for every $0\leq p\leq 1$.

\end{Prop}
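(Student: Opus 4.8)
I would handle the seven parts in an order that front-loads the algebraic and calculus identities and isolates the one genuinely substantial point, part (d).

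For (a) the only input is the multiplicativity $M(rG,t)=M(G,t)^r$ over disjoint unions, together with $v(rG)=r\,v(G)$. Substituting into the definitions shows that $\frac{\ln M(rG,t)}{v(rG)}$, then $p(rG,t)$ (the factor $r$ cancels in the logarithmic derivative), and hence $F(rG,t)$, all coincide with their $G$-counterparts; since $p^*(rG)=p^*(G)$ as well, the inverse functions and therefore the entropy functions agree. For (b) I would differentiate $F(G,t)=\frac{\ln M(G,t)}{v(G)}-\frac12 p(G,t)\ln t$ in $t$. Using $\frac{M'(G,t)}{v(G)M(G,t)}=\frac{p(G,t)}{2t}$, the two ``$\frac{p}{2t}$'' contributions cancel, leaving $\partial_t F=-\frac12(\partial_t p)\ln t$; the chain rule $\frac{d\lambda_G}{dp}=\partial_t F\cdot\frac{dt}{dp}$ together with $\partial_t p\cdot\frac{dt}{dp}=1$ then gives $\frac{d}{dp}\lambda_G(p)=-\frac12\ln t(p)$.

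Parts (c) and (e) I would prove at once through the large-$t$ asymptotics of $F$. As $t\to\infty$ we have $M(G,t)=m_\nu t^{\nu}(1+O(1/t))$ with $\nu=\nu(G)$, so $\frac{\ln M(G,t)}{v(G)}=\frac{\nu}{v(G)}\ln t+\frac{\ln m_\nu}{v(G)}+o(1)$ and $p(G,t)=p^*-O(1/t)$ with $p^*=\frac{2\nu}{v(G)}$. Substituting these into $F$, the two $\frac{p^*}{2}\ln t$ terms cancel (the defect $(p^*-p(G,t))\ln t=O(\tfrac{\ln t}{t})\to0$), so $F(G,t)\to\frac{\ln m_\nu}{v(G)}$. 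Since $t(p)\to\infty$ as $p\nearrow p^*$, this shows the limit in (c) exists and simultaneously identifies it as $\frac{\ln m_{\nu(G)}(G)}{v(G)}$, which is (e).

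Part (d) is where the real work lies, and I expect it to be the main obstacle. Setting $t=t(p)$, the defining condition $p(G,t)=p$ reads $\frac{tM'(G,t)}{M(G,t)}=k$, and since $p=\frac{2k}{v(G)}$ we get $v(G)\lambda_G(p)=\ln M(G,t)-k\ln t=\ln\big(M(G,t)/t^{k}\big)$. Introducing the probability distribution $P(j)=\frac{m_j t^{j}}{M(G,t)}$ on matching sizes, one has $M(G,t)/t^{k}=m_k/P(k)$, hence $v(G)\lambda_G(p)=\ln m_k-\ln P(k)$. As $P(k)\le1$ this already yields the lower bound $\lambda_G(p)\ge\frac{\ln m_k}{v(G)}$ (in particular $\lambda_G\ge0$, since $m_k\ge1$); the upper bound is equivalent to $P(k)\ge\frac{1}{v(G)}$. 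The crucial structural input is the Heilmann--Lieb theorem: $M(G,t)$ has only real negative roots, so $M(G,t)=\prod_i(1+\theta_i t)$ with $\theta_i>0$ and $P$ is exactly the law of a sum $S$ of $\nu$ independent Bernoulli variables, a Poisson--binomial distribution on $\{0,\dots,\nu\}$. The condition $\frac{tM'}{M}=k$ says its mean equals the integer $k$, and by the classical fact (Darroch's rule) that the mode of a Poisson--binomial law with integer mean sits at the mean, we get $P(k)=\max_j P(j)\ge\frac{1}{\nu+1}\ge\frac{1}{v(G)}$, since $\nu\le v(G)/2$. This closes the estimate.

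Finally, for (f) I would apply the hypothesis to $rG$: by (a), $\lambda_{rG}(p)=\lambda_G(p)$, while the hypothesis gives $\lambda_{rG}(p)\ge f(p)+o_{v(rG)}(1)$; letting $r\to\infty$ sends $v(rG)\to\infty$ and the error term to $0$, so $\lambda_G(p)\ge f(p)$. For (g) the right tool is the variational characterization of the entropy function. Writing $g(t)=\frac{\ln M(G,t)}{v(G)}$, the function $t\mapsto g(t)-\frac{p}{2}\ln t$ has derivative $\frac{1}{2t}\big(p(G,t)-p\big)$, which changes sign from negative to positive at $t=t(p)$ because $p(G,t)$ is increasing; hence this function is minimized there and $\lambda_G(p)=\min_{t>0}\big(g(t)-\frac{p}{2}\ln t\big)$ for $p\le p^*$. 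Given $g_1\ge g_2$ pointwise, we have $g_1(t)-\frac{p}{2}\ln t\ge g_2(t)-\frac{p}{2}\ln t$ for every $t$, and taking minima gives $\lambda_{G_1}(p)\ge\lambda_{G_2}(p)$ on $[0,p^*(G_2)]$. Comparing the leading $\frac{p^*}{2}\ln t$ asymptotics forces $p^*(G_1)\ge p^*(G_2)$, so on the remaining range $p>p^*(G_2)$ one has $\lambda_{G_2}(p)=0\le\lambda_{G_1}(p)$ by the nonnegativity noted in (d), completing the comparison for all $0\le p\le1$.
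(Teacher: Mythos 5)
Your proof is correct, and on the central part (d) it coincides with the paper's argument: both rest on the Heilmann--Lieb theorem to realize $\bigl(m_j(G)t^j/M(G,t)\bigr)_j$ as a Poisson--binomial law and on Darroch's rule to place its mode at the integer mean $k$, giving $m_k(G)t^k\geq M(G,t)/v(G)$. Where you genuinely diverge is in (c), (e) and (g). For (c) and (e) the paper first gets existence of the limit from the monotonicity of $\lambda_G$ beyond $p(G,1)$ (via part (b)), and then obtains (e) by applying the bound of (d) to $rG$ and letting $r\to\infty$; you instead compute the $t\to\infty$ asymptotics $F(G,t)=\frac{\ln m_{\nu}(G)}{v(G)}+O\bigl(\frac{\ln t}{t}\bigr)$ directly, which yields both statements in one stroke and identifies the limit without the replication trick. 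For (g) the paper inspects the minimum of $\lambda_{G_1}-\lambda_{G_2}$ and checks nonnegativity at endpoints and at critical points, where the two activities coincide; your variational characterization $\lambda_G(p)=\min_{t>0}\bigl(\frac{\ln M(G,t)}{v(G)}-\frac{p}{2}\ln t\bigr)$ packages the same mechanism as a one-line comparison of minima, which is cleaner. Two small points to tighten: in (d) you set $t=t(p)$, which does not exist when $k=\nu(G)$ (no finite $t$ has $p(G,t)=p^*$) --- the paper treats this case by a separate limiting argument, whereas in your ordering it is already covered by the exact equality in (e), but you should say so explicitly; and in (g) the nonnegativity of $\lambda_{G_1}$ on $(p^*(G_2),p^*(G_1)]$ needs slightly more than ``noted in (d)'', since (d) only covers $p$ of the form $2k/v(G)$ (pass to $rG$ and use continuity, or use concavity together with nonnegativity at $p=0$ and $p=p^*$).
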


\begin{Rem} \label{alternative entropy} Part (a) and (d) of Proposition~\ref{asymp} together  suggest an alternative definition for the entropy function $\lambda_G(p)$ for $p<p^*$: let $(k_r)$ be a sequence of integers such that
$$\lim_{r\to \infty}\frac{2k_r}{rv(G)}=p$$
then
$$\lambda_G(p)=\lim_{r\to \infty}\frac{\ln m_{k_r}(rG)}{rv(G)}.$$
In general when we have an infinite graph $L$, say $\mathbb{Z}^d$, then it is a natural idea to consider a graph sequence $G_i$ converging to $L$ and and to take a sequence $(k_i)$ such that
$$\lim_{i\to \infty}\frac{2k_i}{v(G_i)}=p$$
then to consider
$$\lambda_L(p)=\lim_{i\to \infty}\frac{\ln m_{k_i}(G_i)}{v(G_i)}.$$
In this sense, this alternative definition is nothing else than to consider the "$G$-lattice", infinitely many disjoint copies of $G$ and approximate it with $G_i=iG$, the union of $i$ copies of $G$.

This alternative definition is much more natural, especially from a statistical physical point of view. On the other hand, this definition is hard to work with.

\end{Rem}

We will need some preparation to prove Proposition~\ref{asymp}. Among many others we will need the following fundamental theorem about the matching polynomial.

\begin{Th}[Heilmann and Lieb \cite{hei}] \label{Hei} The zeros of the matching polynomial
$\mu(G,x)$ are real, and if the largest degree $D$ of $G$ is greater than $1$,
then  all zeros lie in the interval $[-2\sqrt{D-1},2\sqrt{D-1}]$. 
\end{Th}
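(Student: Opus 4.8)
The plan is to prove real-rootedness and the location of the roots separately, both resting on the standard deletion recursions for the matching polynomial. First I would record the three identities: for an edge $e=uv$, $\mu(G,x)=\mu(G-e,x)-\mu(G-u-v,x)$; for a vertex $u$, $\mu(G,x)=x\,\mu(G-u,x)-\sum_{v\sim u}\mu(G-u-v,x)$; and multiplicativity over disjoint unions, $\mu(G_1\cup G_2,x)=\mu(G_1,x)\mu(G_2,x)$. These follow at once by classifying matchings according to whether they use $e$, or according to whether and how they cover $u$.

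For real-rootedness I would pass to the path tree $T(G,u)$, whose vertices are the simple paths in $G$ starting at $u$, two being adjacent when one extends the other by a single vertex. The key step is Godsil's divisibility lemma: $\mu(G,x)$ divides $\mu(T(G,u),x)$. I would prove it by induction on $v(G)$, checking that the rational function $\mu(G-u,x)/\mu(G,x)$ and its counterpart $\mu(T(G,u)-u,x)/\mu(T(G,u),x)$ obey the same continued-fraction recursion coming from the vertex-deletion identity applied at $u$ (deleting the root of the path tree splits it into the path trees of the components of $G-u$). Since $T(G,u)$ is a forest, its matching polynomial coincides with the characteristic polynomial of its adjacency matrix: in the permutation expansion of the determinant only cycle-free linear subgraphs survive, i.e.\ exactly the matchings. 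Hence $\mu(T(G,u),x)$ has real roots, and divisibility forces the roots of $\mu(G,x)$ to be real as well.

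For the bound I would argue directly on $G$, by induction on $v(G)$, that $\mu(G,x)>0$ for every real $x>2\sqrt{D-1}$ (assuming $D=\Delta(G)\ge 2$). Writing $R_v=\mu(G,x)/\mu(G-v,x)$, the vertex-deletion identity gives $R_v=x-\sum_{u\sim v}1/R_u^{(G-v)}$, where $R_u^{(G-v)}$ is the same ratio formed inside $G-v$. Set $\beta=\tfrac12\big(x+\sqrt{x^2-4(D-1)}\big)$, the larger root of $t^2-xt+(D-1)=0$, so that $\beta+\tfrac{D-1}{\beta}=x$. The induction hypothesis I would carry is the sharp bound $R_v\ge x-\deg_G(v)/\beta$; the crucial point is that deleting $v$ drops the degree of each neighbour $u$ by at least one, so $\deg_{G-v}(u)\le D-1$ and the hypothesis gives $R_u^{(G-v)}\ge x-(D-1)/\beta=\beta$. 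Substituting $1/R_u^{(G-v)}\le 1/\beta$ back into the recursion reproduces $R_v\ge x-\deg_G(v)/\beta$, and since $\deg_G(v)\le D$ this lower bound is at least $\beta-1/\beta>0$ in the worst case (using $\beta>\sqrt{D-1}\ge 1$ for $x>2\sqrt{D-1}$, $D\ge2$). Hence every $R_v>0$, and with $\mu(G,x)=R_v\,\mu(G-v,x)$ and the trivial base cases one gets $\mu(G,x)>0$. Thus $\mu$ has no real root exceeding $2\sqrt{D-1}$; because $\mu(G,-x)=(-1)^{v(G)}\mu(G,x)$ the root set is symmetric about $0$, so none is below $-2\sqrt{D-1}$ either. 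Combined with real-rootedness, all zeros lie in $[-2\sqrt{D-1},2\sqrt{D-1}]$.

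I expect the real-rootedness step to be the main obstacle: verifying Godsil's divisibility lemma requires carefully matching the recursive structure of $G$ with that of the path tree, in particular handling the disconnection of $G-u$ and the corresponding splitting of the path tree at its root. An alternative is a direct simultaneous induction proving that $\mu(G-v,x)$ interlaces $\mu(G,x)$, but that route needs a common-interlacer lemma to control the sum $\sum_{u\sim v}\mu(G-u-v,x)$, which is itself the delicate point. The bound, by contrast, is a clean one-parameter induction once the value of $\beta$ is pinned down by the degree drop.
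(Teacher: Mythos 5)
The paper does not prove this statement at all: it is imported as a classical result of Heilmann and Lieb with a citation to \cite{hei}, so there is no in-paper argument to compare against. Your proposal is a correct proof, and it is essentially Godsil's classical argument rather than the original Heilmann--Lieb transfer-matrix one: real-rootedness via the divisibility $\mu(G,x)\mid\mu(T(G,u),x)$ and the identification of the matching polynomial of a forest with its characteristic polynomial, and the root bound via the continued-fraction recursion for $R_v=\mu(G,x)/\mu(G-v,x)$ with the fixed point $\beta+\frac{D-1}{\beta}=x$. Both halves check out: the degree-drop observation $\deg_{G-v}(u)\le D-1$ is exactly what closes the induction $R_v\ge x-\deg_G(v)/\beta$, and the symmetry $\mu(G,-x)=(-1)^{v(G)}\mu(G,x)$ correctly transfers the upper bound to the lower one. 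Two points you should make explicit in a full write-up: the induction must simultaneously carry the positivity $\mu(H,x)>0$ for all induced subgraphs $H$ (so that every ratio $R_u^{(G-v)}$ is well defined before you bound it), and in the divisibility lemma the quotient $\mu(T(G,u)-u,x)/\mu(G-u,x)$ is shown to be a polynomial by applying the inductive divisibility to a \emph{single} branch $T(G-u,v)$ of the split path tree, not to the whole product at once. You correctly flag the path-tree lemma as the delicate step; your fallback via interlacing would also work but, as you note, requires a common-interlacer argument to handle the sum $\sum_{u\sim v}\mu(G-u-v,x)$.
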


We will also use the following theorem of Darroch.

\begin{Lemma}[Darroch's rule \cite{dar}] Let $P(x)=\sum_{k=0}^na_kx^k$ be a polynomial with only positive coefficients and real zeros. If
$$k-\frac{1}{n-k+2}<\frac{P'(1)}{P(1)}<k+\frac{1}{k+2},$$
then $k$ is the unique number for which $a_k=\max(a_1,a_2,\dots, a_n)$. If, on the other hand,
$$k+\frac{1}{k+2}<\frac{P'(1)}{P(1)}<k+1-\frac{1}{n-k+1},$$
then either $a_k$ or $a_{k+1}$ is the maximal element of $a_1,a_2,\dots, a_n$.
\end{Lemma}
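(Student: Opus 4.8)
The plan is to pass from coefficients to probabilities. Since $P$ has positive coefficients and only real zeros, all its zeros are negative, so $P(x)=a_n\prod_{i=1}^n(x+r_i)$ with $r_i>0$. Setting $p_i=\frac{1}{1+r_i}\in(0,1)$ and normalizing gives
$$\frac{P(x)}{P(1)}=\prod_{i=1}^n\bigl((1-p_i)+p_ix\bigr)=\sum_{k=0}^n\frac{a_k}{P(1)}x^k,$$
which is exactly the probability generating function of $S=X_1+\dots+X_n$, a sum of independent Bernoulli variables with $\Pr(X_i=1)=p_i$. Hence $\frac{a_k}{P(1)}=\Pr(S=k)$ and, crucially,
$$\frac{P'(1)}{P(1)}=\sum_{i=1}^n p_i=\mathbb{E}[S].$$
Writing $\mu=\frac{P'(1)}{P(1)}$, the Lemma becomes precisely a statement locating the mode of the Poisson--binomial variable $S$ in terms of its mean $\mu$.

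First I would record unimodality. By Newton's inequalities (a consequence of real-rootedness) the normalized sequence $a_k/\binom nk$ is log-concave; since all $a_k>0$, the sequence $(a_k)$ is itself log-concave and unimodal, and the ratios $a_{k+1}/a_k$ are non-increasing. Consequently both halves of the Lemma follow from the two one-sided implications: \textbf{(i)} $\mu<k+\frac{1}{k+2}\Rightarrow a_k>a_{k+1}$, and \textbf{(ii)} $\mu>k-\frac{1}{n-k+2}\Rightarrow a_k>a_{k-1}$. Indeed, when $\mu$ lies in the first interval, (i) and (ii) give $a_{k-1}<a_k>a_{k+1}$, so unimodality forces $k$ to be the unique maximizer; when $\mu$ lies in the second interval, (ii) at $k$ gives $a_{k-1}<a_k$ while (i) applied at $k+1$ gives $a_{k+1}>a_{k+2}$, so the maximizer is $k$ or $k+1$.

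Next, (ii) is not independent: applying (i) to the reciprocal polynomial $x^nP(1/x)$, whose coefficients are $a_{n-j}$ and whose mean is $n-\mu$, sends the index $n-k$ to $k$ and recovers (ii) verbatim. So everything reduces to proving (i), which I would state contrapositively as
$$a_{k+1}\ge a_k\ \Longrightarrow\ \mu\ge k+\tfrac{1}{k+2},\qquad\text{i.e.}\qquad (k+2)\sum_{j=0}^n(j-k)\,\Pr(S=j)\ \ge\ 1.$$
To prove this I would exploit that $a_{k+1}\ge a_k$, through the monotonicity of the ratios $\frac{a_{j+1}}{a_j}\cdot\frac{j+1}{n-j}$, forces $(a_j)$ to increase up to index $k+1$ and to decay at least geometrically beyond it; summing the resulting tail bounds on $\sum_{j<k}(k-j)a_j$ and $\sum_{j>k}(j-k)a_j$ yields exactly the constant $\frac{1}{k+2}$. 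A clean sanity check that this constant is sharp is the genuine binomial case $p_i\equiv p$: there $a_{k+1}\ge a_k\Leftrightarrow\mu\ge k+1-\frac{k+1}{n+1}$, which is $\ge k+\frac{1}{k+2}$ precisely because $n\ge k+1$, with equality when $n=k+1$. An alternative is induction on $n$ via the deletion identity $(k+1)\Pr(S=k+1)=\sum_i p_i\Pr(S-X_i=k)$.

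The main obstacle is exactly this last step. A soft argument showing the mode lies within distance $1$ of the mean is easy, but extracting the \emph{sharp} thresholds $\frac{1}{k+2}$ and $\frac{1}{n-k+2}$ requires carefully controlling the tail sums through the log-concave ratio bounds (or tracking the induction) and verifying that the binomial configuration is extremal. Everything preceding it---the reduction to a Poisson--binomial mode estimate, unimodality, and the reciprocal symmetry collapsing (ii) into (i)---is routine.
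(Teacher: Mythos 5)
The paper does not actually prove this lemma: it is quoted verbatim from Darroch's 1964 paper \cite{dar} and used as a black box, so there is no in-paper argument to compare yours against. Judged on its own terms, your setup is correct and is essentially Darroch's own framing: the factorization $P(x)/P(1)=\prod_i((1-p_i)+p_ix)$ with $p_i=1/(1+r_i)$, the identification of $a_k/P(1)$ with $\Pr(S=k)$ for a Poisson--binomial $S$ and of $P'(1)/P(1)$ with $\mathbb{E}S$, the unimodality via Newton's inequalities, the reduction of both assertions to the single one-sided implication (i), and the reciprocal-polynomial symmetry turning (ii) into (i) are all sound, and your bookkeeping of which instance of (i)/(ii) is applied at which index checks out.

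The genuine gap is the step you yourself flag: the proof of (i), i.e.\ that $a_{k+1}\geq a_k$ forces $\mu\geq k+\frac{1}{k+2}$. This is not a routine tail estimate, and the sentence ``summing the resulting tail bounds \dots\ yields exactly the constant $\frac{1}{k+2}$'' is an assertion, not a derivation. Two concrete warnings. First, plain log-concavity of $(a_j)$ is insufficient: already for $k=1$, the naive bound $\mu-1\geq -q_0+\sum_{j\geq 2}q_j=1-2q_0-q_1$ combined with $q_1\geq 2q_0$ only gives $\mu-1\geq 1-2q_1\geq 0$, not $\geq\frac13$; one must carry the full binomial weights in Newton's inequality (e.g.\ for $n=2$ one needs $a_1^2\geq 4a_0a_2$, not just $a_1^2\geq a_0a_2$, to squeeze out $\mu\geq\frac43$), and the resulting telescoped products $\prod_{i=j}^{k-1}\frac{(n-k)(i+1)}{(k+1)(n-i)}$ have to be summed against the weights $(k-j)$ and played off against the normalization $\sum_jq_j=1$ --- a genuine optimization whose extremal configuration (your binomial with $n=k+1$, $p=\frac{k+1}{k+2}$) must be shown to be extremal, not merely exhibited. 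Second, the phrase ``decay at least geometrically beyond it'' is false in general: if the mode sits far above $k+1$ the sequence keeps increasing past $k+1$; this does not hurt the desired lower bound on $\mu$, but it signals that the tail analysis as described is not yet the correct one. Until inequality (i) with the sharp constant is actually established (by completing this computation or by the deletion-identity induction you mention), the proof is a correct reduction plus an unproven core, which is precisely the content of Darroch's theorem.
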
 

\begin{proof}[Proof of Proposition~\ref{asymp}]
\noindent (a)  Note that 
$$M(rG,t)=M(G,t)^r$$
implying that $p(rG,t)=p(G,t)$ and $\lambda_{rG}(p)=\lambda_G(p)$.
\medskip

\noindent (b) Since
$$\lambda_G(p)=\frac{\ln M(G,t)}{v(G)}-\frac{1}{2}p\cdot \ln(t)$$
we have
$$\frac{d\lambda_G(p)}{dp}=\left(\frac{1}{v(G)}\cdot \frac{\frac{d}{dt}M(G,t)}{M(G,t)}\cdot \frac{dt}{dp}-\frac{1}{2}\left(\ln(t)+p\cdot \frac{1}{t}\cdot \frac{dt}{dp}\right)\right)=-\frac{1}{2}\ln(t),$$
since 
$$\frac{1}{v(G)}\cdot \frac{\frac{d}{dt}M(G,t)}{M(G,t)}=\frac{p}{2t}$$
by definition.
\medskip

\noindent (c) From $\frac{d}{dp}\lambda_G(p)=-\frac{1}{2}\ln t(p)$ we see that for $p>p(G,1)$, the function $\lambda_G(p)$ is monotone decreasing. (We can also see that $\lambda_G(p)$ is a concave-down function.) Hence 
$$\lim_{p\nearrow p^*}\lambda_G(p)=\inf_{p>p(G,1)}\lambda_G(p).$$
\medskip

\noindent (d) First, let us assume that $k<\nu(G)$. In case of $k=\nu(G)$, we will slightly modify our argument. Let $t=t(G,p)$ be the value for which $p=p(G,t)$. The polynomial
$$P(G,x)=M(G,tx)=\sum_{j=0}^nm_j(G)t^jx^j$$
considered as a polynomial in variable $x$, has only real zeros by Theorem~\ref{Hei}. Note that
$$k=\frac{pv(G)}{2}=\frac{P'(G,1)}{P(G,1)}.$$
Darroch's rule says that in this case $m_k(G)t^k$ is the unique maximal element of the coefficient sequence of $P(G,x)$. In particular
$$\frac{M(G,t)}{v(G)}\leq m_k(G)t^k\leq M(G,t).$$
Hence
$$\lambda_G(p)-\frac{\ln v(G)}{v(G)}\leq \frac{\ln m_k(G)}{v(G)}\leq \lambda_G(p).$$
Hence in case of $k<\nu(G)$, we are done.
\medskip

If $k=\nu(G)$, then let $p$ be arbitrary such that
$$k-\frac{1}{2}<\frac{pv(G)}{2}<k.$$
Again we can argue by Darroch's rule as before that
$$\lambda_G(p)-\frac{\ln v(G)}{v(G)}\leq \frac{\ln m_k(G)}{v(G)}\leq \lambda_G(p).$$
Since this is true for all $p$ sufficiently close to $p^*=\frac{2\nu(G)}{v(G)}$ and 
$$\lambda_G(p^*)=\lim_{p\nearrow p^*}\lambda_G(p),$$
we have
$$\left|\lambda_G(p^*)-\frac{\ln m_k(G)}{v(G)}\right|\leq \frac{\ln v(G)}{v(G)}$$
in this case too.
\medskip

\noindent (e) By part (a) we have $\lambda_{rG}(p)=\lambda_G(p)$. Note also that if $k=\nu(G)$, then $m_{rk}(rG)=m_k(G)^r$. Applying the bound from part (d) to the graph $rG$, we obtain that
$$\left|\lambda_G(p^*)-\frac{\ln m_k(G)}{v(G)}\right|\leq \frac{\ln v(rG)}{v(rG)}.$$
Since 
$$\frac{\ln v(rG)}{v(rG)}\to 0$$
as $r\to \infty$, we get that
$$\lambda_G(p^*)=\frac{\ln m_k(G)}{v(G)}.$$
\medskip

\noindent (f) This is again a trivial consequence of $\lambda_{rG}(p)=\lambda_G(p)$.
\medskip

\noindent (g) From the assumption it follows that for the relative sizes of the largest matchings, we have $\frac{\nu(G_1)}{v(G_1)}\geq \frac{\nu(G_2)}{v(G_2)}$, and if $\frac{\nu(G_1)}{v(G_1)}=\frac{\nu(G_2)}{v(G_2)}$, then
$$\frac{\ln m_{\nu(G_1)}(G_1)}{v(G_1)}\geq \frac{\ln m_{\nu(G_2)}(G_2)}{v(G_2)}.$$
So the statement is trivial if $p\geq \frac{2\nu(G_2)}{v(G_2)}$. So we can assume that $0\leq p< \frac{2\nu(G_2)}{v(G_2)}$.
Let us consider the minimum of the function $\lambda_{G_1}(p)-\lambda_{G_2}(p)$ on the interval $[0,\frac{2\nu(G_2)}{v(G_2)}]$. This minimum is either attained at some endpoints or inside the interval at a point where the derivative is $0$. Note that $\lambda_{G_1}(0)=\lambda_{G_1}(0)=0$. According to the part (b), the derivative of $\lambda_{G_1}(p)-\lambda_{G_2}(p)$ is 
$$-\frac{1}{2}\ln t(G_1,p)+\frac{1}{2}\ln t(G_2,p).$$
If it is $0$ at $p_0$ then $t(G_1,p_0)=t(G_2,p_0)$, but then with the notation $t=t(G_1,p_0)=t(G_2,p_0)$ we have
$$\lambda_{G_1}(p_0)=\frac{\ln M(G_1,t)}{v(G_1)}-\frac{1}{2}p_0\ln(t)\geq \frac{\ln M(G_2,t)}{v(G_2)}-\frac{1}{2}p_0\ln(t)=\lambda_{G_2}(p_0).$$
So at every possible minimum of $\lambda_{G_1}(p)-\lambda_{G_2}(p)$, the function is non-negative. So it is non-negative everywhere.
\end{proof}

\subsection{Benjamini--Schramm convergence and the entropy function} \label{measure}

In this part we extend the definition of the function $\lambda_G(p)$ for infinite lattices $L$, more precisely to  certain \textit{random rooted graphs}.

\begin{Def} \label{BS-convergence} Let $L$ be a probability distribution on (infinite) rooted graphs; we will call $L$ a \emph{random rooted graph}.
For a finite rooted graph $\alpha$ and a positive integer $r$, let $\mathbb{P}(L,\alpha,r)$ be the probability that the $r$-ball
centered at a random root vertex chosen from the distribution $L$ is isomorphic to $\alpha$.

For a finite graph $G$, a finite rooted graph $\alpha$ and a positive integer
$r$, let $\mathbb{P}(G,\alpha,r)$ be the probability that the $r$-ball
centered at a uniform random vertex of $G$ is isomorphic to $\alpha$. 

We say that a sequence $(G_i)$ of bounded degree graphs is \emph{Benjamini--Schramm
convergent} if for all finite rooted graphs $\alpha$ and $r>0$, the
probabilities $\mathbb{P}(G_i,\alpha,r)$ converge. Furthermore, we say that \emph{$(G_i)$ Benjamini-Schramm converges to $L$},
if for all positive integers $r$ and finite rooted graphs $\alpha$, $\mathbb{P}(G_i,\alpha,r)\rightarrow \mathbb{P}(L,\alpha,r)$.
\end{Def}

Note that Benjamini--Schramm convergence is also called local convergence. This refers to the fact that the finite graphs $G_i$ look locally more and more like the infinite graph $L$.

\begin{Ex} \label{grid}  Let us consider a sequence of boxes in $\mathbb{Z}^d$ where all sides converge to infinity. This will be a  Benjamini--Schramm convergent graph sequence since for every fixed $r$, we will pick a vertex which at least $r$-far from the boundary with probability converging to $1$. For all these vertices we will see the same neighborhood. This also shows that we can impose arbitrary boundary condition, for instance periodic boundary condition means that we consider the sequence of toroidal boxes.  Boxes and toroidal boxes will be Benjamini--Schramm convergent even together, and converges to a distribution which is a rooted $\mathbb{Z}^d$ with probability $1$. 
\end{Ex}

\begin{Ex} \label{large-girth} Recall that a  $k$-cycle of a graph $H$ is a sequence of vertices $v_1,\dots,v_k$ such that $v_i\neq v_j$ if $i\neq j$, and $(v_i,v_{i+1})\in E(H)$ for $1\leq i\leq k$, where $v_{k+1}=v_1$. For a graph $H$, let $g(H)$ be the length of the shortest cycle in $H$, this is called the \emph{girth} of the graph. 

Let $(G_i)$ be a sequence of $d$--regular graphs such that $g(G_i)\to \infty$, then $(G_i)$ Benjamini--Schramm converges to the rooted $d$--regular infinite tree $\mathbb{T}_d$. Note that if in a finite graph $G$ the shortest cycle has length at least $2k+1$ then the $k$-neighborhood of any vertex looks like the $k$-neighborhood of any vertex of an infinite $d$--regular tree.

Let $(G_i)$ be a sequence of $(a,b)$-biregular graphs such that $g(G_i)\to \infty$, then $(G_i)$ Benjamini--Schramm converges to the following distribution: with probability $\frac{a}{a+b}$ it is the infinite $(a,b)$--biregular tree $\mathbb{T}_{a,b}$ with root vertex of degree $b$, and with probability $\frac{b}{a+b}$ it is the infinite $(a,b)$--biregular tree $\mathbb{T}_{a,b}$ with root vertex of degree $a$. With slight abuse of notation we will denote this random rooted tree with $\mathbb{T}_{a,b}$ as well.
\end{Ex}

\begin{Rem} Not every random rooted graph can be obtained as a limit of Benjamini--Schramm convergent finite graphs. A necessary condition is that the random rooted graph has to be \emph{unimodular}, this is a certain reversibility property of a random rooted graph. On the other hand, it is not known that every \emph{unimodular random graph} can be obtained as a limit of Benjamini--Schramm convergent finite graphs. This is the famous Aldous--Lyons problem. The interested Reader can consult with the book \cite{LL}.
\end{Rem} 

The following theorem was known in many cases for thermodynamic limit in statistical mechanics. We also note that a modification of  the algorithm `CountMATCHINGS' in \cite{BGK+} yields an alternative proof of part (a) and (b) of this theorem.

\begin{Th} \cite{ACH} \label{entropy} Let $(G_i)$ be a Benjamini--Schramm convergent graph sequence of bounded degree graphs. Then the sequences of functions\\
(a) $$p(G_i,t),$$
(b) $$\frac{\ln M(G_i,t)}{v(G_i)}$$
converge to strictly monotone increasing continuous functions on the interval $[0,\infty)$. \\
Let $p_0$ be real number between $0$ and $1$ such that $p^*(G_i)\geq p_0$ for all $n$. Then\\
(c) $$t(G_i,p),$$
(d) $$\lambda_{G_i}(p)$$
are convergent for all $0\leq p<p_0$.
\end{Th}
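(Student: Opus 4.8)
The plan is to establish the convergence of all four functions in Theorem~\ref{entropy} by exploiting the fact that Benjamini--Schramm convergence controls the local statistics of matchings. The crucial observation is that $M(G,t)$, $p(G,t)$, and their logarithmic normalizations are, up to lower-order terms, determined by the \emph{matching measure}, i.e.\ the distribution of zeros of the matching polynomial $\mu(G,x)$. First I would recall that since $\ln M(G_i,t) = \sum_k m_k(G_i)t^k$ and the coefficients $m_k(G)$ count local structures, the quantity $\frac{1}{v(G)}\ln M(G,t)$ can be expressed as an integral against the spectral measure of $\mu(G,x)$. Concretely, writing the zeros of $\mu(G_i,x)$ as $\pm x_1,\dots$, one has an identity of the form
\begin{equation}
\frac{\ln M(G_i,t)}{v(G_i)} = \frac{1}{2}\int \ln\bigl(1+t x^2\bigr)\, d\rho_{G_i}(x) \;+\; (\text{boundary correction}),
\end{equation}
where $\rho_{G_i}$ is the uniform measure on the roots of $\mu(G_i,x)$. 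By Theorem~\ref{Hei} all these roots are real and confined to a fixed compact interval $[-2\sqrt{D-1},2\sqrt{D-1}]$ depending only on the degree bound, so the integrand is a bounded continuous function of $x$ for each fixed $t$.

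The key step is to argue that Benjamini--Schramm convergence of $(G_i)$ forces weak convergence of the measures $\rho_{G_i}$. This follows because the moments $\int x^{2\ell}\,d\rho_{G_i}(x)$ equal the normalized traces $\frac{1}{v(G_i)}\mathrm{tr}(A^{2\ell})$ of powers of a suitable (adjacency-type) operator, and these traces count closed walks of bounded length rooted at a vertex. Such walk counts are exactly the kind of local quantity that $\mathbb{P}(G_i,\alpha,r)$ determines, so each moment converges as $i\to\infty$. Since the measures $\rho_{G_i}$ are all supported in a common compact interval, convergence of all moments yields weak convergence to a limit measure $\rho_L$. Feeding this into the integral representation above, and using that $\ln(1+tx^2)$ is bounded and continuous on the support, gives convergence of $\frac{\ln M(G_i,t)}{v(G_i)}$ pointwise in $t$; this settles part~(b). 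Part~(a) then follows by differentiating: $p(G_i,t)$ is a normalized logarithmic derivative of $M(G_i,t)$ in $t$, which can likewise be written as an integral $\int \frac{t x^2}{1+t x^2}\,d\rho_{G_i}(x)$ against the same measures, so the same weak-convergence argument applies. Strict monotonicity and continuity of the limits are inherited from the fact that each integrand is strictly increasing and continuous in $t$, and the limit measure $\rho_L$ is not a point mass at the origin.

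For parts~(c) and~(d), I would pass from $M$ and $p$ to their derived quantities by inversion. Since each $p(G_i,\cdot)$ is a strictly increasing continuous bijection onto $[0,p^*(G_i))$ and these converge pointwise to a strictly increasing continuous limit, their inverse functions $t(G_i,p)$ converge pointwise on $[0,p_0)$ for any $p_0$ with $p^*(G_i)\geq p_0$; this is a standard fact about convergence of inverses of monotone functions, and it gives~(c). Finally, $\lambda_{G_i}(p) = F(G_i,t(G_i,p)) = \frac{\ln M(G_i,t(G_i,p))}{v(G_i)} - \frac{1}{2}p\ln t(G_i,p)$ is a continuous combination of the quantities already shown to converge, so~(d) follows immediately.

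The main obstacle I anticipate is \emph{not} the weak convergence of the measures, which is a clean moment argument, but rather the uniformity needed to justify the interchange of limits near the boundary and to control the inversion uniformly enough to conclude~(c) and~(d). Specifically, the integral representation in~(b) may involve a boundary correction term (coming from the constant and leading behavior of $\ln M$), and one must check this contributes negligibly after normalization by $v(G_i)$; and for the inversion step one must ensure the convergence $p(G_i,t)\to p_L(t)$ is locally uniform rather than merely pointwise, so that preimages of a fixed $p$ converge. Both issues are handled by the compactness of the root support together with the uniform strict monotonicity (a uniform lower bound on the derivative $\partial_t p(G_i,t)$ on compact $t$-intervals), which prevents the inverse functions from degenerating.
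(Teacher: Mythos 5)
Your proposal is correct and follows essentially the same route as the paper: the exact factorization $M(G,t)=\prod_{\lambda}(1+\lambda^2t)^{1/2}$ over the matching-polynomial roots (so the boundary correction you worry about is in fact absent), weak convergence of the matching measures via convergence of moments, the integral representations of $p(G_i,t)$ and $\frac{\ln M(G_i,t)}{v(G_i)}$ against these measures, inversion of the monotone limits for (c), and equicontinuity from the compact root support for (d). The one imprecision is that the moments of the root measure of $\mu(G,x)$ count closed \emph{tree-like} walks (Godsil) rather than traces of the adjacency matrix, but these are equally local quantities, so your argument goes through unchanged.
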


\begin{Rem}  We mention that H. Nguyen and K. Onak \cite{ngu}, and independently G. Elek and G. Lippner \cite{ele} proved that for a Benjamini--Schramm convergent graph sequence $(G_i)$, the following limit exits:
$$\lim_{n\to \infty}p^*(G_i).$$
(Recall that $p^*(G_i)=\frac{2\nu(G_i)}{v(G_i)}$.)
\end{Rem} 

To prove Theorem~\ref{entropy}, we need some preparation. We essentially repeat the argument of the paper \cite{ACFK}.

The following theorem deals with the behavior of the matching polynomial in
Benjamini--Schramm convergent graph sequences. The matching measure was introduced in the paper \cite{ACFK}:

\begin{Def} The \emph{matching measure} of a finite graph is defined as
$$\rho_G=\frac{1}{v(G)}\sum_{z_i:\ \mu(G,z_i)=0}\delta(z_i),$$
where $\delta(s)$ is the Dirac-delta measure on $s$, and we take every $z_i$ into account with its multiplicity.
\end{Def}

In other words, the matching measure  is the probability measure of uniform
distribution on the zeros of $\mu(G,x)$.

\begin{Th}[\cite{ACFK,ACH}] \label{wc} Let $(G_i)$ be  a Benjamini--Schramm convergent bounded
degree graph sequence. Let $\rho_{G_i}$ be the matching measure of the graph $G_i$. Then the sequence $(\rho_{G_i})$ is
weakly convergent, i. e., there exists some measure $\rho_{L}$ such that for every bounded continuous function $f$, we have
$$\lim_{i\to \infty} \int f(z)\, d\rho_{G_i}(z)=\int f(z)\, d\rho_{L}(z).$$
\end{Th}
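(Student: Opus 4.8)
The goal is to show that if $(G_i)$ is Benjamini--Schramm convergent, then the matching measures $\rho_{G_i}$ converge weakly. The natural strategy is the \emph{method of moments}: since all the zeros of $\mu(G_i,x)$ lie in the fixed compact interval $[-2\sqrt{D-1},2\sqrt{D-1}]$ by the Heilmann--Lieb theorem (Theorem~\ref{Hei}), where $D$ is the uniform degree bound, the measures $\rho_{G_i}$ are all supported in one fixed compact set. On a compact set, weak convergence of probability measures is equivalent to convergence of all moments $\int z^k\,d\rho_{G_i}(z)$, and moreover a sequence of probability measures on a fixed compact interval is automatically tight, so convergence of every moment yields a genuine limit measure $\rho_L$ to which the sequence converges weakly. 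Thus the whole problem reduces to showing that for each fixed $k$ the $k$-th moment sequence converges.

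\medskip
\noindent\textbf{Reducing moments to local combinatorial counts.}
The key computation is to express $\int z^k\,d\rho_{G_i}(z)$ combinatorially. By definition of $\rho_G$ as the uniform measure on the roots of $\mu(G,x)$, the $k$-th moment equals $\frac{1}{v(G)}\sum_i z_i^k = \frac{1}{v(G)}p_k$, where $p_k$ is the $k$-th power-sum of the roots of $\mu(G,x)$. Power sums are polynomials in the elementary symmetric functions of the roots, i.e.\ in the coefficients of $\mu(G,x)$, which are (up to sign) the matching numbers $m_j(G)$. The crucial structural fact I would invoke is that these power sums have a clean combinatorial meaning: $p_k$ counts closed walks of length $k$ weighted by matching-type data, or more precisely, by a standard trace/recursion argument the quantity $\sum_i z_i^k$ can be written as a sum over vertices $v$ of a quantity that depends only on a bounded-radius neighborhood of $v$ (a tree-like local computation, since the matching polynomial's moments localize). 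Concretely, I expect $\frac{1}{v(G)}\sum_i z_i^k = \frac{1}{v(G)}\sum_{v\in V(G)} \phi_k(B_r(v))$, where $\phi_k$ depends only on the isomorphism type of the ball $B_r(v)$ of radius $r=r(k)$ around $v$.

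\medskip
\noindent\textbf{Passing to the limit via Benjamini--Schramm convergence.}
Once the $k$-th moment is written as a vertex-average of a bounded, radius-$r$-local functional $\phi_k$, Benjamini--Schramm convergence does exactly what it is designed to do. Grouping vertices by the isomorphism type $\alpha$ of their $r$-ball,
$$\frac{1}{v(G_i)}\sum_{v}\phi_k(B_r(v)) = \sum_{\alpha}\mathbb{P}(G_i,\alpha,r)\,\phi_k(\alpha),$$
a finite sum over the (finitely many, by bounded degree) possible radius-$r$ balls $\alpha$. Since $\mathbb{P}(G_i,\alpha,r)\to \mathbb{P}(L,\alpha,r)$ by Definition~\ref{BS-convergence} and each $\phi_k(\alpha)$ is a fixed constant, each moment converges to $\sum_\alpha \mathbb{P}(L,\alpha,r)\phi_k(\alpha)$. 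This holds for every $k$, so by the method of moments the measures converge weakly to a limit $\rho_L$, and the limiting moments depend only on $L$ as required.

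\medskip
\noindent\textbf{The main obstacle.}
The delicate step is establishing that the $k$-th moment is genuinely a bounded-radius-local vertex functional. The cleanest route is via the theory of the \emph{path-tree} of Godsil: the matching polynomial $\mu(G,x)$ divides the characteristic polynomial of the path-tree rooted at any vertex, and there is an identity expressing $\frac{1}{v(G)}\mathrm{Tr}(\text{something}) = \frac{1}{v(G)}\sum_v (\text{local tree walk count})$. Making precise that ``walks of length $k$ in the relevant weighted structure stay within radius $k$ of their basepoint,'' and that the per-vertex contribution depends only on the $k$-ball, is where the real work lies; everything after that is a formal application of the method of moments together with the Heilmann--Lieb compactness bound. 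I would therefore spend the bulk of the argument on the combinatorial/spectral identity for the power sums of the matching-polynomial roots, and treat the moment-convergence machinery as routine.
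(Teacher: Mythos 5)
Your proposal follows essentially the same route as the paper's proof: the paper expresses the $k$-th moment as $\frac{p_k(G)}{v(G)}=\sum_{\alpha\in\mathcal{N}_k}\mathbb{P}(G,\alpha,k)\cdot TW(\alpha)$ using precisely the fact you identify as the crux --- that the power sums of the roots of $\mu(G,x)$ count closed \emph{tree-like} walks (Godsil, Algebraic Combinatorics, Ch.~6), which are radius-$k$-local --- and then concludes weak convergence from moment convergence via polynomial approximation on the compact interval $[-2\sqrt{D-1},2\sqrt{D-1}]$ supplied by Heilmann--Lieb. The only difference is one of emphasis: the step you flag as the main obstacle is simply cited in the paper rather than re-proved.
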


\begin{Rem} This theorem was first proved in \cite{ACFK}. The proof given there relied on a general result on graph polynomials given in  the paper \cite{csi}. To make this paper as self-contained as possible we sketch here a slightly different proof outlined in a remark of \cite{ACFK}.
\end{Rem}

\begin{proof} For a graph $G$ let $S(G)$ denote the multiset of zeros of the matching polynomial $\mu(G,x)$, and
$$p_k(G)=\sum_{\lambda\in S(G)}\lambda^k.$$
Then $p_k(G)/v(G)$ can be rewritten in terms of the measure $\rho_{G}$ as follows:
$$\frac{p_k(G)}{v(G)}=\int z^k\, d\rho_{G}(z).$$
It is known that $p_k(G)$ counts the number of
closed tree-like walks of length $k$ in the graph $G$: see chapter 6 of
\cite{god3}.  Without  going into the details of the description of `tree-like walks'; we only use the fact that
these are special type of walks, consequently we can count them
by knowing all $k$-balls centered at the vertices of the graph $G$.
Let $TW(\alpha)$ denote the number of closed tree-like walks starting at the root of $\alpha$, and let $\mathcal{N}_k$ denote the set of $k$-neighborhoods $\alpha$. The size of $\mathcal{N}_k$ is bounded by a function of $k$ and the largest degree of the graph $G$.
Furthermore, let $N_k(G,\alpha)$ denote the number of vertices of $G$ for which the $k$-neighborhood of the vertex is isomorphic to $\alpha$. Then
$$p_k(G)=\sum_{\alpha\in \mathcal{N}_k}N_k(G,\alpha)\cdot TW(\alpha).$$
Therefore
$$\frac{p_k(G)}{v(G)}=\sum_{\alpha\in \mathcal{N}_k}\\\mathbb{P}(G,\alpha,k)\cdot TW(\alpha).$$
Hence, if $(G_i)$ is Benjamini--Schramm convergent then for every fixed $k$, the sequence
$$\frac{p_k(G_i)}{v(G_i)}=\int z^k\, d\rho_{G_i}(z)$$
is convergent. Clearly, this implies that for every polynomial $q(z)$, the sequence
$$\int q(z)\, d\rho_{G_i}(z)$$
is convergent.

Assume that $D$ is a general upper bound for all degrees of all graphs $G_i$. Then all zeros of $\mu(G_i,x)$ lie in the interval $[-2\sqrt{D-1},2\sqrt{D-1}]$. Since every continuous function can be uniformly approximated by a polynomial on a bounded interval, we obtain that the sequence $(\rho_{G_i})$ is weakly convergent.
\end{proof}

Now we are ready to prove Theorem~\ref{entropy}.

\begin{proof}[Proof of Theorem~\ref{entropy}] First we prove part (a) and (b). For a graph $G$ let $S(G)$ denote the set of zeros of the matching polynomial $\mu(G,x)$, then
$$M(G,t)=\prod_{\lambda\in S(G) \atop \lambda>0}(1+\lambda^2t)=\prod_{\lambda\in S(G)}(1+\lambda^2t)^{1/2}.$$
Then
$$\ln M(G,t)=\sum_{\lambda\in S(G)}\frac{1}{2}\ln\left(1+\lambda^2t\right).$$
By differentiating both sides with respect to $t$ we get that
$$\frac{\frac{d}{dt}M(G,t)}{M(G,t)}=\sum_{\lambda\in S(G)}\frac{1}{2}\frac{\lambda^2}{1+\lambda^2t}.$$
Hence
$$p(G,t)=\frac{2t\cdot \frac{d}{dt}M(G,t)}{v(G)\cdot  M(G,t)}=\frac{1}{v(G)}\sum_{\lambda\in S(G)}\frac{\lambda^2 t}{1+\lambda^2t}=\int \frac{tz^2}{1+tz^2}\, d\rho_{G}(z).$$
Similarly,
$$\frac{\ln M(G,t)}{v(G)}=\frac{1}{v(G)}\sum_{\lambda\in S(G)}\frac{1}{2}\ln\left(1+\lambda^2t\right)=\int \frac{1}{2}\ln\left(1+tz^2\right)\, d\rho_{G}(z).$$
Since $(G_i)$ is Benjamini--Schramm convergent bounded degree graph sequence, the sequence $(\rho_{G_i})$ 
weakly converges to some $\rho_L$ by Theorem~\ref{wc}. Since both functions 
$$\frac{tz^2}{1+tz^2}\ \ \ \ \mbox{and}\ \ \ \  \frac{1}{2}\ln\left(1+tz^2\right)$$
are continuous, we immediately obtain that
$$\lim_{n\to \infty}p(G_i,t)=\int \frac{tz^2}{1+tz^2} \, d\rho_L(z),$$
and 
$$\lim_{n\to \infty}\frac{\ln M(G_i,t)}{v(G_i)}=\int \frac{1}{2}\ln\left(1+tz^2\right)\, d\rho_L(z).$$
Note that both functions
$$\frac{tz^2}{1+tz^2}\ \ \ \ \mbox{and}\ \ \ \  \frac{1}{2}\ln\left(1+tz^2\right)$$
are strictly monotone increasing continuous functions in the variable $t$. Thus their integrals are also strictly monotone increasing continuous functions.
\bigskip

To prove part (c), let us introduce the function
$$p(L,t)=\int \frac{tz^2}{1+tz^2} \, d\rho_L(z).$$
We have seen that $p(L,t)$ is a strictly monotone increasing continuous function, and $\lim_{i\to \infty}p(G_i,t)=p(L,t)$. 
Since for all $G_i$, $p^*(G_i)\geq p_0$, we have $\lim_{t \to \infty}p(G_i,t)\geq p_0$ for all $i$. This means that $\lim_{t \to \infty}p(L,t)\geq p_0$.
Hence we can consider the inverse function of $p(L,t)$ which maps $[0,p_0)$ into $[0,\infty)$, let us call it $t(L,p)$.
We show that 
$$\lim_{i\to \infty}t(G_i,p)=t(L,p)$$
pointwise for $p<p_0$. Assume by contradiction that this is not the case. This means that for some $p_1$, there exists an $\varepsilon$ and an infinite sequence $n_i$ for which
$$\left|t(L,p_1)-t(G_{n_i},p_1)\right|\geq \varepsilon.$$
We distinguish two cases according to \\
(i) there exists an infinite sequence $(n_i)$ for which
$$t(G_{n_i},p_1)\geq t(L,p_1)+\varepsilon,$$
or  
(ii) there exists an infinite sequence $(n_i)$ for which
$$t(G_{n_i},p_1)\leq t(L,p_1)-\varepsilon.$$
In the first case, let $t_1=t(L,p_1)$, $t_2=t_1+\varepsilon$ and $p_2=p(L,t_2)$. Clearly, $p_2>p_1$. Note that
$$t(G_{n_i},p_1)\geq t(L,p_1)+\varepsilon=t_2$$
and $p(G_{n_i},t)$ are monotone increasing functions, thus 
$$p(G_{n_i},t_2)\leq p(G_{n_i},t(G_{n_i},p_1))=p_1=p_2-(p_2-p_1)=p(L,t_2)-(p_2-p_1).$$
This contradicts the fact that 
$$\lim_{n_i\to \infty}p(G_{n_i},t_2)=p(L,t_2).$$
In the second case, let $t_1=t(L,p_1)$, $t_2=t_1-\varepsilon$ and $p_2=p(L,t_2)$. Clearly, $p_2<p_1$. Note that
$$t(G_{n_i},p_1)\leq t(L,p_1)-\varepsilon=t_2$$
and $p(G_{n_i},t)$ are monotone increasing functions, thus 
$$p(G_{n_i},t_2)\geq p(G_{n_i},t(G_{n_i},p_1))=p_1=p_2+(p_1-p_2)=p(L,t_2)+(p_1-p_2).$$
This again contradicts the fact that $$\lim_{n\to \infty}p(G_{n_i},t_2)=p(L,t_2).$$
Hence $\lim_{i\to \infty}t(G_i,p)=t(L,p)$. 
\medskip

Finally, we show that $\lambda_{G_i}(p)$ converges for all $p$. Let $t=t(L,p)$, and 
$$\lambda_L(p)=\lim_{i\to \infty}\frac{\ln M(G_i,t)}{v(G_i)}-\frac{1}{2}p\ln(t).$$
Note that
$$\lambda_{G_i}(p)=\frac{\ln M(G_i,t_i)}{v(G_i)}-\frac{1}{2}p\ln(t_i),$$
where $t_i=t(G_i,p)$. We have seen that $\lim_{i\to \infty}t_i=t$. Hence it is enough to prove that the functions
$$\frac{\ln M(G_i,u)}{v(G_i)}$$
are equicontinuous. Let us fix some positive $u_0$ and let
$$R(u_0,u)=\max_{z\in [-2\sqrt{D-1},2\sqrt{D-1}]}\left|\frac{1}{2}\ln\left(1+u_0z^2\right)-\frac{1}{2}\ln\left(1+uz^2\right)\right|.$$
Clearly, if $|u-u_0|\leq \delta$ for some sufficiently small $\delta$, then $R(u_0,u)\leq \varepsilon$, and
$$\left|\frac{\ln M(G_i,u)}{v(G_i)}-\frac{\ln M(G_i,u_0)}{v(G_i)}\right|=\left|\int \frac{1}{2}\ln\left(1+u_0z^2\right)\, d\rho_{G_i}(z)-\int \frac{1}{2}\ln\left(1+uz^2\right)\, d\rho_{G_i}(z)\right|\leq$$
$$\leq \int \left|\frac{1}{2}\ln\left(1+u_0z^2\right)-\frac{1}{2}\ln\left(1+uz^2\right)\right|\, d\rho_{G_i}(z)\leq \int R(u,u_0)\, d\rho_{G_i}(z)\leq \varepsilon.$$
This completes the proof of the convergence of $\lambda_{G_i}(p)$.
\end{proof}

Now it is easy to define these functions for those random rooted graphs which can be obtained as a Benjamini--Schramm limit of finite graphs.

\begin{Def} Let $L$ be a random rooted graph which can be obtained as a limit of Benjamini--Schramm limit of finite graphs $(G_i)$ of bounded degree. Assume that $p^*(G_i)\geq p_0$ for all $n$. Let
$$p(L,t)=\lim_{n\to \infty}p(G_i,t),$$
$$F(L,t)=\lim_{n\to \infty} \frac{\ln M(G_i,t)}{v(G_i)}$$
for all $t\geq 0$, and 
$$t(L,p)=\lim_{n\to \infty}t(G_i,p),$$
$$\lambda_L(p)=\lim_{n\to \infty}\lambda_{G_i}(p)$$
for all $p<p_0$. Finally, let
$$\lambda_L(p_0)=\lim_{p\nearrow p_0}\lambda_L(p).$$
\end{Def}

Note that the functions $p(L,t),F(L,t),t(L,p)$ and $\lambda_L(p)$ are well-defined in the sense that if the sequences $(G_i)$ and $(H_i)$ both Benjamini--Schramm converge to $L$ such that $p^*(G_i),p^*(H_i)\geq p_0$ for all $i$, then they define the same functions on $[0,\infty)$ or $[0,p_0]$. Indeed, we can consider the two sequences together and apply Theorem~\ref{entropy} to obtain that the limits do not depend on the choice of the sequence. From the proof of Theorem~\ref{entropy}, we also see that $p(L,t)$ and $F(L,t)$ can be expressed as integrals along a certain measure $\rho_L$.

\subsection{Entropy and density function of the infinite $d$--regular tree $\mathbb{T}_d$} 

In this section we give the entropy and density functions of the $d$--regular and $(a,b)$-biregular trees. 

\begin{Th} \label{tree} Let $\mathbb{T}_d$ be the infinite $d$--regular tree. Then \\
(a) $$p(\mathbb{T}_d,t)=\frac{2d^2t+d-d\cdot\sqrt{1+4(d-1)t}}{2d^2t+2}.$$
(b) $$\int \frac{1}{2}\ln (1+tz^2)\, d \rho_{\mathbb{T}_d}(z)=\frac{1}{2}\ln S_d(t),$$
where 
$$S_d(t)=\frac{1}{\eta_t^2}\left(\frac{d-1}{d-\eta_t}\right)^{d-2},$$
where 
$$\eta_t=\frac{\sqrt{1+4(d-1)t}-1}{2(d-1)t}.$$
(c) $$t(\mathbb{T}_d,p)=\frac{p(d-p)}{d^2(1-p)^2}.$$
(d) $$\lambda_{\mathbb{T}_d}(p)=\G_d(p)=\frac{1}{2}\left(p\ln \left(\frac{d}{p}\right)+(d-p)\ln \left(1-\frac{p}{d}\right)-2(1-p)\ln (1-p)\right).$$
\end{Th}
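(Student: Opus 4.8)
The plan is to compute the matching measure $\rho_{\mathbb{T}_d}$ of the infinite $d$--regular tree, and then simply evaluate the integrals that Theorem~\ref{entropy} and the formulas in the proof of Theorem~\ref{entropy} express $p(\mathbb{T}_d,t)$ and $F(\mathbb{T}_d,t)$ in terms of. By Example~\ref{large-girth}, any sequence of $d$--regular graphs with girth tending to infinity Benjamini--Schramm converges to $\mathbb{T}_d$, so all four functions are well-defined and can be accessed through $\rho_{\mathbb{T}_d}$. The density of $\rho_{\mathbb{T}_d}$ is the classical Kesten--McKay measure, supported on $[-2\sqrt{d-1},2\sqrt{d-1}]$ with density proportional to $\frac{d\sqrt{4(d-1)-x^2}}{2\pi(d^2-x^2)}$; this is exactly the spectral/matching measure of the $d$--regular tree. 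So the first step is to record this measure (or to re-derive it quickly via the recursion that the matching polynomials of the balls $B_r$ of $\mathbb{T}_d$ satisfy, or via the moment computation $\int z^k\,d\rho_{\mathbb{T}_d}(z)=\lim_i p_k(G_i)/v(G_i)$ counting closed tree-like walks).

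Once the measure is in hand, I would prove part (a) by evaluating $p(\mathbb{T}_d,t)=\int \frac{tz^2}{1+tz^2}\,d\rho_{\mathbb{T}_d}(z)$. Rather than integrate directly, the cleaner route is the Stieltjes/resolvent transform: the generating function of even moments of the Kesten--McKay measure has a known closed form, and $\int \frac{tz^2}{1+tz^2}\,d\rho = 1 - \int\frac{1}{1+tz^2}\,d\rho$, where the latter integral is a value of the Stieltjes transform at an imaginary point. Carrying this out should yield exactly
$$p(\mathbb{T}_d,t)=\frac{2d^2t+d-d\sqrt{1+4(d-1)t}}{2d^2t+2}.$$
For part (c), I would simply invert the expression in (a): set $p=p(\mathbb{T}_d,t)$ and solve for $t$ as a function of $p$. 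This is an algebraic manipulation — isolate the square root, square, and simplify — and it should collapse to $t(\mathbb{T}_d,p)=\frac{p(d-p)}{d^2(1-p)^2}$. A sanity check is that $p\nearrow 1$ forces $t\to\infty$, consistent with $p^*(\mathbb{T}_d)=1$.

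For part (b) I would integrate $\frac12\ln(1+tz^2)$ against $\rho_{\mathbb{T}_d}$ to get $\frac12\ln S_d(t)$. The most reliable approach is to differentiate in $t$: since $\frac{d}{dt}\int\frac12\ln(1+tz^2)\,d\rho = \frac{1}{2t}\int\frac{tz^2}{1+tz^2}\,d\rho = \frac{p(\mathbb{T}_d,t)}{2t}$, I can recover $F(\mathbb{T}_d,t)$ by integrating the already-computed density function in (a), fixing the constant by the value at $t=0$ (where the integral is $0$). Substituting $\eta_t=\frac{\sqrt{1+4(d-1)t}-1}{2(d-1)t}$ should let the antiderivative be written in the stated closed form $S_d(t)=\eta_t^{-2}\big(\frac{d-1}{d-\eta_t}\big)^{d-2}$. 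Finally, part (d) follows by assembling the pieces: by the definition of the entropy function, $\lambda_{\mathbb{T}_d}(p)=F(\mathbb{T}_d,t(p))-\frac12 p\ln t(p)$, and substituting the expressions for $S_d(t)$, $\eta_t$, and $t(\mathbb{T}_d,p)$ from (b) and (c) — noting that at $t=t(\mathbb{T}_d,p)$ one has the clean relation $\eta_t=\frac{d(1-p)}{d-p}$ — should reduce the whole expression to $\G_d(p)$ after simplification. The main obstacle I anticipate is part (b): getting the $\ln$-integral into the precise algebraic form involving $\eta_t$ and the exponent $d-2$ requires careful bookkeeping of the antiderivative and the correct change of variables, whereas (a), (c), and (d) are comparatively mechanical once the measure and (a) are established.
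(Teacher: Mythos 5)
Your proposal is correct and follows essentially the same route as the paper: identify $\rho_{\mathbb{T}_d}$ with the Kesten--McKay measure (matching measure $=$ spectral measure on a tree), compute $p(\mathbb{T}_d,t)=\int\frac{tz^2}{1+tz^2}\,d\rho_{\mathbb{T}_d}(z)$ from it, and then obtain (b)--(d) by inversion, integration in $t$, and the identity $\lambda_{\mathbb{T}_d}(p)=F(\mathbb{T}_d,t(p))-\tfrac12 p\ln t(p)$. The only cosmetic difference is that the paper sidesteps the explicit Stieltjes-transform integration by expanding $p(\mathbb{T}_d,t)$ as the moment series $\sum_j(-1)^{j+1}t^j\int u^{2j}\,d\rho$, summing it via a closed-walk generating-function recursion, and extending by analyticity, and it leaves the (b)--(d) manipulations to the reader as routine computations.
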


\begin{Th} \label{tree2} Let $\mathbb{T}_{a,b}$ be the infinite $(a,b)$-biregular tree. Then for $0\leq p\leq \min(\frac{2a}{a+b},\frac{2b}{a+b})$ we have \\
(a) $$p(\mathbb{T}_{a,b},t)=\frac{2abt+\frac{2ab}{a+b}-\frac{2ab}{a+b}\sqrt{1+(2a+2b-4)t+(a-b)^2t^2}}{2abt+2}.$$
(b) $$t(\mathbb{T}_{a,b},p)=\frac{a+b}{2ab}\frac{p\left(1-\frac{a+b}{2ab}p\right)}{\left(1-\frac{a+b}{2a}p\right)\left(1-\frac{a+b}{2b}p\right)}.$$
(c) $$\lambda_{\mathbb{T}_{a,b}}(p)=\G_{a,b}(p)=\frac{a}{a+b}H\left(\frac{a+b}{2a}p\right)+\frac{b}{a+b}H\left(\frac{a+b}{2b}p\right)+\frac{1}{2}p\ln (ab)-\frac{ab}{a+b}H\left(\frac{a+b}{2ab}p\right),$$
where $H(q)=-(q\ln (q)+(1-q)\ln (1-q))$. 
\end{Th}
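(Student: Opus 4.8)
The plan is to exploit the self-similar structure of $\mathbb{T}_{a,b}$, reducing all three parts to a pair of fixed-point equations together with one calculus identity. By the definition of the limiting functions given after Theorem~\ref{entropy}, it suffices to work with the matching measure $\rho_{\mathbb{T}_{a,b}}$, since $p(\mathbb{T}_{a,b},t)=\int\frac{tz^2}{1+tz^2}\,d\rho_{\mathbb{T}_{a,b}}(z)$ and $F(\mathbb{T}_{a,b},t)=\int\frac12\ln(1+tz^2)\,d\rho_{\mathbb{T}_{a,b}}(z)$; and by the moment description in the proof of Theorem~\ref{wc}, $\int z^k\,d\rho_{\mathbb{T}_{a,b}}$ counts expected closed tree-like walks from the root, so $\rho_{\mathbb{T}_{a,b}}$ is the root-type averaged spectral measure of the tree. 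I would encode this through the cavity recursion for the monomer--dimer model: for a rooted subtree let $x_v=A_v/M_v$ be the ratio of matchings leaving $v$ uncovered to all matchings of the subtree below $v$; expanding over whether $v$ is matched to a child gives $x_v=\bigl(1+t\sum_{c}x_c\bigr)^{-1}$, the sum over children $c$ of $v$. On $\mathbb{T}_{a,b}$ the messages take two values according as the sending vertex has degree $a$ or $b$ (hence $a-1$ or $b-1$ children):
\begin{equation*}
x_A=\frac{1}{1+t(a-1)x_B},\qquad x_B=\frac{1}{1+t(b-1)x_A}.
\end{equation*}
(This is the real form, under $z=i/\sqrt t$, of the Green's-function self-consistency $w_A=(z-(a-1)w_B)^{-1}$, which is what identifies the fixed point with $\rho_{\mathbb{T}_{a,b}}$.) Eliminating one variable yields a quadratic for $P=x_Ax_B$ whose discriminant is exactly $1+(2a+2b-4)t+(a-b)^2t^2$, the expression under the root in part (a). The probability that a degree-$a$ (resp.\ degree-$b$) root is left uncovered equals $(1+ta\,x_B)^{-1}$ (resp.\ $(1+tb\,x_A)^{-1}$); averaging the covered fractions $p_A=1-(1+ta\,x_B)^{-1}$ and $p_B=1-(1+tb\,x_A)^{-1}$ over the root distribution (weights $\tfrac{b}{a+b}$ and $\tfrac{a}{a+b}$) and simplifying gives the closed form in (a).

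For part (b) I would invert by parametrizing through $p_A,p_B$. The edge-balance identity (each matched edge covers one vertex on each side) forces $b\,p_A=a\,p_B$, which together with $p=\tfrac{b}{a+b}p_A+\tfrac{a}{a+b}p_B$ yields $p_A=\tfrac{a+b}{2b}p$ and $p_B=\tfrac{a+b}{2a}p$. Solving the cavity relations for $t$ gives $x_A=\frac{a(1-p_A)}{a-p_A}$ and hence $t=\frac{p_B(a-p_A)}{ab(1-p_A)(1-p_B)}$; substituting the expressions for $p_A,p_B$ produces precisely the formula in (b).

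For part (c) I would avoid computing $F$ directly and instead use Proposition~\ref{asymp}(b), whose identity $\frac{d}{dp}\lambda_{\mathbb{T}_{a,b}}(p)=-\tfrac12\ln t(\mathbb{T}_{a,b},p)$ passes to the limit $L=\mathbb{T}_{a,b}$ along the approximating large-girth sequence. Since $\lambda_{\mathbb{T}_{a,b}}(0)=\G_{a,b}(0)=0$, it then suffices to verify $\G_{a,b}'(p)=-\tfrac12\ln t(\mathbb{T}_{a,b},p)$. Using $H'(q)=\ln\frac{1-q}{q}$ and the abbreviation $w=\tfrac{a+b}{2ab}p$ (so $p_A=aw$, $p_B=bw$), the four terms of $\G_{a,b}$ combine to $\G_{a,b}'(p)=\tfrac12\ln\frac{(1-p_A)(1-p_B)}{w(1-w)}$, which equals $-\tfrac12\ln t$ because the formula in (b) rewrites as $t=\frac{w(1-w)}{(1-p_A)(1-p_B)}$.

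I expect the genuine obstacle to be the first, conceptual step rather than the algebra: rigorously identifying the naive tree recursion with the Benjamini--Schramm limit quantities, i.e.\ that $\rho_{\mathbb{T}_{a,b}}$ really is the tree's spectral measure and that $(x_A,x_B)$ is the admissible positive (stable) branch of the fixed-point equations whose associated Green's function has the correct boundary behaviour. Once this is secured via the tree-like-walk moment description of $\rho_L$ already available from Theorem~\ref{wc}, the remaining content is the quadratic solve for (a), an inversion for (b), and the derivative check for (c), all of which are routine.
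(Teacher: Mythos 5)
Your proposal is correct and reaches all three formulas, but its computational core differs from the paper's. The paper first identifies $\rho_{\mathbb{T}_{a,b}}$ with the spectral measure of the tree (on a tree, closed walks and closed tree-like walks coincide), then computes the generating functions $G_a,G_b$ of closed walks at the two root types by decomposing walks into excursions, obtaining $p(\mathbb{T}_{a,b},-z^2)=1-\bigl(\tfrac{b}{a+b}G_a(z)+\tfrac{a}{a+b}G_b(z)\bigr)$ as a convergent series for small $|t|$ and extending to all $t\geq 0$ by analytic continuation. Your cavity recursion is the same fixed point in different clothing: the paper's excursion generating functions satisfy $F_a=(1-(a-1)z^2F_b)^{-1}$, which is exactly your $x_A=(1+t(a-1)x_B)^{-1}$ under $t=-z^2$, and both arguments hinge on the same branch selection for the resulting quadratic. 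What your version buys is a cleaner derivation of (b): the edge-balance identity $b\,p_A=a\,p_B$ (which, as a consistency check, also follows algebraically from the two fixed-point equations, since they force $x_B-x_A=t(a-b)x_Ax_B$) gives the parametrization $p_A=\tfrac{a+b}{2b}p$, $p_B=\tfrac{a+b}{2a}p$ directly, whereas the paper simply inverts the formula of (a); part (c) is handled identically in both proofs (derivative check against $-\tfrac12\ln t$ plus vanishing at $p=0$, the paper leaving the computation to the reader). What the paper's version buys is that the identification of the fixed point with the Benjamini--Schramm limit is automatic, since the moment/power-series computation is unconditional; your real-$t$ cavity argument still owes the justification that $(x_A,x_B)$ is the branch corresponding to $\rho_{\mathbb{T}_{a,b}}$. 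You correctly flag this as the one nontrivial step, and the tree-like-walk moment description you invoke does suffice to close it --- either by verifying that the Stieltjes transform of the spectral measure satisfies your recursion and is its unique admissible (Herglotz) solution, or more simply by running your recursion as a formal power series in $t$, which is precisely what the paper does.
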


There are two essentially different proofs for Theorem~\ref{tree} and \ref{tree2}. We detail the first proof, and in the next subsection we sketch a second incomplete one.

The first proof of Theorem~\ref{tree} and \ref{tree2}  roughly follows the arguments of Section 4 of \cite{ACFK}. For an (infinite)   tree, the spectral measure and the matching measure coincide. This can be proved via Lemma 4.2 of \cite{ACFK}, or an even simpler proof is that for trees, the number of closed walks and the number of closed tree-like walks are the same, so the moment sequences of the spectral measure and the matching measure coincide and since they are supported on a bounded interval, they must be the same measure. For the $d$--regular tree $\mathbb{T}_d$, this is the Kesten-McKay measure given by the density function
$$f_d(x)=\frac{d \sqrt{4(d-1)-x^2}}{2\pi (d^2-x^2)}\chi_{[-2\sqrt{d-1},2\sqrt{d-1}]}.$$
For the $(a,b)$--biregular infinite tree, the matching or spectral measure $\rho_{\mathbb{T}_{a,b}}$ is given by
$$d\rho_{\mathbb{T}_{a,b}}=\frac{|a-b|}{a+b}\delta_0+\frac{ab\sqrt{-(x^2-ab+(s-1)^2)(x^2-ab+(s+1)^2)}}{\pi (a+b)(ab-x^2)|x|}\chi_{\{|\sqrt{a-1}-\sqrt{b-1}|\leq |x|\leq \sqrt{a-1}+\sqrt{b-1}\}}d\, x,$$
where $s=\sqrt{(a-1)(b-1)}$. As a next step one might try to compute the integral of the functions
$$\frac{tz^2}{1+tz^2}\ \ \ \ \mbox{and}\ \ \ \  \frac{1}{2}\ln\left(1+tz^2\right)$$
to obtain $p(\mathbb{T}_{a,b},t)$ and $F(\mathbb{T}_{a,b},t)$. We will slightly modify this argument to simplify it. Our modification yields that we do not need to compute these integrals, we can work directly with the moment sequences which are simply the number of closed walks in the corresponding trees. More precisely, in $\mathbb{T}_{a,b}$ we have to weight the number of closed walks starting and ending at a root vertex of degree $a$ with weight $\frac{b}{a+b}$, and the  number of closed walks starting and ending at a root vertex of degree $b$ with weight $\frac{a}{a+b}$.

First of all, we need the following lemma on the number of closed walks of $\mathbb{T}_{a,b}$. The current author is completely sure that it is well-known, but since we were not able to find any reference, we give its proof.

\begin{Lemma} Let $W_j^a$ and $W_j^b$ be the number of closed walks of length $j$ starting and returning to a root vertex of $\mathbb{T}_{a,b}$ of degree $a$, and of degree $b$, respectively. Then the generating function
$$G_a(z):=\sum_{j=0}^{\infty}W_j^az^j=\frac{1}{1-az^2F_b(z)},$$
where 
$$F_b(z)=\frac{1+(b-a)z^2-\sqrt{1-(2a+2b-4)z^2+(b-a)^2z^4}}{2(a-1)z^2}.$$
Similarly,
$$G_b(z):=\sum_{j=0}^{\infty}W_j^bz^j=\frac{1}{1-bz^2F_a(z)},$$
where 
$$F_a(z)=\frac{1+(a-b)z^2-\sqrt{1-(2a+2b-4)z^2+(b-a)^2z^4}}{2(b-1)z^2}.$$
\end{Lemma}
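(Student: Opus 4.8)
The plan is to exploit the self-similar structure of $\mathbb{T}_{a,b}$ together with the classical first-return decomposition of closed walks. The key auxiliary objects are two ``branch'' generating functions. Fix a vertex $v$ and delete the edge joining it to its parent; this splits off a rooted subtree $T_v$. Let $F_a(z)=\sum_{j}c_jz^j$, where $c_j$ is the number of closed walks of length $j$ based at $v$ that remain inside $T_v$, in the case $\deg v=a$; define $F_b(z)$ analogously when $\deg v=b$. Because $\mathbb{T}_{a,b}$ is bipartite and biregular, a subtree rooted at a degree-$a$ vertex has exactly $a-1$ children, all of degree $b$, and symmetrically with $a$ and $b$ interchanged. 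This off-by-one is precisely what will distinguish the branch functions from the root functions $G_a,G_b$.

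First I would record the first-return decomposition. A closed walk based at a vertex $w$ is either empty, or it takes a first step into the subtree of some neighbor $u$, returns to $w$ for the first time, and then continues as an arbitrary closed walk at $w$. Between the first and last step the walk is a closed walk at $u$ that never revisits $w$, i.e. a walk counted by the branch function at $u$, while the two boundary steps contribute a factor $z^2$. Summing over the neighbors of $w$ and iterating (a ``sequence'' construction) gives
\[
F_a(z)=\frac{1}{1-(a-1)z^2F_b(z)},\qquad F_b(z)=\frac{1}{1-(b-1)z^2F_a(z)}
\]
for the branch functions, and
\[
G_a(z)=\frac{1}{1-a\,z^2F_b(z)},\qquad G_b(z)=\frac{1}{1-b\,z^2F_a(z)}
\]
for the root functions, the only change being that a root of degree $a$ (resp. $b$) has $a$ (resp. $b$) neighbors rather than $a-1$ (resp. $b-1$). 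These are identities of formal power series, which is legitimate since each coefficient counts a finite set of walks; bounding the number of length-$j$ walks from a fixed vertex by $\max(a,b)^{j}$ shows the series also converge for small $|z|$, so the manipulations are valid analytically near $0$.

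Next I would solve the $2\times 2$ system. Eliminating $F_a$ from the two branch equations (using $(b-1)(F_a-1)=(a-1)(F_b-1)$, which follows by equating the two expressions for $F_aF_b$) collapses everything to a single quadratic,
\[
(a-1)z^2F_b(z)^2-\bigl(1+(a-b)z^2\bigr)F_b(z)+1=0,
\]
and symmetrically for $F_a$ with $a,b$ interchanged. Solving the quadratic and selecting the branch of the square root analytic at $z=0$ produces the closed form claimed in the statement; a short expansion shows the discriminant equals $1-(2a+2b-4)z^2+(a-b)^2z^4$. Substituting the resulting $F_a,F_b$ back into the root formulas yields $G_a$ and $G_b$.

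The difficulty here is not any single hard step but rather two bookkeeping points that must be handled with care. The first is the degree count, $a-1$ versus $a$, which has to be placed exactly right so that the branch and root recursions differ only where they should. The second, which I expect to be the main obstacle, is the choice of root of the quadratic: both algebraic solutions satisfy the defining relation, but only one has a power-series expansion with value $1$ at $z=0$ (forced by the empty walk) and nonnegative integer coefficients, and this is what pins down the sign in front of the square root. As a safeguard I would expand to order $z^4$ and check against a direct count — for a degree-$a$ branch the coefficient of $z^2$ must be $a-1$ and that of $z^4$ must be $(a-1)(a+b-2)$, counting the length-$2$ and length-$4$ closed walks — which simultaneously verifies the recursion and the branch selection.
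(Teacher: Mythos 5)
Your proposal is correct and follows essentially the same route as the paper: a first-return (sequence) decomposition relating the root generating functions $G_a,G_b$ to the branch functions $F_a,F_b$ of the subtrees with root degree reduced by one, followed by solving the resulting quadratic and selecting the branch analytic at $z=0$ via the value $1$ of the empty walk. Your order-$z^2$ check (coefficient $a-1$ for a degree-$a$ branch) in fact confirms that the correct closed forms are the ones with $1+(b-a)z^2$ in the numerator of $F_a$ and $1+(a-b)z^2$ in that of $F_b$, i.e.\ the versions appearing in the paper's proof body rather than the sign-swapped ones printed in the lemma statement.
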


\begin{proof} Let us consider the rooted tree $\mathbb{T}^a_{a,b}$, where the only difference compared to $\mathbb{T}_{a,b}$ that the root vertex has degree $a-1$ and not $a$. Similarly, let the rooted tree $\mathbb{T}^b_{a,b}$ be the tree where the only difference compared to $\mathbb{T}_{a,b}$ that the root vertex has degree $b-1$ and not $b$. Let $\overline{W}_j^a$ be the number of closed walks of length $j$ starting and returning to the root vertex $\mathbb{T}^a_{a,b}$. Furthermore, let  $\overline{U}_j^a$ be the number of closed walks of length $j$ starting and returning to the root vertex $\mathbb{T}^a_{a,b}$ such that the walk only visits the root at the beginning and at the end, and the walk has length at least $2$, so it is different form the empty walk. We can similarly define
$\overline{W}_j^b$ and $\overline{U}_j^b$.
Let
$$F_a(z)=\sum_{j=0}^{\infty}\overline{W}_j^az^j\ \ \mbox{and},\  \ F_b(z)=\sum_{j=0}^{\infty}\overline{W}_j^bz^j,$$
and
$$R_a(z)=\sum_{j=1}^{\infty}\overline{U}_j^az^j\ \  \mbox{and},\  \ R_b(z)=\sum_{j=1}^{\infty}\overline{U}_j^bz^j.$$
First of all,
$$F_a(z)=1+R_a(z)+R_a(z)^2+R_a(z)^3+\dots=\frac{1}{1-R_a(z)},$$
since every closed walk can be decomposed uniquely to walks which visit the root only at the beginning and at the end.
Similarly,
$$F_b(z)=\frac{1}{1-R_b(z)}.$$
Finally,
$$R_a(z)=(a-1)z^2F_b(z)\ \ \mbox{and similarly},\ \ R_b(z)=(b-1)z^2F_a(z),$$
since every closed walk which visits the root only at the beginning and at the end can be decomposed in the following way: we erase the first and last step (we can choose this $a-1$ different ways in $\mathbb{T}^a_{a,b}$), and we get a closed walk in $\mathbb{T}^b_{a,b}$.
Solving these equation we get that
$$F_a(z)=\frac{1+(b-a)z^2-\sqrt{1-(2a+2b-4)z^2+(b-a)^2z^4}}{2(b-1)z^2},$$
$$F_b(z)=\frac{1+(a-b)z^2-\sqrt{1-(2a+2b-4)z^2+(b-a)^2z^4}}{2(a-1)z^2},$$
$$R_a(z)=\frac{1}{2}\left(1+(a-b)z^2-\sqrt{1-(2a+2b-4)z^2+(b-a)^2z^4}\right),$$
$$R_b(z)=\frac{1}{2}\left(1+(b-a)z^2-\sqrt{1-(2a+2b-4)z^2+(b-a)^2z^4}\right).$$
(Note that at some point, we have to solve a quadratic equation, and we can choose only the minus sign, because of the evaluation of the generating function at the value $z=0$.)
\medskip

Now let us turn back to the original problem. Let $U_j^a$ be the number of closed walks of length $j$ starting and returning to the root vertex $\mathbb{T}_{a,b}$ of degree $a$ such that the walk only visits the root at the beginning and at the end, and the walk has length at least $2$, so it is different form the empty walk. We define $U_j^b$ similarly. Let
$$G_a(z)=\sum_{j=0}^{\infty}W_j^az^j\ \ \mbox{and},\ \ G_b(z)=\sum_{j=0}^{\infty}W_j^bz^j,$$
and
$$H_a(z)=\sum_{j=1}^{\infty}U_j^az^j\ \ \mbox{and},\ \ H_b(z)=\sum_{j=1}^{\infty}U_j^bz^j.$$
As before,
$$G_a(z)=\frac{1}{1-H_a(z)}\ \ \mbox{and},\ \ G_b(z)=\frac{1}{1-H_b(z)}.$$
Finally,
$$H_a(z)=az^2F_b(z)\ \ \mbox{and similarly},\ \ R_b(z)=bz^2F_a(z)$$
since every closed walk which visits the root only at the beginning and at the end can be decomposed in the following way: we erase the first and last step (we can choose this $a$ different ways in $\mathbb{T}_{a,b}$), and we get a closed walk in $\mathbb{T}^b_{a,b}$.
Hence
$$G_a(z)=\frac{1}{1-az^2F_b(z)}\ \ \mbox{and},\ \ G_b(z)=\frac{1}{1-bz^2F_a(z)}.$$
\end{proof}

\begin{proof}[Proof of Theorem~\ref{tree} and \ref{tree2}.] Since Theorem~\ref{tree} is a special case of Theorem~\ref{tree2}, we will concentrate on the proof of the latter theorem. We only need to work with part (a), since then part (b) follows immediately and part (c) follows from part (b) using
$$\frac{d}{dp}\lambda_{\mathbb{T}_{a,b}}(p)=-\frac{1}{2}\ln t(\mathbb{T}_{a,b},p).$$
These are routine computations which we left to the Reader.

To prove part (a), first let us assume that $|t|<\frac{1}{4(\max(a,b)-1)}$. Note that for such a $t$, all subsequent series are converging. Then
$$p(\mathbb{T}_{a,b},t)=\int \frac{tu^2}{1+tu^2}\, d\rho_{\mathbb{T}_{a,b}}(u)=\int \sum_{j=1}^{\infty}(-1)^{j+1}t^ju^{2j}\, d\rho_{\mathbb{T}_{a,b}}(u)=$$
$$=\sum_{j=1}^{\infty}(-1)^{j+1}t^j\int u^{2j}\, d\rho_{\mathbb{T}_{a,b}}(u).$$
Note that 
$$\int u^{2j}\, d\rho_{\mathbb{T}_{a,b}}(u)=\frac{b}{a+b}W_{2j}^a+\frac{a}{a+b}W_{2j}^b.$$
Hence
$$p(\mathbb{T}_{a,b},-z^2)=1-\left(\frac{b}{a+b}G_a(z)+\frac{a}{a+b}G_b(z)\right).$$
After some calculation we get that
$$p(\mathbb{T}_{a,b},t)=\frac{2abt+\frac{2ab}{a+b}-\frac{2ab}{a+b}\sqrt{1+(2a+2b-4)t+(a-b)^2t^2}}{2abt+2}$$
for $|t|<\frac{1}{4(\max(a,b)-1)}$. On the other hand, both functions appearing in the previous equation are holomorhic in the region $\{t\ |\ |\Im (t)|\leq \Re (t)\}$, so they must be the same everywhere in this region.
\end{proof}

\subsection{Random graphs.} The goal of this subsection is twofold. On the one hand, we show that Theorem~\ref{main} and \ref{biregular} are quite precise, for instance if $p$ is separated away from $1$ then Theorem~\ref{main} is the best possible up to a constant factor. On the other hand, we also would like to show a connection between random 
(bi)regular random graphs and the entropy function of an infinite (bi)regular tree.
\medskip

An alternative way to obtain Theorem~\ref{tree} and \ref{tree2} is the following. We can use Theorem~\ref{entropy} to obtain the required functions by choosing an appropriate Benjamini--Schramm convergent graph sequence. It turns out that it is sufficient to consider random $d$--regular or $(a,b)$-biregular bipartite graphs. Indeed, one can compute the expected number of $k$-matchings of a random $d$--regular or $(a,b)$-biregular bipartite graphs quite easily. Such a computation was carried out in \cite{BM,FKM,sch2,GJR} for $d$--regular bipartite graphs and it easily generalizes to  $(a,b)$-biregular bipartite graphs. We also note that a random $(a,b)$-biregular bipartite graph contains very small number of short cycles. This is a classical result for random regular graphs, but it is also known for biregular bipartite graphs \cite{DJ}.

First of all, let us specify which biregular random graph model we use. Let the vertex set of the random graph be $V\cup W$, where $V=\{v_1,\dots ,v_{an}\}$ and $W=\{w_1,\dots ,w_{bn}\}$. Let us consider two random partition of the set $\{1,\dots ,abn\}$, the first one $P_1=\{A_1,\dots ,A_{an}\}$ where each set has size $b$, and a second one $P_2=\{B_1,\dots ,B_{bn}\}$ where each set has size $a$. Then for every $k\in \{1,\dots ,abn\}$ connect 
$v_i$ and $w_j$ if $k\in A_i\cap B_j$. This is the configuration model. Note that this model allows multiple edges, but it is not a problem for us. In the special case when $a=b=d$ we can choose $V$ and $W$ to be of size $n$. The following theorem was proved in \cite{FKM}.

\begin{Th} \cite{FKM} \label{random2} Let $G$ be chosen from the set of labelled $d$--regular bipartite graphs on $v(G)=2n$ vertices according to the configuration model. Then
$$\mathbb{E}m_k(G)={n \choose k}^2d^{2k}\frac{1}{{dn \choose k}}.$$
\end{Th}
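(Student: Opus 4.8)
The plan is to compute the expectation by linearity over all \emph{potential} $k$-matchings, using the clean probabilistic structure of the configuration model. It is convenient to describe the model in its equivalent stub (half-edge) form: give each of the $n$ vertices in $V$ and each of the $n$ vertices in $W$ exactly $d$ stubs, label the $dn$ stubs on each side, and let $\sigma$ be a uniformly random bijection from the left stubs to the right stubs; the edge set of $G$ is then $\{(s,\sigma(s))\}$, and two stubs in the same vertex-block correspond to the same endpoint. This is distributionally identical to the two-partition description given before the statement, but it makes the relevant probabilities transparent: for any fixed set of $k$ (left stub, right stub) pairs sitting in $k$ distinct left-blocks and $k$ distinct right-blocks, the probability that $\sigma$ simultaneously realizes all $k$ of them is
$$\Pr[\sigma(s_i)=t_i,\ i=1,\dots,k]=\frac{(dn-k)!}{(dn)!},$$
since we are prescribing the images of $k$ of the $dn$ stubs under a uniform bijection.

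Next I would parametrize the potential $k$-matchings. A $k$-matching of $G$ is a set of $k$ pairwise vertex-disjoint edges, and in the stub model each such set is encoded uniquely by choosing its $k$ left endpoints, a stub at each of them, its $k$ right endpoints, a stub at each of them, and a bijection pairing the chosen left stubs with the chosen right stubs. The number of these encodings is
$$\binom{n}{k}\,d^{k}\cdot \binom{n}{k}\,d^{k}\cdot k!=\binom{n}{k}^2 d^{2k}\,k!,$$
where the two binomial-times-$d^k$ factors count the choice of endpoints together with a stub at each, and the $k!$ counts the pairing. The disjointness condition (distinct left-blocks and distinct right-blocks) is automatically enforced by choosing $k$ distinct vertices on each side.

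Combining the two ingredients by linearity of expectation gives
$$\mathbb{E}\,m_k(G)=\binom{n}{k}^2 d^{2k}\,k!\cdot\frac{(dn-k)!}{(dn)!}=\binom{n}{k}^2 d^{2k}\,\frac{1}{\binom{dn}{k}},$$
which is the claimed identity. The only genuinely delicate point, and the step I expect to require the most care, is the bookkeeping around the multigraph nature of the configuration model: I must check that distinct choices of stubs give distinct edges (so parallel edges are counted with the correct multiplicity) and that the encoding of an unordered $k$-matching is exactly one-to-one, so that no overcounting or undercounting occurs. Everything else — the probability $\tfrac{(dn-k)!}{(dn)!}$ and the final factorial simplification using $\binom{n}{k}\tfrac{n!}{(n-k)!}=k!\binom{n}{k}^2$ — is routine. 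As a consistency check one can instead fix the left partition and compute, via a random right partition, the probability $d^{k}\tfrac{n!}{(n-k)!}\tfrac{(dn-k)!}{(dn)!}$ that $k$ prescribed labels lying in distinct left-blocks also fall in distinct right-blocks; summing over the $\binom{n}{k}d^{k}$ admissible label sets reproduces the same formula.
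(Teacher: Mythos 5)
Your computation is correct and is essentially the argument the paper gives (for the more general biregular case in Lemma~5.2(a), which specializes to this statement at $a=b=d$): both proofs apply linearity of expectation over the $\binom{n}{k}^2 d^{2k} k!$ candidate $k$-matchings in the configuration model, your per-matching probability $\frac{(dn-k)!}{(dn)!}$ being exactly the paper's ratio $V_k/N$ expressed in the equivalent stub-bijection formulation. No gap; the one-to-one encoding of matchings by admissible stub-pair sets that you flag as the delicate point does hold, since each multigraph edge is a single stub pair and vertex-disjointness forces distinct blocks on each side.
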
 

The corollary of this theorem is the second part of Theorem~\ref{main}.

\begin{Cor} Let $p=\frac{k}{n}$. There exists a $d$--regular bipartite graph $G$ on $2n$ vertices such that
$$m_k(G)\leq \sqrt{\frac{1-p/d}{1-p}}\cdot p_{\mu}\cdot \exp(2n \G_d(p)).$$
\end{Cor}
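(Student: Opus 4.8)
The plan is to use the first moment method together with the exact expectation furnished by Theorem~\ref{random2}. Since $\mathbb{E}\,m_k(G)$ is an average over the configuration model, there must exist a graph $G$ in this model with $m_k(G)\le \mathbb{E}\,m_k(G)$; as remarked after the description of the model, the fact that the configuration model may produce multiple edges does not cause any trouble here. Hence it suffices to prove the purely deterministic inequality
$$\mathbb{E}\,m_k(G)=\binom{n}{k}^2\frac{d^{2k}}{\binom{dn}{k}}\le \sqrt{\frac{1-p/d}{1-p}}\cdot p_{\mu}\exp(2n\G_d(p)),$$
where $p=k/n$. I would treat the range $0\le k<n$, so that $1-p>0$ and the right-hand side is meaningful; the boundary case $k=0$ is a trivial equality.

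The next step is a bookkeeping reduction. Using the identity $p_{\mu}^2\exp(2n\G_d(p))=\binom{n}{k}^2\left(1-\tfrac{p}{d}\right)^{n(d-p)}(dp)^{np}$ recorded before Theorem~\ref{main}, together with $p_{\mu}=\binom{n}{k}p^k(1-p)^{n-k}$ and $k=np$, the target inequality becomes equivalent to $X\le \sqrt{(1-p/d)/(1-p)}$, where
$$X:=\frac{\mathbb{E}\,m_k(G)}{p_{\mu}\exp(2n\G_d(p))}=\frac{\binom{n}{k}\,d^{k}(1-p)^{n-k}}{\binom{dn}{k}\,\left(1-\tfrac{p}{d}\right)^{n(d-p)}}.$$
Now I would write $\binom{n}{k}/\binom{dn}{k}=\dfrac{n!\,(dn-k)!}{(n-k)!\,(dn)!}$ and substitute Stirling's formula in the sharp form $m!=\sqrt{2\pi m}\,(m/e)^m e^{\lambda_m}$ for each of the four factorials. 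A direct check — which I expect to be purely routine once the substitutions $n-k=n(1-p)$ and $dn-k=n(d-p)$ are made — shows that all the $(m/e)^m$ contributions, together with the extra factors $d^k(1-p)^{n-k}(1-p/d)^{-n(d-p)}$, cancel to $1$, while the four $\sqrt{2\pi m}$ factors collapse to exactly $\sqrt{(dn-k)/(d(n-k))}=\sqrt{(1-p/d)/(1-p)}$. This yields the clean identity
$$X=\sqrt{\frac{1-p/d}{1-p}}\,\cdot\,e^{\lambda_n+\lambda_{dn-k}-\lambda_{n-k}-\lambda_{dn}}.$$

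It then remains to prove that the exponent is nonpositive, i.e. $\lambda_n+\lambda_{dn-k}\le \lambda_{n-k}+\lambda_{dn}$, and this is the only genuinely delicate point. The four arguments are integers, all at least $1$ since $k<n$, and they satisfy $n-k\le n\le dn-k\le dn$ with $n+(dn-k)=(n-k)+dn$. The key input I would invoke is the classical fact that the Stirling remainder $m\mapsto\lambda_m$ extends to a convex function on $(0,\infty)$, which follows from Binet's integral representation exhibiting it as completely monotone. For a convex $f$ the increment $x\mapsto f(x)-f(x-k)$ is nondecreasing, so from $dn\ge n$ one gets $\lambda_{dn}-\lambda_{dn-k}\ge \lambda_n-\lambda_{n-k}$, which rearranges to the desired inequality. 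Hence $X\le\sqrt{(1-p/d)/(1-p)}$, and combining this with $m_k(G)\le\mathbb{E}\,m_k(G)$ for the graph produced by the first moment method completes the argument. I would emphasize that convexity is essential here: the naive two-sided Robbins bounds $\tfrac{1}{12m+1}<\lambda_m<\tfrac{1}{12m}$ do \emph{not} settle the comparison, since for $k$ near $0$ the inequality is almost tight (both sides agree when $k=0$) and a crude term-by-term estimate points the wrong way. The majorization/convexity route is what makes the constant come out exactly as $\sqrt{(1-p/d)/(1-p)}$.
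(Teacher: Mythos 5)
Your argument is correct and, up to the very last step, is the paper's own proof: the first-moment reduction via Theorem~\ref{random2}, the algebraic reduction of the target to a bound on $\binom{n}{k}d^{k}(1-p)^{n-k}\big/\bigl(\binom{dn}{k}(1-p/d)^{n(d-p)}\bigr)$, and the Stirling substitution that isolates the factor $\sqrt{(1-p/d)/(1-p)}$ times $e^{\Theta_n+\Theta_{dn-k}-\Theta_{n-k}-\Theta_{dn}}$ (the paper writes $\Theta_m$ for your $\lambda_m$) all appear verbatim in the paper. Where you genuinely diverge is in proving $\Theta_n+\Theta_{dn-k}\le\Theta_{n-k}+\Theta_{dn}$: you invoke convexity of the Binet remainder, while the paper simply inserts Robbins' bounds $\frac{1}{12m+1}\le\Theta_m\le\frac{1}{12m}$ term by term and checks that
$$\frac{1}{12n}-\frac{1}{12(n-k)+1}-\frac{1}{12dn+1}+\frac{1}{12(dn-k)}=\frac{-(12k-1)}{12n(12(n-k)+1)}+\frac{12k+1}{(12dn+1)\cdot 12(dn-k)}\le 0$$
for $d\ge 2$. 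Your side remark that the Robbins bounds ``do not settle the comparison'' is therefore mistaken: for every $k\ge 1$ one has $(12k+1)/(12k-1)\le 13/11$, while (using $d\ge2$ and $k\le n$, so $dn-k\ge n$ and $12dn+1\ge \frac{25}{13}(12(n-k)+1)$) the second denominator exceeds the first by a factor larger than $13/11$, so the displayed sum is indeed nonpositive; the only point where this term-by-term estimate fails is $k=0$, where the corollary is the trivial identity $1\le 1$. That said, your convexity route is a clean alternative: complete monotonicity of Binet's integral makes $m\mapsto\Theta_m$ convex, the increment $f(x)-f(x-k)$ of a convex $f$ is nondecreasing, and the remainder inequality follows at once for all $0\le k\le n$ and all $d\ge1$, with equality at $k=0$. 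Both routes produce the same sharp constant $\sqrt{(1-p/d)/(1-p)}$, and both (like the paper) quietly pass over the fact that the configuration model may output a multigraph, so I will not hold that against you.
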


\begin{proof} We will show that
$$E={n \choose k}^2d^{2k}\frac{1}{{dn \choose k}}\leq \sqrt{\frac{1-p/d}{1-p}}\cdot p_{\mu}\cdot \exp(2n \G_d(p)).$$
Note that
$$E={n \choose k}^2d^{2k}\frac{1}{{dn \choose k}}=\frac{{n \choose k}}{{dn \choose k}}d^{2k}{n \choose k}=\frac{{n \choose k}}{{dn \choose k}}d^{2k} \cdot \frac{p_{\mu}}{p^k(1-p)^{n-k}}.$$
For the first term we use Stirling's formula.
Let $\Theta_m$ be defined by the following form of  Stirling's formula: 
$$m!=\sqrt{2\pi m} \left(\frac{m}{e}\right)^m e^{\Theta_m}.$$
It is known (see \cite{rob}) that
$$\frac{1}{12m+1}\leq \Theta_m\leq \frac{1}{12m}.$$
Then
$$E=\frac{1}{\sqrt{2\pi \frac{k(n-k)}{n}}}e^{\Theta_n-\Theta_k-\Theta_{n-k}}\sqrt{2\pi \frac{k(dn-k)}{dn}}e^{-\Theta_{dn}+\Theta_k+\Theta_{dn-k}}\cdot p_{\mu}\cdot \exp(2n \G_d(p))=$$
$$=\sqrt{\frac{1-p/d}{1-p}}e^{\Theta_n-\Theta_{n-k}-\Theta_{dn}+\Theta_{dn-k}}\cdot p_{\mu}\cdot \exp(2n \G_d(p)).$$
Thus we only need to show that
$$\Theta_n-\Theta_{n-k}-\Theta_{dn}+\Theta_{dn-k}\leq 0.$$
This is indeed true:
$$\Theta_n-\Theta_{n-k}-\Theta_{dn}+\Theta_{dn-k}\leq \frac{1}{12n}-\frac{1}{12(n-k)+1}-\frac{1}{12dn+1}+\frac{1}{12(dn-k)}=$$
$$=\frac{-(12k-1)}{12n(12(n-k)+1)}+\frac{12k+1}{(12dn+1)(12(dn-k)+1)}\leq 0$$
if $d\geq 2$.
\end{proof}

The following lemma is a straightforward extension of the previous results to $(a,b)$--biregular bipartite graphs.

\begin{Lemma} \label{random} Let $G$ be chosen from the set of labelled $(a,b)$--biregular bipartite graphs on $v(G)=(a+b)n$ vertices according to the configuration model.
\medskip

\noindent (a) Then
$$\mathbb{E}m_k(G)=\exp(v(G)(\G_{a,b}(p)+o_{v(G)}(1))),$$
where $p=2k/v(G)$. 
\medskip

\noindent (b) \cite{DJ} Let $c_{2j}(G)$ be the number of $2j$-cycles in the graph $G$. Then
$$\mathbb{E}c_{2j}(G)=\frac{((a-1)(b-1))^{j}}{2j}(1+o_{v(G)}(1)).$$
\end{Lemma}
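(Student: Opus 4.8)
The plan is to handle the two parts separately. Part (b) records a classical fact about the configuration model---that the number of short cycles behaves asymptotically like independent Poisson variables with the stated means---and is precisely the content of the cited result of \cite{DJ}; I would simply invoke that reference and concentrate all the work on part (a). For part (a) the strategy is to first write down an \emph{exact} closed formula for $\mathbb{E}m_k(G)$ by a direct count in the configuration model, generalizing Theorem~\ref{random2}, and then feed that formula into Stirling's approximation.

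For the exact computation I would use the stub (half-edge) description of the model: every $v_i\in V$ carries $b$ stubs and every $w_j\in W$ carries $a$ stubs, so there are $abn$ stubs on each side and the graph arises from a uniformly random perfect matching between them. By linearity of expectation, $\mathbb{E}m_k(G)$ equals the number of \emph{potential} $k$-matchings times the probability that a fixed one is realized. A potential $k$-matching is specified by choosing $k$ vertices of $V$, $k$ vertices of $W$, a bijection between them, and one stub at each of the $2k$ chosen vertices, giving $\binom{an}{k}\binom{bn}{k}k!\,a^kb^k$ choices. Since the $k$ prescribed stub-pairs are pairwise disjoint, the probability that all of them lie in the random stub-matching is $(abn-k)!/(abn)!$. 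Multiplying and using $k!\,(abn-k)!/(abn)!=1/\binom{abn}{k}$ gives
$$\mathbb{E}m_k(G)=\binom{an}{k}\binom{bn}{k}(ab)^k\frac{1}{\binom{abn}{k}},$$
which is the natural biregular analogue of Theorem~\ref{random2} (one checks that the same count with $|V|=|W|=n$ and degree $d$ reproduces that formula exactly). The configuration model produces multigraphs, but this only affects the count through multiple edges, whose contribution is negligible by part (b); in any case the displayed quantity is exactly what we need.

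The asymptotic analysis is then routine. Writing $H(q)=-q\ln q-(1-q)\ln(1-q)$ and using $\ln\binom{N}{k}=N\,H(k/N)+O(\ln N)$, I would take logarithms, divide by $v(G)=(a+b)n$, and substitute $k=\tfrac12 p(a+b)n$. The three binomial arguments become $\frac{k}{an}=\frac{a+b}{2a}p$, $\frac{k}{bn}=\frac{a+b}{2b}p$ and $\frac{k}{abn}=\frac{a+b}{2ab}p$, while the prefactors $\frac{an}{(a+b)n}$, $\frac{bn}{(a+b)n}$, $\frac{abn}{(a+b)n}$ collapse to $\frac{a}{a+b}$, $\frac{b}{a+b}$, $\frac{ab}{a+b}$, and the middle term $k\ln(ab)$ contributes $\frac12 p\ln(ab)$. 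Comparing with the definition, the limit is exactly $\G_{a,b}(p)$, while all the $O(\ln N)$ Stirling errors become $O(\ln n/n)=o_{v(G)}(1)$ after dividing by $v(G)$; the admissible range $0\le p\le\min(\frac{2a}{a+b},\frac{2b}{a+b})$ is precisely the condition $k\le\min(a,b)n$ that keeps the binomials from vanishing.

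The step I expect to require the most care is the exact count in the biregular configuration model---obtaining the factor $(ab)^k$ together with the denominator $\binom{abn}{k}$, and being explicit about the multigraph convention for $m_k$. Once that closed formula is secured, the remaining Stirling estimate is mechanical and parallels the $d$-regular computation already carried out in the corollary preceding this lemma.
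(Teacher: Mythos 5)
Your proof is correct and follows essentially the same route as the paper: you arrive at the identical exact formula $\mathbb{E}m_k(G)=\binom{an}{k}\binom{bn}{k}(ab)^k/\binom{abn}{k}$ (the paper counts ordered partition pairs $(P_1,P_2)$ rather than stub-matchings, but these are equivalent descriptions of the configuration model) and then applies the same entropy/Stirling estimate to identify the exponent with $\G_{a,b}(p)$. The only divergence is that the paper also proves part (b) by a short direct count ($T_jS_j/N$ with $T_j=\binom{abn}{2j}\binom{an}{j}\binom{bn}{j}(2j-1)!\,j!^2$), whereas you defer entirely to \cite{DJ}; since the statement itself attributes (b) to that reference, this is acceptable.
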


\begin{proof} (a) Note that the number of all partitions pairs $(P_1,P_2)$ is
$$N=\frac{(abn)!}{a!^{bn}}\cdot\frac{(abn)!}{b!^{an}}.$$
The number of possible $k$-matchings is
$$U_k={abn \choose k}{an \choose k}{bn \choose k}k!^2.$$
If we fix one $k$-matching then we need to repartition the remaining $(abn-k)$ elements into sets of sizes $a$ and $a-1$, and $b$ and $b-1$. This can be done in
$$V_k=\frac{(abn-k)!}{(a-1)!^k a!^{bn-k}}\cdot \frac{(abn-k)!}{(b-1)!^k b!^{an-k}}$$
ways.
Hence
$$\mathbb{E}m_k(G)=\frac{1}{N}U_kV_k={an \choose k}{bn \choose k}(ab)^k\frac{1}{{abn \choose k}}.$$
Then by the usual approximation of binomial coefficients we get that
$$\mathbb{E}m_k(G)=\exp(v(G)(\G_{a,b}(p)+o_{v(G)}(1))),$$
where $p=2k/v(G)$.
\medskip

\noindent (b) We can choose the possible cycles in
$$T_j={abn \choose 2j}{an \choose j}{bn \choose j}(2j-1)! j!^2$$
different ways. (We can choose the 'edges', and vertices in ${abn \choose 2j}{an \choose j}{bn \choose j}$ ways, then we choose an ordering on the edges, and on each vertex sets, and we connect the vertices and 'edges' along the orderings. Finally, we divide by $(2j)$ since we counted each cycles in $2j$ ways.) Next we need to repartition the remaining $(abn-2j)$ elements into sets of sizes $a$ and $a-2$, and $b$ and $b-2$. This can be done in
$$S_j=\frac{(abn-2j)!}{(a-2)!^j a!^{bn-j}}\cdot \frac{(abn-2j)!}{(b-2)!^j b!^{an-j}}$$
ways. Hence
$$\mathbb{E}c_{2j}(G)=\frac{1}{N}T_jS_j=\frac{((a-1)(b-1))^{j}}{2j}(1+o_{v(G)}(1)).$$
\end{proof}

Part (b) of Lemma~\ref{random} shows that the expected number of cycles of length $2j$ is bounded independently of the size of the graph. Note that the $(a,b)$-biregular graph sequence $(G_i)$ Benjamini--Schramm converges to $\mathbb{T}_{a,b}$ if for all fixed $j$ we have $c_{2j}(G_i)=o(v(G_i))$.  Note that by Markov's inequality:
$$\mathbb{P}(m_k(G)>3\mathbb{E}m_k(G))\leq \frac{1}{3}\ \  \mbox{and,}\ \ \ \mathbb{P}(c_{2j}(G)>3g \mathbb{E}c_{2j}(G))\leq \frac{1}{3g}$$
for $j=1,\dots ,g$. Hence for any large enough $n$ and fixed $g$, with probability at least $1/3$ we can choose a graph $G_i$ on $(a+b)n$ vertices such that
$G_i$ has a bounded number of cycles of length at most $2g$ and $m_k(G_i)\leq 3\exp(v(G)(\G_{a,b}(p)+o_{v(G)}(1)))$. This shows that we can choose a sequence of graphs $(G_i)$ converging to $\mathbb{T}_{a,b}$ such that
$$\frac{\ln m_k(G_i)}{v(G_i)}+o_{v(G_i)}(1)=\lambda_{G_i}(p)\leq \G_{a,b}(p)+o_{v(G_i)}(1).$$
This implies that
$$\lambda_{\mathbb{T}_{a,b}}(p)\leq \G_{a,b}(p).$$
Note that we only proved this inequality for rational $p$, but then it follows for all $p$ by continuity.
\bigskip

Unfortunately, with this idea we were not able to establish the inequality $\lambda_{\mathbb{T}_{a,b}}(p)\geq \G_{a,b}(p)$.
The problem is the following. In principle, it can occur that a typical random graph has much smaller (exponentially smaller) number of $k$-matchings than the expected value, and a large contribution to the expected value comes from graphs having large number of short cycles and matchings. Note that Theorem~\ref{biregular} implies that this cannot occur, but we cannot use this result as it would result a cycle in the proof of this theorem. Instead we propose a conjecture which would imply the inequality $\lambda_{\mathbb{T}_{a,b}}(p)\geq \G_{a,b}(p)$.

\begin{Conj} There exists a constant $C$ independently of $n$ and $k$ such that
$$\mathbb{E}m_k(G)^2\leq C (\mathbb{E}m_k(G))^2.$$
\end{Conj}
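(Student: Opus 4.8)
The plan is to prove the conjecture by a direct second moment computation in the configuration model, in exactly the spirit in which the first moment $\mathbb{E}m_k(G)$ was computed in Lemma~\ref{random}. Writing
$$\mathbb{E}m_k(G)^2=\sum_{(M_1,M_2)}\mathbb{P}(M_1\cup M_2\subseteq G),$$
where the sum runs over all ordered pairs $(M_1,M_2)$ of $k$-matchings of the vertex set $V\cup W$, the crucial observation is that, just as $\mathbb{P}(M\subseteq G)$ depends only on $|M|=k$, the co-occurrence probability $\mathbb{P}(M_1\cup M_2\subseteq G)$ depends only on the combinatorial type of the overlap of the two matchings. I would therefore group the pairs by this overlap type and evaluate each contribution as a closed product of factorials.

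To describe the type, note that since $M_1$ and $M_2$ are matchings, the union $M_1\cup M_2$ has maximum degree $2$, so it decomposes into shared edges (those in $M_1\cap M_2$), alternating paths, and alternating even cycles. For the purpose of computing $\mathbb{P}(M_1\cup M_2\subseteq G)$ in the configuration model, only three quantities matter: the number $e$ of distinct edges, the number $\ell$ of distinct left vertices covered, and the number $r$ of distinct right vertices covered. These are in turn governed by a few integer parameters, for instance the number $s=|M_1\cap M_2|$ of shared edges together with the numbers $\ell_2,r_2$ of left/right vertices meeting one edge of $M_1\setminus M_2$ and one edge of $M_2\setminus M_1$. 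Exactly as in the proof of Lemma~\ref{random}, fixing such a pattern and repartitioning the remaining elements of $\{1,\dots,abn\}$ yields a product of binomial coefficients and factorials for the number of partition pairs realizing it.

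Assembling these contributions and dividing by $(\mathbb{E}m_k(G))^2$, the ratio takes the form
$$\frac{\mathbb{E}m_k(G)^2}{(\mathbb{E}m_k(G))^2}=\sum_{\mathrm{types}}R(\mathrm{type}),$$
in which the ``diagonal'' contribution of vertex-disjoint pairs ($s=\ell_2=r_2=0$) is at most $1$, while the remaining terms measure the positive correlation created by shared edges and by short alternating cycles. Since we need only a bound by a constant, and not that the ratio tends to $1$, it suffices to estimate each $R(\mathrm{type})$ by Stirling's formula—as in the corollary following Theorem~\ref{random2}—and to show that the resulting generating series over the overlap parameters is dominated by a convergent, essentially geometric, series whose total depends only on the fixed integers $a,b$.

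The main obstacle I anticipate is the uniformity of the constant $C$ in $k$, and in particular the behaviour near the largest matchings, when $p=\frac{2k}{v(G)}$ approaches its maximal value. This is precisely the regime where Theorem~\ref{main} and Theorem~\ref{biregular} are advertised as tight only for $p$ separated away from $1$, and where the fluctuations coming from short cycles are most pronounced; showing that the cycle contributions aggregate to a bounded factor rather than a growing one is the heart of the matter. Encouragingly, in the special case $p=1$ (perfect matchings of random regular bipartite graphs) the boundedness of the second moment is classical, which suggests both that the series does converge and that the delicate work lies entirely in making the bound uniform across all $k$. If these cycle terms can be summed to a finite constant depending on $a,b$ alone, the conjecture follows, and with it the inequality $\lambda_{\mathbb{T}_{a,b}}(p)\geq \G_{a,b}(p)$.
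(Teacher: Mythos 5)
This statement is left as an open \emph{conjecture} in the paper: the author explicitly does not prove it, remarking only that the special case of perfect matchings in regular random graphs is known (Bollob\'as--McKay), and then shows what the conjecture \emph{would} imply. So there is no proof in the paper to compare yours against, and your text does not close the gap either: it is a plan whose decisive step is stated conditionally. The phrase ``it suffices to \dots show that the resulting generating series over the overlap parameters is dominated by a convergent, essentially geometric, series'' is precisely the content of the conjecture, and you never establish it; you then concede that ``showing that the cycle contributions aggregate to a bounded factor rather than a growing one is the heart of the matter'' and end with ``\emph{if} these cycle terms can be summed to a finite constant \dots the conjecture follows.'' A conditional of that form is not a proof of the unconditional statement.

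Concretely, the missing estimate is nontrivial for two reasons. First, the overlap of two $k$-matchings is not described by finitely many parameters: it decomposes into shared edges plus alternating paths and cycles of \emph{arbitrary} lengths, so the ``types'' form an infinite family indexed by a partition-like structure, and the number of pairs realizing a type grows combinatorially with the number and lengths of these components. One must show that the factorial ratios $R(\mathrm{type})$ decay fast enough to beat this entropy, uniformly in $k$. Second, and more seriously, the required uniformity in $k$ is exactly where the difficulty concentrates: as $p=2k/v(G)$ approaches its maximum the denominators in $R(\mathrm{type})$ (ratios of the form $(abn-2k+\cdots)!$ over nearly equal factorials) degenerate, and it is not clear that the series remains bounded by a constant depending on $a,b$ alone. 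The known $p=1$ result for regular graphs is obtained by a delicate small-subgraph-conditioning computation and does not automatically interpolate to all $k$. Until you actually carry out the summation over overlap types and verify its uniform boundedness, this remains a strategy, not a proof --- which is consistent with the paper's own decision to state it as a conjecture.
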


Note that this conjecture is known to be true for perfect matchings in regular random graphs \cite{BM}.
To show that this conjecture implies $\lambda_{\mathbb{T}_{a,b}}(p)\geq \G_{a,b}(p)$, we need the following proposition.

\begin{Prop} Let $X$ be a non-negative random variable such that for some positive constant $C$ we have
$$\mathbb{P}(X> C\cdot \mathbb{E}X)\leq \frac{1}{16C}\ \ \mbox{and}\ \ \ \mathbb{E}X^2\leq C (\mathbb{E}X)^2.$$
Then 
$$\mathbb{P}\left(\frac{1}{4}\mathbb{E}X\leq X\leq C\mathbb{E}X\right)\geq \frac{1}{2C}.$$
\end{Prop}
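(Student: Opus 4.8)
The plan is to lower bound the probability that $X$ is not too small by a Paley--Zygmund argument, and then to subtract off the small probability that $X$ is too large using the given tail bound; the two calibrated constants $\tfrac{9}{16}$ and $\tfrac{1}{16}$ will conspire to leave exactly $\tfrac{1}{2C}$. We may assume $\mathbb{E}X>0$, since otherwise $X=0$ almost surely and the asserted event has probability $1$.

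First I would establish a Paley--Zygmund type lower bound on $\mathbb{P}r\!\left(X>\tfrac14\mathbb{E}X\right)$. Splitting the expectation as $\mathbb{E}X=\mathbb{E}[X\mathbf{1}_{X\le \frac14\mathbb{E}X}]+\mathbb{E}[X\mathbf{1}_{X>\frac14\mathbb{E}X}]$, bounding the first summand crudely by $\tfrac14\mathbb{E}X$ and applying Cauchy--Schwarz to the second, one obtains
$$\mathbb{E}X\le \tfrac14\mathbb{E}X+\sqrt{\mathbb{E}X^2}\cdot\sqrt{\mathbb{P}r\!\left(X>\tfrac14\mathbb{E}X\right)}.$$
Rearranging and squaring, and then invoking the second moment hypothesis $\mathbb{E}X^2\le C(\mathbb{E}X)^2$, gives
$$\mathbb{P}r\!\left(X>\tfrac14\mathbb{E}X\right)\ge \frac{(3/4)^2(\mathbb{E}X)^2}{\mathbb{E}X^2}\ge \frac{9}{16}\cdot\frac1C=\frac{9}{16C}.$$

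Next I would glue this to the tail bound. Since
$$\left\{X>\tfrac14\mathbb{E}X\right\}\subseteq \left\{\tfrac14\mathbb{E}X<X\le C\mathbb{E}X\right\}\cup\{X>C\mathbb{E}X\},$$
the union bound together with the hypothesis $\mathbb{P}r(X>C\mathbb{E}X)\le \tfrac{1}{16C}$ yields
$$\mathbb{P}r\!\left(\tfrac14\mathbb{E}X\le X\le C\mathbb{E}X\right)\ge \mathbb{P}r\!\left(X>\tfrac14\mathbb{E}X\right)-\mathbb{P}r(X>C\mathbb{E}X)\ge \frac{9}{16C}-\frac{1}{16C}=\frac1{2C},$$
which is exactly the claim.

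There is no serious obstacle here: the argument is a one-line Paley--Zygmund estimate followed by a union bound. The only points demanding care are the truncation in the Cauchy--Schwarz step and keeping the numerical constants aligned so that the Paley--Zygmund lower bound $\tfrac{9}{16C}$ exceeds the discarded tail $\tfrac{1}{16C}$ by precisely the target $\tfrac{1}{2C}$; the thresholds $\tfrac14$ and $\tfrac{1}{16C}$ in the hypotheses are evidently chosen to make this margin work out cleanly.
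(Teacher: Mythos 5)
Your proof is correct, and it reaches the same constant $\tfrac{1}{2C}$ by a genuinely different bookkeeping. The paper tracks the \emph{mass} of $X$ rather than probabilities: writing $A=\{X<\tfrac14\mathbb{E}X\}$, $B=\{\tfrac14\mathbb{E}X\le X\le C\mathbb{E}X\}$, $D=\{X>C\mathbb{E}X\}$, it bounds $\int_A X\,dP\le\tfrac14\mathbb{E}X$ trivially and bounds the upper-tail mass via Cauchy--Schwarz on $D$, namely $\left(\int_D X\,dP\right)^2\le \mathbb{P}r(D)\,\mathbb{E}X^2\le \tfrac{1}{16C}\cdot C(\mathbb{E}X)^2$, so $\int_D X\,dP\le\tfrac14\mathbb{E}X$; hence $\int_B X\,dP\ge\tfrac12\mathbb{E}X$, and dividing by $C\mathbb{E}X$ gives $\mathbb{P}r(B)\ge\tfrac1{2C}$. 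You instead apply Cauchy--Schwarz on the event $\{X>\tfrac14\mathbb{E}X\}$ (the Paley--Zygmund inequality) to get $\mathbb{P}r(X>\tfrac14\mathbb{E}X)\ge\tfrac{9}{16C}$ and then use the tail hypothesis purely additively through a union bound. So the paper feeds the hypothesis $\mathbb{P}r(D)\le\tfrac{1}{16C}$ \emph{into} the Cauchy--Schwarz step to control $\int_D X\,dP$, while you subtract it at the end; both uses of the two hypotheses are calibrated to yield exactly $\tfrac{8}{16C}$. Your route is arguably more standard (a named inequality plus a union bound); the paper's yields the slightly stronger intermediate fact that at least half the expectation of $X$ is carried by the window $B$, though only the probability bound is needed downstream. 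One microscopic caveat: your reduction to $\mathbb{E}X>0$ is fine for the intended application, but in the degenerate case $X=0$ a.s.\ the conclusion $1\ge\tfrac1{2C}$ requires $C\ge\tfrac12$ (the second-moment hypothesis forces $C\ge1$ whenever $\mathbb{E}X>0$, so this never matters in practice, and the paper's proof tacitly assumes $\mathbb{E}X>0$ as well).
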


\begin{proof} Let $A=\{\omega\ |\ X(\omega)<  \frac{1}{4}\mathbb{E}X\}$, $B=\{\omega\ |\ \frac{1}{4}\mathbb{E}X\leq X(\omega)\leq  \ C\mathbb{E}X\}$, and $D=\{\omega\ |\ X(\omega)> C\mathbb{E}X\}$. Then
$$\int_A X dP\leq \frac{1}{4}\mathbb{E}X.$$
Furthermore,
$$\mathbb{P}(D)\cdot \mathbb{E}X^2\geq \mathbb{P}(D)\cdot \int_D X^2 dP=\int_D 1 dP \cdot \int_D X^2 dP\geq \left( \int_D X dP\right)^2.$$
Hence
$$\frac{1}{16C} C (\mathbb{E}X)^2\geq \mathbb{P}(D)\cdot \mathbb{E}X^2\geq \left( \int_D X dP\right)^2.$$
In other words,
$$\int_D X dP\leq \frac{1}{4}\mathbb{E}X.$$
This implies that
$$\int_B X dP\geq \frac{1}{2}\mathbb{E}X.$$
Since 
$$\int_B X dP \leq \mathbb{P}(B) C\mathbb{E}X,$$
the claim of the proposition follows immediately.
\end{proof}

Let us fix a positive number $g$, and let us call a graph \emph{typical} if 
$$c_{2j}(G)<16Cg \mathbb{E}c_{2j}(G),$$
for $j=1,\dots ,g$. Note that a typical graph has bounded number of short cycles and by Markov's inequality, the probability that a graph is typical is at least $1-\frac{1}{16C}$. First case: we find a typical graph $G$ such that $m_k(G)>C \mathbb{E}m_k(G)$, then we are done, because then $\lambda_G(p)\geq \G_{a,b}(p)+o(1)$. Second case: there is no typical graph with $m_k(G)>C \mathbb{E}m_k(G)$, then the proposition implies that the probability 
$$\mathbb{P}\left(\frac{1}{4}\mathbb{E}m_k(G)\leq m_k(G)\leq C\mathbb{E}m_k(G)\right)\geq \frac{1}{2C}.$$
Since the  probability that a graph is typical is at least $1-\frac{1}{16C}$, we see that there are typical graphs for which 
$$m_k(G)\geq \frac{1}{4}\mathbb{E}m_k(G)$$
implying again that $\lambda_G(p)\geq \G_{a,b}(p)+o(1)$. Hence we can choose a sequence of typical graphs to show that $\lambda_{\mathbb{T}_{a,b}}(p)\geq \G_{a,b}(p)$.

In spite of the fact that this proof did not lead to another proof of Theorem~\ref{tree2}, we feel that it was instructive to carry out these computations as they showed that Theorem~\ref{main} and Theorem~\ref{biregular} are tight. This was known for perfect matchings of $d$--regular random graphs \cite{BM,sch2}, and for matchings of arbitrary size \cite{FKM}. Our computation for biregular bipartite graphs is the natural counterpart of these results.

\section{New version of Gurvits's theorem} \label{effective form}

In this section we prove the following theorem.

\begin{Th} \label{equivalent} The following two statements are equivalent. 
\medskip

\noindent (i) For any  $d$--regular bipartite graph $G$ on $2n$ vertices, we have
$$\frac{\ln m_k(G)}{v(G)}\geq \G_d(p)+o_{v(G)}(1),$$
where $p=\frac{k}{n}$ and $m_k(G)$ denotes the number of matchings of size $k$.
\medskip

\noindent (ii) For any $d$--regular bipartite graph $G$, we have
$$\lambda_G(p)\geq \G_d(p).$$
\end{Th}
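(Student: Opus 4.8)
The plan is to treat this purely as a translation between the combinatorial quantity $\frac{\ln m_k(G)}{v(G)}$ and the analytic entropy function $\lambda_G(p)$; all the substantive work is already contained in Proposition~\ref{asymp}, and no new idea about matchings is needed. The one structural fact I would record at the outset is that a $d$--regular bipartite graph $G$ on $2n$ vertices always has a perfect matching (Hall's/K\"onig's theorem), so $\nu(G)=n$ and $p^*(G)=1$. Consequently part (d) of Proposition~\ref{asymp} applies to \emph{every} $k\le n$, and this is exactly the bridge between the two statements.

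For $(ii)\Rightarrow(i)$ I would argue directly. Fix $G$ and $k\le n$ and set $p=\frac{k}{n}=\frac{2k}{v(G)}$. Part (d) gives
$$\frac{\ln m_k(G)}{v(G)}\ge \lambda_G(p)-\frac{\ln v(G)}{v(G)}.$$
Assuming (ii) we have $\lambda_G(p)\ge \G_d(p)$, and since $\frac{\ln v(G)}{v(G)}=o_{v(G)}(1)$ we conclude $\frac{\ln m_k(G)}{v(G)}\ge \G_d(p)+o_{v(G)}(1)$, which is precisely (i).

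For $(i)\Rightarrow(ii)$ the subtlety is that (i) only controls $\lambda_G$ at the discrete values $p=\frac{k}{n}$, and only up to an $o_{v(G)}(1)$ error, whereas (ii) demands a clean inequality at every real $p\in[0,1]$. I would remove both defects at once by passing to disjoint copies. Fix $G$ and a target $p\in[0,1)$ (the endpoint $p=1$ follows in the same manner via part (e), using $m_{rn}(rG)=\prm(G)^r$). Choose integers $k_r$ with $\frac{2k_r}{r\,v(G)}\to p$; by the alternative description of the entropy function in Remark~\ref{alternative entropy} (which is assembled from parts (a) and (d)),
$$\lambda_G(p)=\lim_{r\to\infty}\frac{\ln m_{k_r}(rG)}{v(rG)}.$$
Applying (i) to the $d$--regular bipartite graph $rG$ with $k=k_r$ and $p_r=\frac{2k_r}{v(rG)}$ yields $\frac{\ln m_{k_r}(rG)}{v(rG)}\ge \G_d(p_r)+o_{v(rG)}(1)$. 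Letting $r\to\infty$, the left side tends to $\lambda_G(p)$, the error term vanishes, and $\G_d(p_r)\to\G_d(p)$ by continuity of $\G_d$; hence $\lambda_G(p)\ge \G_d(p)$.

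I do not expect a genuine obstacle: the real content is all in Proposition~\ref{asymp}. The only point requiring care is the $(i)\Rightarrow(ii)$ step just sketched, where the disjoint-copies device $rG$ (equivalently, invoking Remark~\ref{alternative entropy} together with parts (a) and (f)) is what upgrades the approximate, rational-$p$ estimate of (i) into the exact inequality of (ii) valid for all $p$, with continuity of $\G_d$ absorbing the approximation $p_r\to p$.
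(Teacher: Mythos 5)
Your proposal is correct and follows essentially the same route as the paper: the easy direction is identical, and for $(i)\Rightarrow(ii)$ both arguments pass to disjoint copies $rG$ so that parts (a) and (d) of Proposition~\ref{asymp} kill the $\frac{\ln v}{v}$ and $o_{v}(1)$ error terms, with continuity of $\G_d$ (and of $\lambda_G$ at $p=1$) absorbing the discretization of $p$. The only cosmetic difference is that the paper fixes a rational $p=a/b$ and takes $br$ copies so that $p$ itself is realized exactly, whereas you let $p_r\to p$ and invoke Remark~\ref{alternative entropy}; these are interchangeable.
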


\begin{proof} First we show that (i) implies (ii). Since both functions $\lambda_G(p)$ and $\G_d(p)$
are continuous, it is enough to prove the claim for rational numbers $p$. Let $p=\frac{a}{b}$. Let us consider $br$ copies of $G$, and let us consider the matchings of size $k=ar$. Then
$$\lambda_G(p)=\lambda_{brG}(p)\geq \frac{\ln m_k(brG)}{v(brG)}-\frac{\ln v(brG)}{v(brG)}\geq \G_d(p)+o_{v(brG)}(1)-\frac{\ln v(brG)}{v(brG)}.$$
The (first) equality follows from part (a) of Proposition~\ref{asymp}, the first inequality follows from part (d) of Proposition~\ref{asymp}, the second inequality is the assumption of (i). As $r$ tends to infinity, the last two terms disappear, and  we get that
$$\lambda_G(p)\geq \G_d(p).$$
\medskip

\noindent Next we show that (ii) implies (i).
$$\frac{\ln m_k(G)}{v(G)}\geq \lambda_G(p)-\frac{\ln v(G)}{v(G)}\geq \G_d(p)-\frac{\ln v(G)}{v(G)}.$$
The first inequality follows from part (d) of Proposition~\ref{asymp}, the second inequality is the assumption of (ii).
Note that $-\frac{\ln v(G)}{v(G)}=o_{v(G)}(1)$. So we are done.
\end{proof}

\begin{Cor} Theorem~\ref{Gurvits} implies that
$$\frac{\ln m_k(G)}{v(G)}\geq \G_d(p)-\frac{\ln v(G)}{v(G)}.$$
\end{Cor}

\begin{proof} See the second part of the proof of Theorem~\ref{equivalent}.
\end{proof}

\begin{Rem} \label{rem: Gurvits} L. Gurvits actually proved much stronger results than Theorem~\ref{Gurvits}. 

He proved that for all pairs of $n \times n$ matrices $(P,Q)$, where $P$ is nonnegative and $Q$ is doubly stochastic we have
$$\ln(\mathrm{Per}(P)) \geq \sum_{1 \leq i,j \leq n} (1- Q(i,j))\ln(1- Q(i,j))-\sum_{1 \leq i,j \leq n} Q(i,j) \ln \left(\frac{Q(i,j)}{P(i,j)} \right).$$
From this L. Gurvits deduced the following inequality: for any doubly stochastic matrix $A$ we have
$$\mathrm{Per}(A) \geq \prod_{1 \leq i,j \leq n} \left(1- A(i,j)\right)^{1- A(i,j)}.$$
Next he showed that this inequality implies that for a $d$--regular bipartite graph $G$ we have
$$m_k(G)\geq \frac{\left(1-\frac{p}{d}\right)^{\left(1-\frac{p}{d}\right)nd}\left(1-\frac{1}{n}\right)^{\left(1-\frac{1}{n}\right)2n^2(1-p)}}{\left(\frac{p}{d}\right)^{np}n^{-2n(1-p)}((n(1-p))!)^2},$$
where $p=\frac{k}{n}$ as before. For fixed $p\in (0,1)$ this gives the inequality
$$m_k(G)\geq \left(1+O\left(\frac{1}{n}\right)\right)\frac{e^{1-p}}{2\pi n(1-p)}\exp(2n\G_d(p)).$$

Let us mention that M. Lelarge \cite{Lelarge} was able to give new proofs to Gurvits's results and extending both Gurvits's results and the results in this paper by combining the methods of this paper together with new ideas.

\end{Rem}

\section{New proof of Gurvits's and Schrijver's theorems} \label{new proof}

In this section we give a new proof  of Gurvits's and Schrijver's theorems. We will prove that for any $d$--regular bipartite graph $G$, we have
$$\lambda_G(p)\geq \G_d(p).$$
According to Theorem~\ref{equivalent}, this is equivalent with Gurvits's theorem. For $p=1$ we get back Schrijver's theorem via
part (e) of Proposition~\ref{asymp}. Note that the function on the right hand side is nothing else than $\lambda_{\mathbb{T}_d}(p)$ according to Theorem~\ref{tree}.

\begin{Def} \label{def: 2-lift} Let $G$ be a graph. Then $H$ is a $2$-lift of $G$, if $V(H)=V(G)\times \{0,1\}$, and for every $(u,v)\in E(G)$, exactly one of the following two pairs are edges of $H$: $((u,0),(v,0))$ and $((u,1),(v,1))\in E(H)$ or $((u,0),(v,1))$ and $((u,1),(v,0))\in E(H)$. If $(u,v)\notin E(G)$, then none of $((u,0),(v,0))$, $((u,1),(v,1))$, $((u,0),(v,1))$ and $((u,1),(v,0))$ are edges in $H$.
\end{Def}

Note that if $G$ is bipartite then any $2$-lift of $G$ is bipartite too.

\begin{Lemma} \label{mainlemma} Let $G$ be a bipartite graph, and $H$ be a $2$-lift of $G$. Then for any $k$, we have
$$m_k(G\cup G)\geq m_k(H).$$
In particular, for any $t\geq 0$ we have
$$M(G,t)^2\geq M(H,t).$$
\end{Lemma}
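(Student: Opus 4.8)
The plan is to prove the coefficientwise inequality $m_k(G\cup G)\ge m_k(H)$ for every $k$; the polynomial statement then follows at once, because $m_k(G\cup G)=\sum_{j}m_j(G)m_{k-j}(G)$ is exactly the coefficient of $t^k$ in $M(G,t)^2=M(G\cup G,t)$, and all the coefficients in sight are nonnegative, so $M(G,t)^2\ge M(H,t)$ for every $t\ge 0$.

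Both $H$ and $G\cup G$ share the vertex set $V(G)\times\{0,1\}$, and each edge of $G$ has exactly two preimages in either graph. I would record the $2$-lift by a signing of $G$: following Definition~\ref{def: 2-lift}, call an edge $e=(u,v)$ of $G$ \emph{parallel} if its lifts are $((u,0),(v,0))$ and $((u,1),(v,1))$, and \emph{crossing} otherwise; thus $G\cup G$ is the all-parallel lift. The key device is to analyze a matching $M$ of $H$ through its projection $\pi(M)$ to $G$, where a lift of $e$ projects to $e$. Since the two lifts of any edge are vertex-disjoint, an edge of $G$ may be covered zero, one, or two times by $\pi(M)$. Let $D$ be the set of \emph{doubled} edges (both lifts used) and let the \emph{single} edges be the rest. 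A doubled edge saturates all four lift-vertices over its endpoints, so the edges of $D$ are pairwise disjoint and disjoint from every single edge, while at any vertex not met by $D$ at most two single edges can occur (one per lift-copy). Hence $\pi(M)$ consists of a matching $D$ together with a disjoint union of paths and even cycles (even, because $G$ is bipartite) formed by the single edges; I will call this data the \emph{configuration} of $M$.

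The crux is to count, for a fixed configuration $\mathcal{C}$, how many matchings of $H$ and how many of $G\cup G$ project to it; since all such matchings have the same size $2|D|+(\#\,\text{single edges})$, it suffices to compare these counts configuration by configuration. A doubled edge forces both its lifts, contributing a factor $1$ in either graph. Along a single-edge path, the lift-copy used at one endpoint is free ($2$ choices) and every subsequent lift is then forced by the matching constraint at the internal vertices together with the parallel/crossing type of each edge, so a path contributes a factor $2$ in both graphs. For a single-edge even cycle I would carry a level function $\phi$ around the cycle: tracking the forced alternation of lift-copies at the internal vertices, the closing constraint turns out to be satisfiable exactly when the number of crossing edges on the cycle is even, in which case there are again $2$ liftings and otherwise $0$. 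In $G\cup G$ every cycle is all-parallel, so it always contributes the factor $2$.

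Consequently, for each configuration $\mathcal{C}$ the number of projecting matchings of $H$ equals that of $G\cup G$ multiplied by a product of per-cycle factors, each lying in $\{0,1\}$, and is therefore at most the $G\cup G$ count. Summing over all configurations of total size $k$ yields $m_k(H)\le m_k(G\cup G)$, as desired. I expect the cycle computation to be the main obstacle: making the level-function bookkeeping around an even cycle precise enough to pin down the parity condition on the number of crossing edges, and to confirm the exact count of $2$ or $0$ liftings. The remaining ingredients are routine structural bookkeeping about how a matching of a $2$-lift decomposes over the base graph $G$.
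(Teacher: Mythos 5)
Your proposal is correct and follows essentially the same route as the paper: project a matching of the $2$-lift to $G$, decompose the projection into double-edges, paths, and even cycles, and compare the number of preimages per configuration, with $G\cup G$ always achieving the maximal count $2^{k(R)}$ while a cycle in $H$ may contribute $0$ instead of $2$. Your extra parity analysis (a cycle lifts iff it has an even number of crossing edges) is a correct refinement of the paper's one-line observation that $\lvert\phi_H^{-1}(R)\rvert\leq 2^{k(R)}$, but it is not needed for the inequality.
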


\begin{proof} Since $M(G\cup G,t)=M(G,t)^2$, the inequality $m_k(H)\leq m_k(G\cup G)$ would indeed imply the second part of the lemma.
Note that $G\cup G$ can be considered as a trivial $2$-lift of $G$. Let $M$ be a matching of a $2$-lift of $G$. Let us consider the projection of $M$ to $G$, then it will consist of disjoint unions of cycles of even lengths (here we use that $G$ is bipartite!), paths and "double-edges" (i.e, when two edges project to the same edge). Let $\mathcal{R}$ be the set of these configurations. Then 
$$m_k(H)=\sum_{R \in \mathcal{R}}|\phi_H^{-1}(R)|$$
and
$$m_k(G\cup G)=\sum_{R \in \mathcal{R}}|\phi_{G\cup G}^{-1}(R)|,$$
where $\phi_H$ and $\phi_{G\cup G}$ are the projections from $H$ and $G\cup G$ to $G$. Note that
$$|\phi_{G\cup G}^{-1}(R)|=2^{k(R)},$$
where $k(R)$ is the number of connected components of $R$ different from a double-edge. On the other hand,
$$|\phi_{H}^{-1}(R)|\leq 2^{k(R)},$$
since in each component if we know the inverse image of one edge then we immediately know the inverse images of all other edges.
The only reason why there is no equality in general is that not necessarily every cycle can be obtained as a projection of a matching of a 2-lift: for instance, if one consider an 8-cycle as a 2-lift of a 4-cycle, then no matching will project to the whole 4-cycle.
Hence 
$$|\phi_{H}^{-1}(R)|\leq |\phi_{G\cup G}^{-1}(R)|$$
and consequently,
$$m_k(H)\leq m_k(G\cup G).$$
\end{proof}

By part (g) of Proposition~\ref{asymp} we get the following corollary.

\begin{Cor} \label{mainlemmacor} If $G$ is a bipartite graph, and $H$ is a $2$-lift of $G$, then $\lambda_{G}(p)\geq \lambda_{H}(p)$
for every $0\leq p\leq 1$.
\end{Cor}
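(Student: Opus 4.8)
The plan is to obtain this corollary as an immediate consequence of Lemma~\ref{mainlemma} together with part (g) of Proposition~\ref{asymp}. The only real work is to check that the quadratic inequality $M(G,t)^2\geq M(H,t)$ provided by Lemma~\ref{mainlemma} translates, after normalizing by vertex counts, into exactly the hypothesis that part (g) requires.

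First I would record the bookkeeping fact that a $2$-lift doubles the vertex set: since $V(H)=V(G)\times\{0,1\}$, we have $v(H)=2v(G)$. This is the key point, because it is precisely this doubling that will absorb the square on $M(G,t)$ and make the two per-vertex log-partition functions directly comparable.

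Next I would take the inequality $M(G,t)^2\geq M(H,t)$ from Lemma~\ref{mainlemma}, valid for every $t\geq 0$; note that both sides are positive since $M(\cdot,t)\geq m_0=1$, so taking logarithms is legitimate and gives $2\ln M(G,t)\geq \ln M(H,t)$. Dividing both sides by $2v(G)=v(H)$ then yields
$$\frac{\ln M(G,t)}{v(G)}\geq \frac{\ln M(H,t)}{v(H)}$$
for every $t\geq 0$. This is exactly the hypothesis of part (g) of Proposition~\ref{asymp} applied with $G_1=G$ and $G_2=H$, and that proposition immediately delivers $\lambda_G(p)\geq \lambda_H(p)$ for every $0\leq p\leq 1$, as desired.

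I do not expect any genuine obstacle here, since all the substance lives in Lemma~\ref{mainlemma} and in part (g) of Proposition~\ref{asymp}, both already proved. The single thing to handle with care is the normalization: one must divide by $v(H)=2v(G)$ rather than by $v(G)$, and verifying that this doubling exactly matches the exponent in $M(G,t)^2$ is the whole (minor) content of the argument.
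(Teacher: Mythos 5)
Your proposal is correct and matches the paper's argument exactly: the paper also derives the corollary by feeding the inequality $M(G,t)^2\geq M(H,t)$ of Lemma~\ref{mainlemma} (after taking logarithms and dividing by $v(H)=2v(G)$) into part (g) of Proposition~\ref{asymp}. The normalization detail you single out is precisely the one implicit step the paper leaves to the reader.
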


\begin{Lemma}(Nathan Linial \cite{nati}) For any graph $G$, there exists a graph sequence $(G_i)_{i=0}^{\infty}$ such that $G_0=G$, $G_i$ is a $2$-lift of $G_{i-1}$ for $i\geq 1$, and $g(G_i)\to \infty$, where $g(H)$ is the girth of the graph $H$, i. e., the length of the shortest cycle.
\end{Lemma}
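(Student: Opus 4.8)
The plan is to exploit the description of a $2$-lift by a \emph{signing} $\sigma\colon E(G)\to\mathbb{F}_2$: the edge $e=(u,v)$ is lifted "straight'' or "crossed'' according to whether $\sigma(e)=0$ or $\sigma(e)=1$, and every $2$-lift arises this way. The projection $\pi\colon H\to G$ is then a two-sheeted covering map, so each cycle of $H$ projects to a non-backtracking closed walk of the same length in $G$, while conversely a cycle $C\subseteq G$ of length $\ell$ lifts to two disjoint $\ell$-cycles when its monodromy $\langle\sigma,C\rangle:=\sum_{e\in C}\sigma(e)$ equals $0$, and to a single $2\ell$-cycle when it equals $1$. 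A routine consequence of these covering facts is that $g(H)\ge g(G)$ for every $2$-lift, and that, writing $g=g(G)$ and letting $c_g(G)$ denote the number of $g$-cycles, one has
$$c_g(H)=2\cdot\#\{C\colon |C|=g,\ \langle\sigma,C\rangle=0\},$$
since every $g$-cycle of $H$ projects to a non-backtracking closed $g$-walk of $G$, which—the girth being $g$—must be a genuine $g$-cycle. As girth never decreases along a tower of $2$-lifts, the whole lemma reduces to the one-step claim: \emph{if $G$ has finite girth $g$ and $c_g(G)\ge 1$, then some $2$-lift $H$ satisfies $c_g(H)\le c_g(G)-1$.}

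To prove the one-step claim I would average over signings, but with a twist, because the naive first moment is useless: for $\sigma$ chosen uniformly at random, each $g$-cycle has trivial monodromy with probability exactly $\tfrac12$, so $\mathbb{E}\,c_g(H)=c_g(G)$ and the mean alone never exhibits a decrease. Instead I would track the signed discrepancy
$$S(\sigma)=\sum_{C\colon|C|=g}(-1)^{\langle\sigma,C\rangle},$$
for which the count above rewrites as $c_g(H)=c_g(G)+S(\sigma)$. The two key observations are that $\sum_{\sigma\in\mathbb{F}_2^{E}}S(\sigma)=\sum_C\sum_\sigma(-1)^{\langle\sigma,C\rangle}=0$, because each $g$-cycle $C$ has nonempty edge set and hence $\sigma\mapsto(-1)^{\langle\sigma,C\rangle}$ is a nontrivial character that sums to $0$; and that $S$ is not identically $0$, because distinct cycles have distinct edge sets, so the characters $\chi_C=(-1)^{\langle\,\cdot\,,C\rangle}$ are distinct and therefore linearly independent as functions on $\mathbb{F}_2^{E}$. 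A function with zero mean over the group that is not identically zero must take a strictly negative value, so there is a signing $\sigma$ with $S(\sigma)\le -1$; for this lift $c_g(H)=c_g(G)+S(\sigma)\le c_g(G)-1$, establishing the claim.

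Assembling the pieces: starting from $G_0=G$, apply the one-step claim at most $c_g(G)$ times to reach, through successive $2$-lifts, a graph of girth strictly larger than $g$; then repeat with the new girth, and so on. Since girth is a non-decreasing sequence of positive integers that strictly increases after each finite block of lifts (and remains $+\infty$ once a forest is reached, where one simply continues with split $2$-lifts), this produces an infinite sequence $G_0,G_1,\dots$ of successive $2$-lifts with $g(G_i)\to\infty$. The step I expect to be the main obstacle is precisely the discrepancy argument: a direct first-moment computation is powerless here because $\mathbb{E}\,c_g(H)=c_g(G)$ exactly, so the strict decrease has to be extracted from the fact that the character sum $S$ has zero mean yet is non-constant. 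The remaining covering-space bookkeeping—that $2$-lifts never decrease girth and that the only $g$-cycles of $H$ are the doublings of the trivial-monodromy $g$-cycles of $G$—is standard but must be stated with care, as the entire reduction relies on no \emph{spurious} short cycles being created in the lift.
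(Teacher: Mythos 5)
Your proposal is correct and follows essentially the same route as the paper: both arguments average the number of girth-length cycles over all $2$-lifts (equivalently, over all signings), observe that the expectation equals $c_g(G)$, and conclude that some lift has strictly fewer shortest cycles, then iterate using the fact that girth never decreases under $2$-lifts. The only difference is how one certifies that not every lift attains the mean -- the paper simply points out that the trivial lift $G\cup G$ has $2c_g(G)>c_g(G)$ shortest cycles, whereas you invoke linear independence of the characters $\chi_C$ to show the discrepancy $S$ is non-constant; both are valid, the paper's witness being slightly more economical.
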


\begin{proof} We will show that there exists a sequence $(G_i)$ of $2$-lifts such that for any $k$, there exists an $N(k)$ such that for $j>N(k)$, the graph $G_j$ has no cycle of length at most $k$. Clearly, if $H$ has no cycle of length at most $k-1$, then any $2$-lift of it has the same property. So it is enough to prove that if $H$ has no cycle of length at most $k-1$, then there exists an $H'$ obtained from $H$ by a sequence of $2$-lifts without cycle of length at most $k$. We show that if the girth $g(H)=k$, then there exists a lift of $H$ with less number of $k$-cycles than $H$. Let $X$ be the random variable counting the number of $k$-cycles in a random $2$-lift of $H$. Every $k$-cycle of $H$ lifts to two $k$-cycles or a $2k$-cycle with probability $1/2$ each, so $\mathbb{E}X$ is exactly the number of $k$-cycles of $H$. But $H\cup H$ has two times as many $k$-cycles than $H$, so there must be a lift with strictly less number of $k$-cycles than $H$ has. Choose this $2$-lift and iterate this step to obtain an $H'$ with girth at least $k+1$.

\end{proof} 

\begin{Cor} (a) For any $d$--regular graph $G$, there exists a graph sequence $(G_i)_{i=0}^{\infty}$ such that $G_0=G$, $G_i$ is a $2$-lift of $G_{i-1}$ for $i\geq 1$, and $(G_i)$ is Benjamini--Schramm convergent to the $d$--regular infinite tree $\mathbb{T}_d$. 
\medskip

\noindent (b) For any $(a,b)$-biregular bipartite graph $G$, there exists a graph sequence $(G_i)_{i=0}^{\infty}$ such that $G_0=G$, $G_i$ is a $2$-lift of $G_{i-1}$ for $i\geq 1$, and $(G_i)$ is Benjamini--Schramm convergent to the $(a,b)$-biregular infinite tree $\mathbb{T}_{a,b}$.
\end{Cor}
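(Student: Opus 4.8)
The plan is to combine the two lemmas that immediately precede this corollary. Linial's lemma guarantees, for an arbitrary graph $G$, a tower of $2$-lifts $G=G_0, G_1, G_2,\dots$ with $g(G_i)\to\infty$. The key structural observation is that the $2$-lift operation preserves degree regularity and bipartiteness: if $v$ has degree $d$ in $G_{i-1}$, then each of its two copies $(v,0),(v,1)$ in $G_i$ is incident to exactly one lift of each edge at $v$, so both copies again have degree $d$. Hence every $G_i$ in Linial's sequence is again $d$--regular (respectively $(a,b)$--biregular, using the remark after Definition~\ref{def: 2-lift} that $2$-lifts of bipartite graphs are bipartite). This is the whole content of part (a), and the only real point to check.

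First I would record the degree-preservation fact for part (a): apply Linial's lemma to the given $d$--regular graph $G$ to obtain a sequence of $2$-lifts with girth tending to infinity, observe that each $G_i$ is $d$--regular by the incidence count above, and then invoke Example~\ref{large-girth}, which states precisely that a sequence of $d$--regular graphs with $g(G_i)\to\infty$ Benjamini--Schramm converges to $\mathbb{T}_d$. That closes (a). For part (b), the argument is identical except that I must track two degree values: in a $2$-lift of an $(a,b)$--biregular bipartite graph, each copy of an $A$-vertex keeps degree $a$ and each copy of a $B$-vertex keeps degree $b$, so regularity of the biregular type is preserved, and the lift stays bipartite. Then the biregular half of Example~\ref{large-girth}, which identifies the Benjamini--Schramm limit of $(a,b)$--biregular graphs of growing girth as $\mathbb{T}_{a,b}$, finishes the claim.

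There is essentially no main obstacle here, since both ingredients are already established; the corollary is a bookkeeping assembly. The one subtlety worth stating explicitly is that Linial's lemma as phrased applies to an \emph{arbitrary} graph and says nothing about preserving regularity, so one must not merely cite it but also remark that the lift operation keeps the degree sequence fixed on each copy. Once that remark is in place, the convergence statements are direct applications of Example~\ref{large-girth}, and both parts follow at once.

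\begin{proof}
Apply the previous lemma (Linial) to $G$ to obtain a sequence $(G_i)_{i=0}^{\infty}$ of $2$-lifts with $G_0=G$ and $g(G_i)\to\infty$. For part (a), note that a $2$-lift preserves $d$--regularity: each vertex $v$ of $G_{i-1}$ gives two copies $(v,0),(v,1)$ in $G_i$, and for each of the $d$ edges at $v$ exactly one lift is incident to $(v,0)$ and exactly one to $(v,1)$, so both copies have degree $d$. Thus every $G_i$ is $d$--regular, and by Example~\ref{large-girth} a sequence of $d$--regular graphs with $g(G_i)\to\infty$ Benjamini--Schramm converges to $\mathbb{T}_d$.

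For part (b), the same incidence count shows that a $2$-lift of an $(a,b)$--biregular bipartite graph is again $(a,b)$--biregular: copies of $A$-vertices keep degree $a$ and copies of $B$-vertices keep degree $b$, and the lift remains bipartite. Hence every $G_i$ is $(a,b)$--biregular bipartite with $g(G_i)\to\infty$, and by the biregular case of Example~\ref{large-girth} the sequence $(G_i)$ Benjamini--Schramm converges to $\mathbb{T}_{a,b}$.
\end{proof}
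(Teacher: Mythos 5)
Your proof is correct and follows exactly the route the paper intends (the paper leaves this corollary without an explicit proof): apply Linial's lemma, note that $2$-lifts preserve $d$--regularity, $(a,b)$--biregularity and bipartiteness, and invoke Example~\ref{large-girth} for the Benjamini--Schramm limit of large-girth (bi)regular graphs. Your explicit remark that Linial's lemma alone says nothing about degrees, so the degree-preservation of $2$-lifts must be checked separately, is the one nontrivial point and you handle it correctly.
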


\begin{proof}[Proof of Theorem~\ref{Schrijver} and \ref{Gurvits}] Let $0\leq p<1$. Choose a graph sequence $(G_i)_{i=0}^{\infty}$ such that $G_0=G$, $G_i$ is a $2$-lift of $G_{i-1}$ for $i\geq 1$, and $(G_i)$ is Benjamini--Schramm convergent to the $d$--regular infinite tree $\mathbb{T}_d$. Then by Corollary~\ref{mainlemmacor}
$$\lambda_{G_0}(p)\geq \lambda_{G_1}(p)\geq \lambda_{G_2}(p)\geq \dots $$
and 
$$\lim_{i\to \infty} \lambda_{G_i}(p)=\lambda_{\mathbb{T}_d}(p)$$
since $G_i$ converges to $\mathbb{T}_d$ (see Theorem~\ref{entropy}). Hence $\lambda_{G}(p)\geq \lambda_{\mathbb{T}_d}(p)$ for $0\leq p<1$.
Finally, for $p=1$ we have
$$\lambda_{G}(1)=\lim_{p\to 1} \lambda_{G}(p)\geq \lim_{p\to 1} \lambda_{\mathbb{T}_d}(p)=\lambda_{\mathbb{T}_d}(1).$$
Note that by part (e) of Proposition~\ref{asymp}, the inequality $\lambda_{G}(1)\geq \lambda_{\mathbb{T}_d}(1)$ is equivalent with
$$\frac{\ln \prm(G)}{v(G)}\geq \frac{1}{2}\ln \left( \frac{(d-1)^{d-1}}{d^{d-2}}\right)$$
which completes the proof of Theorem~\ref{Schrijver}.
\end{proof}

One can prove the following theorem the very same way.

\begin{Th} \label{entropy-biregular} For any $(a,b)$-biregular bipartite graph $G$ we have
$$\lambda_G(p)\geq \G_{a,b}(p)$$
for every $0\leq p\leq \min(\frac{a}{a+b},\frac{b}{a+b})$.
\end{Th}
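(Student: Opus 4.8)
The plan is to mirror exactly the proof structure used for Theorem~\ref{Schrijver} and \ref{Gurvits}, since the statement $\lambda_G(p)\geq \G_{a,b}(p)$ is the biregular analogue of $\lambda_G(p)\geq \G_d(p)=\lambda_{\mathbb{T}_d}(p)$. The three ingredients needed are: a monotonicity statement for the entropy function under $2$-lifts, a sequence of $2$-lifts Benjamini--Schramm converging to $\mathbb{T}_{a,b}$, and the identification $\lambda_{\mathbb{T}_{a,b}}(p)=\G_{a,b}(p)$. Crucially, all three are already available: Lemma~\ref{mainlemma} and Corollary~\ref{mainlemmacor} are stated for arbitrary \emph{bipartite} graphs $G$ (not just $d$--regular ones), so they apply verbatim to an $(a,b)$-biregular bipartite graph; part (b) of the Corollary following Linial's lemma gives a $2$-lift sequence converging to $\mathbb{T}_{a,b}$; and Theorem~\ref{tree2}(c) supplies $\lambda_{\mathbb{T}_{a,b}}(p)=\G_{a,b}(p)$.

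First I would fix $0\leq p<\min(\frac{a}{a+b},\frac{b}{a+b})$ and invoke the Corollary to Linial's lemma (part (b)) to obtain a sequence $(G_i)_{i=0}^\infty$ with $G_0=G$, each $G_i$ a $2$-lift of $G_{i-1}$, and $(G_i)$ Benjamini--Schramm convergent to $\mathbb{T}_{a,b}$. Since every $2$-lift of a bipartite graph is bipartite and biregularity is preserved by $2$-lifts, each $G_i$ is an $(a,b)$-biregular bipartite graph, so Corollary~\ref{mainlemmacor} applies at each stage to give the monotone decreasing chain
$$\lambda_{G_0}(p)\geq \lambda_{G_1}(p)\geq \lambda_{G_2}(p)\geq \dots$$
Next I would apply Theorem~\ref{entropy}(d), whose hypothesis $p^*(G_i)\geq p_0$ is met because all the $G_i$ share the same $p^*$ (entropy and $p^*$ are invariant under $2$-lifts in the relevant range, and in any case $p<\min(\frac{a}{a+b},\frac{b}{a+b})$ stays below the threshold), to conclude that $\lim_{i\to\infty}\lambda_{G_i}(p)=\lambda_{\mathbb{T}_{a,b}}(p)$. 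Combining the monotone chain with this limit yields $\lambda_G(p)\geq \lambda_{\mathbb{T}_{a,b}}(p)=\G_{a,b}(p)$ by Theorem~\ref{tree2}(c).

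Finally I would handle the boundary value $p=\min(\frac{a}{a+b},\frac{b}{a+b})$ by a continuity argument, exactly as in the regular case: writing $\lambda_G(p)=\lim_{p'\nearrow p}\lambda_G(p')\geq \lim_{p'\nearrow p}\G_{a,b}(p')=\G_{a,b}(p)$, using that $\lambda_G$ is defined at the endpoint as a limit and that $\G_{a,b}$ is continuous there. The only point demanding care—and the one I expect to be the main (though minor) obstacle—is verifying the convergence hypothesis of Theorem~\ref{entropy}(d) uniformly along the lift sequence, i.e.\ checking that the $p_0$ in that theorem can be taken just below $\min(\frac{a}{a+b},\frac{b}{a+b})$ for the whole family; this is where the restriction on the range of $p$ in the statement enters, and one must confirm that $p^*(G_i)$ stays bounded below by such a $p_0$. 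Everything else is a direct transcription of the proof of Theorem~\ref{Schrijver} and \ref{Gurvits}, which is why the paper can say it is proved "the very same way."
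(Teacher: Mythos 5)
Your proposal is correct and is essentially the paper's own argument: the paper proves Theorem~\ref{entropy-biregular} "the very same way" as Theorems~\ref{Schrijver} and \ref{Gurvits}, i.e.\ by combining Corollary~\ref{mainlemmacor} (valid for all bipartite graphs), part (b) of the corollary to Linial's lemma, Theorem~\ref{entropy}, and Theorem~\ref{tree2}(c), exactly as you do. Your one point of worry is also fine: every $(a,b)$-biregular bipartite graph has a matching saturating the smaller side (Hall's condition follows from counting edges), so $p^*(G_i)=\frac{2\min(a,b)}{a+b}$ for every graph in the lift sequence and the hypothesis of Theorem~\ref{entropy} holds with room to spare on the stated range of $p$.
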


With the same technique one can prove the following theorem.

\begin{Th} \label{integral-inequality} Let $G$ be a $d$--regular bipartite graph, and $t\geq 0$. Then
$$\int \frac{1}{2}\ln\left(1+tz^2\right)\, d\rho_G(z)\geq \int \frac{1}{2}\ln\left(1+tz^2\right)\, d\rho_{\mathbb{T}_d}(z).$$
\end{Th}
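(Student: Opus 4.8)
The plan is to mirror exactly the argument used for Theorem~\ref{Schrijver} and \ref{Gurvits}, but now transferring the inequality to the level of the integral $\int \frac{1}{2}\ln(1+tz^2)\, d\rho_G(z)$ rather than $\lambda_G(p)$. The crucial observation is that this integral equals $\frac{\ln M(G,t)}{v(G)}$, as established in the proof of Theorem~\ref{entropy}, since
$$\frac{\ln M(G,t)}{v(G)}=\int \frac{1}{2}\ln\left(1+tz^2\right)\, d\rho_G(z).$$
So the statement to be proved is simply
$$\frac{\ln M(G,t)}{v(G)}\geq \frac{\ln M(\mathbb{T}_d,t)}{v(\mathbb{T}_d)},$$
interpreted as a limit on the right, where $\frac{\ln M(\mathbb{T}_d,t)}{v(\mathbb{T}_d)}$ denotes $F(\mathbb{T}_d,t)=\lim_{i\to\infty}\frac{\ln M(G_i,t)}{v(G_i)}$.

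First I would invoke Nathan Linial's lemma and its corollary: for the given $d$--regular bipartite graph $G$ there exists a sequence $(G_i)_{i=0}^\infty$ with $G_0=G$, each $G_i$ a $2$-lift of $G_{i-1}$, which is Benjamini--Schramm convergent to $\mathbb{T}_d$. Next, the heart of the monotonicity is Lemma~\ref{mainlemma}, which gives for any $2$-lift $H$ of a bipartite graph that $M(G,t)^2\geq M(H,t)$ for every $t\geq 0$. Applying this along the sequence, taking logarithms and dividing by the vertex counts (noting $v(G_{i})=2v(G_{i-1})$, so the factor of two in $M(G_{i-1},t)^2$ is precisely absorbed), one obtains the monotone chain
$$\frac{\ln M(G_0,t)}{v(G_0)}\geq \frac{\ln M(G_1,t)}{v(G_1)}\geq \frac{\ln M(G_2,t)}{v(G_2)}\geq \dots.$$
Then by part (b) of Theorem~\ref{entropy}, the sequence $\frac{\ln M(G_i,t)}{v(G_i)}$ converges to $F(\mathbb{T}_d,t)=\int \frac{1}{2}\ln(1+tz^2)\, d\rho_{\mathbb{T}_d}(z)$, since $(G_i)$ converges to $\mathbb{T}_d$. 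Combining the monotonicity with the limit yields
$$\int \frac{1}{2}\ln\left(1+tz^2\right)\, d\rho_G(z)=\frac{\ln M(G,t)}{v(G)}\geq F(\mathbb{T}_d,t)=\int \frac{1}{2}\ln\left(1+tz^2\right)\, d\rho_{\mathbb{T}_d}(z),$$
which is exactly the desired inequality.

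The main point to verify carefully, though it is not really an obstacle, is the bookkeeping of vertex counts in the $2$-lift step: since $v(H)=2v(G)$, dividing $\ln M(H,t)\leq 2\ln M(G,t)$ by $v(H)=2v(G)$ gives $\frac{\ln M(H,t)}{v(H)}\leq \frac{\ln M(G,t)}{v(G)}$, so each $2$-lift can only decrease the normalized log-partition function. This is really just an application of Lemma~\ref{mainlemma} together with the identity $M(G\cup G,t)=M(G,t)^2$, so the entire argument is word-for-word the same as the proof of Theorem~\ref{Gurvits}, except that we work with the function $\frac{\ln M(G,t)}{v(G)}$ directly instead of passing through the Legendre-type transform to $\lambda_G(p)$. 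In effect this is the more primitive inequality from which, via part (g) of Proposition~\ref{asymp}, the entropy-function inequality $\lambda_G(p)\geq \G_d(p)$ was deduced, so no new difficulty arises.
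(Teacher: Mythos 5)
Your proposal is correct and follows essentially the same route as the paper: the identity $\frac{\ln M(G,t)}{v(G)}=\int \frac{1}{2}\ln(1+tz^2)\, d\rho_G(z)$, the monotone chain along a $2$-lift sequence converging to $\mathbb{T}_d$ via Lemma~\ref{mainlemma}, and passage to the limit (the paper cites the weak convergence of the matching measures from Theorem~\ref{wc}, which is the same fact underlying part (b) of Theorem~\ref{entropy} that you invoke). Your explicit bookkeeping of the vertex counts in the $2$-lift step is a correct elaboration of a point the paper leaves implicit.
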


\begin{proof} Note that
$$\frac{\ln M(G,t)}{v(G)}=\int \frac{1}{2}\ln\left(1+tz^2\right)\, d\rho_G(z).$$
Let us choose a graph sequence $(G_i)_{i=0}^{\infty}$ such that $G_0=G$, $G_i$ is a $2$-lift of $G_{i-1}$ for $i\geq 1$, and $(G_i)$ is Benjamini--Schramm convergent to the $d$--regular infinite tree $\mathbb{T}_d$. By Lemma~\ref{mainlemma} we have
$$\frac{\ln M(G_0,t)}{v(G_0)}\geq \frac{\ln M(G_1,t)}{v(G_1)}\geq \frac{\ln M(G_2,t)}{v(G_2)}\geq \dots$$
and by the weak convergence of the measures $\rho_{G_i}$ (see Theorem~\ref{wc}) we have
$$\lim_{i\to \infty}\frac{\ln M(G_i,t)}{v(G_i)}=\lim_{i\to \infty}\int \frac{1}{2}\ln\left(1+tz^2\right)\, d\rho_{G_i}(z)=\int \frac{1}{2}\ln\left(1+tz^2\right)\, d\rho_{\mathbb{T}_d}(z).$$
Hence
$$\int \frac{1}{2}\ln\left(1+tz^2\right)\, d\rho_G(z)\geq \int \frac{1}{2}\ln\left(1+tz^2\right)\, d\rho_{\mathbb{T}_d}(z).$$
\end{proof}

Next we prove Theorem~\ref{direct} which is a direct consequence of the previous theorem.

\begin{proof}[Proof of Theorem~\ref{direct}] We can assume that $0\leq p<1$, for $p=1$ the claim follows from continuity.
We have seen that for $t\geq 0$
$$\frac{\ln M(G,t)}{v(G)}=\int \frac{1}{2}\ln\left(1+tz^2\right)\, d\rho_G \geq \int \frac{1}{2}\ln\left(1+tz^2\right)\, d\rho_{\mathbb{T}_d}.$$
Note that by Theorem~\ref{tree} we have
$$\frac{1}{2}\int \ln\left(1+tz^2\right)\, d\rho_{\mathbb{T}_d}=\frac{1}{2}\ln S_d(t),$$
where 
$$S_d(t)=\frac{1}{\eta_t^2}\left(\frac{d-1}{d-\eta_t}\right)^{d-2},$$
where 
$$\eta_t=\frac{\sqrt{1+4(d-1)t}-1}{2(d-1)t}.$$
Hence
$$M(G,t)\geq S_d(t)^n$$
for all $t\geq 0$. Now let
$$t=t(\mathbb{T}_{d},p)=\frac{p(d-p)}{d^2(1-p)^2}.$$
Then
$$\eta_t=\frac{1-p}{1-p/d},$$
and
$$S_d(t)=\frac{\left(1-\frac{p}{d}\right)^d}{(1-p)^2}.$$
Hence
$$M\left(G,\frac{p(d-p)}{d^2(1-p)^2}\right)\geq \frac{1}{(1-p)^{2n}}\left(1-\frac{p}{d}\right)^n.$$
After multiplying by $(1-p)^{2n}$, we get the claim of the theorem.
\end{proof}

We end this section with another corollary of Theorem~\ref{integral-inequality}. The so-called \emph{matching energy} was introduced by I. Gutman and S. Wagner \cite{G-W}, it is defined as follows:
$$ME(G)=\sum_{z_i: \mu(G,z_i)=0}|z_i|,$$
where all zeros are counted with its multiplicity. With our notation this is nothing else than
$$ME(G)=v(G)\int |z|\, d\rho_G(z).$$
The following theorem shows that if we normalize the matching energy by dividing by the number of vertices then among $d$--regular bipartite graphs its "minimum" is attained at the infinite $d$--regular tree $\mathbb{T}_d$.

\begin{Cor} Let $G$ be a $d$--regular bipartite graph. Then
$$\int |z|\, d\rho_G(z)\geq \int |z|\, d\rho_{\mathbb{T}_d}(z).$$
\end{Cor}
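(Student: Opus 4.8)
The plan is to reduce the statement to Theorem~\ref{integral-inequality} by expressing the integrand $|z|$ as a nonnegative superposition of the functions $\tfrac{1}{2}\ln(1+tz^2)$, for which that theorem already supplies the desired comparison. Concretely, I would first establish the integral representation
$$|z|=\frac{1}{2\pi}\int_0^\infty \ln\left(1+tz^2\right)t^{-3/2}\, dt,$$
valid for every real $z$. Writing $x=z^2$ and substituting $u=tx$ reduces the verification to the single computation $\int_0^\infty \ln(1+u)u^{-3/2}\, du=2\pi$, which follows by one integration by parts (the boundary terms vanish at both ends, since $u^{-1/2}\ln(1+u)\to 0$ as $u\to 0$ and as $u\to\infty$) together with the elementary integral $\int_0^\infty \frac{u^{-1/2}}{1+u}\, du=\pi$.

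With this representation in hand, the second step is to interchange the order of integration. Since $\rho_G$ is a probability measure supported on the bounded interval $[-2\sqrt{d-1},2\sqrt{d-1}]$ by Theorem~\ref{Hei}, and the integrand $\ln(1+tz^2)t^{-3/2}$ is nonnegative, Tonelli's theorem applies and yields
$$\int |z|\, d\rho_G(z)=\frac{1}{\pi}\int_0^\infty\left(\int \frac{1}{2}\ln\left(1+tz^2\right)\, d\rho_G(z)\right)t^{-3/2}\, dt,$$
where the total expression equals $\int|z|\,d\rho_G(z)<\infty$, so no integrability issue arises. The identical identity holds with $\rho_G$ replaced by $\rho_{\mathbb{T}_d}$, whose support is the same bounded interval.

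The final step is to apply Theorem~\ref{integral-inequality} inside the $t$-integral. For each fixed $t\geq 0$ that theorem gives
$$\int \frac{1}{2}\ln\left(1+tz^2\right)\, d\rho_G(z)\geq \int \frac{1}{2}\ln\left(1+tz^2\right)\, d\rho_{\mathbb{T}_d}(z),$$
and since the weight $t^{-3/2}$ is nonnegative, integrating this pointwise inequality against $t^{-3/2}\, dt$ preserves it. Combining with the two representations from the previous paragraph gives $\int|z|\,d\rho_G(z)\geq\int|z|\,d\rho_{\mathbb{T}_d}(z)$, as required.

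I expect the only genuinely non-routine point to be locating the correct integral representation of $|z|$ --- that is, guessing the exponent in the weight $t^{-3/2}$ and pinning down the normalizing constant; everything after that is a mechanical application of Tonelli's theorem and Theorem~\ref{integral-inequality}. A secondary point worth a sentence is the justification of the interchange of integrals, but this is immediate from the nonnegativity of the integrand together with the boundedness of the supports of both matching measures.
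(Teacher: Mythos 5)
Your proof is correct and follows essentially the same route as the paper: the paper uses the representation $|z|=\frac{1}{\pi}\int_0^{\infty}t^{-2}\ln(1+t^2z^2)\,dt$, which is your identity after the substitution $t\mapsto t^2$, and then likewise interchanges the integrals and applies Theorem~\ref{integral-inequality} pointwise in $t$. Your normalizing constant and the Tonelli justification both check out.
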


\begin{proof} Note that for any $z$ we have
$$|z|=\frac{1}{\pi}\int_0^{\infty} \frac{1}{t^2}\ln(1+t^2z^2)\, dt.$$
Hence
$$\int |z|\, d\rho_G=\int \left(\frac{1}{\pi}\int_0^{\infty} \frac{1}{t^2}\ln(1+t^2z^2)\, dt\right)\, d\rho_G(z)=$$
$$=\frac{1}{\pi}\int_0^{\infty}\frac{1}{t^2}\left(\int \ln(1+t^2z^2)\, d\rho_G(z) \right) \, dt\geq $$
$$\geq \frac{1}{\pi}\int_0^{\infty}\frac{1}{t^2}\left(\int \ln(1+t^2z^2)\, d\rho_{\mathbb{T}_d}(z) \right) \, dt=\int |z|\, d\rho_{\mathbb{T}_d}.$$
Since we integrated a non-negative function, it was allowed to interchange the integrals.
\end{proof}

\begin{Rem} Note that
$$\int |z|\, d\rho_{\mathbb{T}_d}(z)=\frac{d}{\pi}\left(2\sqrt{d-1}-(d-2)\arctan\left(\frac{2}{d-2}\sqrt{d-1}\right)\right).$$
\end{Rem}

\section{Proof of the Lower Matching Conjecture} \label{finish}

In this section we prove Theorem~\ref{main}. Here the main tool is that the matching polynomial has only real zeros, this gives sufficient information about its coefficients so that together with our results on the entropy function we can finish the proof of Theorem~\ref{main}. 
Let us mention that the argument in this section is more or less standard, a survey on related methods and results can be found in \cite{pit}.

\begin{proof}[Proof of Theorem~\ref{main}] We can assume that $0\leq p<1$ since for $p=1$, the statement reduces to Schrijver's theorem. Let $t$ be chosen such a way that $p(G,t)=p=\frac{k}{n}$. Then
$$m_k(G)=\frac{m_k(G)t^k}{M(G,t)}\exp (v(G)\lambda_G(p)).$$
Let
$$a_j=\frac{m_j(G)t^j}{M(G,t)}.$$
Then the probability distribution $(a_0,a_1,\dots ,a_n)$ has mean $\mu=k$. By the Heilmann--Lieb theorem, $\sum a_jx^j$ has only real zeros.
Then it is known that it is a distribution of the number of successes in independent trials. Indeed, let
$$M(G,t)=\prod_{i=1}^n(1+\gamma_it),$$
where $\gamma_i=\lambda_i^2$ with our previous notation, and
$$p_j=\frac{\gamma_jt}{1+\gamma_jt}.$$
If $I_j$ is the indicator variable that takes the value $1$ with probability $p_j$ and $0$ with probability $1-p_j$, then
$$\mathbb{P}(I_1+\dots +I_n=j)=a_j.$$
The advantage of this observation is that there is a powerful inequality for such distributions, namely Hoeffding's inequality.

\begin{Th}[Hoeffding's inequality \cite{Hoe}] Let $S$ be a random variable with probability distribution of the number of successes in $n$ independent trials. Assume that $\mathbb{E}S=np$. Let $b$ and $c$ integers satisfying $b\leq np\leq c$. Then
$$\mathbb{P}(b\leq X\leq c)\geq \sum_{j=b}^c{n \choose j}p^j(1-p)^{n-j}.$$
\end{Th}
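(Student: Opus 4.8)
The plan is to read the statement as a comparison among Poisson binomial distributions sharing a common mean. Write $S=X_1+\dots+X_n$ with the $X_i$ independent, $X_i\sim\mathrm{Bernoulli}(p_i)$, and $\sum_i p_i=np$; this is the setting in which the paper applies the theorem, since $M(G,t)=\prod_i(1+\gamma_it)$ exhibits $a_j$ as such a distribution. The right-hand side is exactly $\mathbb{P}r(b\le B\le c)$ for $B\sim\mathrm{Bin}(n,p)$, i.e.\ the value of $\phi(\mathbf p):=\mathbb{P}r(b\le S\le c)$ at the homogeneous point $\mathbf p=(p,\dots,p)$. Thus the theorem asserts that, among all vectors $\mathbf p\in[0,1]^n$ with $\sum_i p_i=np$ fixed, the central-interval probability $\phi(\mathbf p)$ is \emph{minimized} at the homogeneous point. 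I would prove this by a symmetrization argument on the compact feasible polytope $\Delta=\{\mathbf p\in[0,1]^n:\sum_i p_i=np\}$, showing that $\phi$ is Schur-convex on $\Delta$; since $(p,\dots,p)$ is majorized by every element of $\Delta$, a Schur-convex $\phi$ attains its minimum there, which is the claimed inequality.

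The engine is a pairwise smoothing lemma. Freeze all coordinates but $p_1,p_2$, keep $s=p_1+p_2$ constant, and let $T=X_3+\dots+X_n$. Conditioning on $T$ and using that $X_1+X_2$ takes the values $0,1,2$ with probabilities $1-s+\pi,\ s-2\pi,\ \pi$ where $\pi=p_1p_2$, one sees that $\phi$ is affine in $\pi$ with
$$\frac{\partial\phi}{\partial\pi}=\mathbb{P}r(T=b-2)-\mathbb{P}r(T=b-1)-\mathbb{P}r(T=c-1)+\mathbb{P}r(T=c),$$
because this coefficient is the second finite difference of the box indicator $\mathbf 1_{[b,c]}$ paired against $\mathbb{P}r(T=\,\cdot\,)$. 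Replacing $(p_1,p_2)$ by $(\tfrac s2,\tfrac s2)$ is precisely the move that pushes $\pi$ up to its maximum $s^2/4$ at fixed $s$, so such an averaging transfer does not increase $\phi$ exactly when this derivative is $\le 0$, i.e.\ when
$$\mathbb{P}r(T=b-2)+\mathbb{P}r(T=c)\le\mathbb{P}r(T=b-1)+\mathbb{P}r(T=c-1).$$
Granting this for every pair at every intermediate configuration, repeated averaging of unequal coordinates drives $\mathbf p$ to $(p,\dots,p)$ while never increasing $\phi$; continuity of $\phi$ then yields $\phi(\mathbf p)\ge\phi(p,\dots,p)$, which is the theorem. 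Equivalently, the displayed monotonicity under transfers is the Schur-convexity condition.

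The main obstacle is establishing the last displayed inequality for $T$, and this is exactly where the central-interval hypothesis $b\le np\le c$ must be used. Here $T$ is itself a Poisson binomial on $n-2$ trials, so its generating function has only real roots; by Newton's inequalities its probability sequence is log-concave, hence unimodal, with mode near its mean $np-p_1-p_2$. For a wide interval ($c-b\ge 2$) one checks from $b\le np\le c$ that $b-1$ sits weakly left of the mode of $T$ and $c-1$ weakly right, so unimodality gives $\mathbb{P}r(T=b-2)\le\mathbb{P}r(T=b-1)$ and $\mathbb{P}r(T=c)\le\mathbb{P}r(T=c-1)$, whose sum is what we need. The delicate part is the boundary bookkeeping: locating the mode of $T$ relative to $b-1$ and $c-1$ precisely enough to cover the cases where these indices sit at the mode, and treating the degenerate ranges $c-b\le 1$ separately. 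In particular the point case $b=c=np$ used in the application merges the two comparisons into the single requirement that $\mathbb{P}r(T=\,\cdot\,)$ peak at its mode, i.e.\ that its second difference be nonpositive there (note log-concavity alone is insufficient, since a geometric sequence is log-concave yet midpoint-convex). I would isolate this as a self-contained lemma on second differences of a box indicator against a unimodal, mean-centered sequence, which is the technical heart of Hoeffding's original argument.
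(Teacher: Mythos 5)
The paper does not prove this statement at all: it is quoted verbatim as Hoeffding's classical theorem and cited to \cite{Hoe}, then used as a black box inside the proof of Theorem~\ref{main}. So there is no in-paper argument to compare against, and your proposal must stand on its own. It does not: the engine of your argument --- that $\phi(\mathbf p)=\mathbb{P}r(b\le S\le c)$ is Schur-convex on the polytope $\Delta$, equivalently that every pairwise averaging $(p_1,p_2)\mapsto(\tfrac s2,\tfrac s2)$ fails to increase $\phi$ --- is false. Your own computation correctly identifies the needed condition
$$\mathbb{P}r(T=b-2)+\mathbb{P}r(T=c)\le\mathbb{P}r(T=b-1)+\mathbb{P}r(T=c-1),$$
but this inequality genuinely fails at reachable configurations, not merely at awkward boundary cases. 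Take $n=13$, $p=3/13$, $b=c=np=3$, and $\mathbf p=(1,\,0.9,\,0.1,\dots,0.1)$ with eleven coordinates equal to $0.1$. Then $T\sim\mathrm{Bin}(11,0.1)$ and one computes $\mathbb{P}r(T=1)+\mathbb{P}r(T=3)\approx 0.3836+0.0710=0.4546$ versus $2\,\mathbb{P}r(T=2)\approx 0.4262$, so the second-difference coefficient is strictly positive and averaging the first two coordinates to $(0.95,0.95)$ \emph{increases} $\mathbb{P}r(S=3)$ (from $\approx 0.36646$ to $\approx 0.36653$). In particular $\phi$ is not Schur-convex, and no amount of ``boundary bookkeeping'' on the mode of $T$ can rescue the lemma as stated: the mode of $T$ sits near $np-s$, which for $s$ close to $2$ lies strictly left of $b-1$, and there the log-concave sequence $\mathbb{P}r(T=\cdot)$ is already in its convex decreasing tail.

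The theorem itself is of course true (the homogeneous point in the example still gives the smaller value $\approx 0.255$), but it is not reached by a monotone majorization path, so the proof needs a different organization. The repair closest to your setup is Hoeffding's own: since $\phi$ is affine in $\pi=p_ip_j$ for each pair at fixed $p_i+p_j$, a minimizer of $\phi$ on the compact set $\Delta$ must, for every pair, have $\pi$ at an endpoint of its feasible range --- either $p_i=p_j$ (maximal $\pi$) or one of $p_i,p_j$ in $\{0,1\}$ (minimal $\pi$). Hence the minimum is attained at a configuration where the $p_i$ take values in $\{0,x,1\}$ for a single $x$, and one then verifies the inequality directly for this two-parameter family using the hypothesis $b\le np\le c$ (this is where the central-interval condition actually enters, rather than through a local second-difference sign). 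Without this extremal-structure step, or some substitute for it, your argument has a genuine gap.
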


In the particular case when $np=k$, we get that
$$a_k\geq {n \choose k}p^k(1-p)^{n-k}=p_{\mu}$$
with our previous notation. 

Putting everything together we obtain that
$$m_k(G)=\frac{m_k(G)t^k}{M(G,t)}\exp (v(G)\lambda_G(p))\geq p_{\mu}\exp (2n\G_d(p)).$$
In the last step we used that $\lambda_G(p)\geq \G_d(p)$ by Theorem~\ref{equivalent}.
\end{proof}

\begin{proof}[Proof of Theorem~\ref{biregular}]
The proof is completely analogous to the previous one. We have to use the inequality $\lambda_G(p)\geq \G_{a,b}(p)$, see Theorem~\ref{entropy-biregular}.
\end{proof}

\bigskip

\noindent \textbf{Acknowledgment.} The author is very grateful to the following people for the helpful conversations: Mikl\'os Ab\'ert, P\'eter Frenkel, Leonid Gurvits, Tam\'as Hubai, G\'abor Kun and Brendan McKay.

\end{document}